\newtheorem{thm}{Theorem}[section]
\newtheorem{prop}[thm]{Proposition}
\newtheorem{cor}[thm]{Corollary}
\newtheorem{lem}[thm]{Lemma}
\theoremstyle{definition}
\newtheorem{rmk}[thm]{Remark}
\begin{document}

\title{Integral Bases and Invariant Vectors for Weil Representations}

\author{Shaul Zemel}

\maketitle

\section*{Introduction}

Let $D$ be a discriminant form, also known as finite quadratic module. Then there is a representation $\rho_{D}$ of $\operatorname{SL}_{2}(\mathbb{Z})$ (or sometimes its double cover), called the \emph{Weil representation} associated with $D$ on the space $\mathbb{C}[D]$. It is defined on the generators of that group by Equation \eqref{Weildef}. These representations are essentially the case of the finite group $D$ in the general theory of Weil representations, initiated in \cite{[We]}.

These Weil representations now form an important technical tool in the theory of modular forms. Indeed, when the discriminant form $D$ comes from an even lattice $L$, the most natural way to present the theta function of $L$ is as a vector-valued modular form of level 1 and the Weil representation associated with $D$. This allows for many modularity proofs to be reduced to the case of level 1, where the group is $\operatorname{SL}_{2}(\mathbb{Z})$ (or its double cover), with well-known generators and relations.

Given a discriminant form $D$, the subspace of $\mathbb{C}[D]$ that is invariant under $\rho_{D}$ is of particular importance. Indeed, it is the space of holomorphic modular forms (constants) of weight 0 with representation $\rho_{D}$. As one example of the role it plays, assume that $D$ is the discriminant form of a lattice $L$ of signature $(2,2)$. Then a weakly holomorphic modular form of weight 0 is determined by its singular part precisely up to this subspace. Thus knowing it resolves certain technical questions involving the corresponding theta lifts.  

We shall not give a comprehensive list of references involving these questions. The latter arose, for example, in \cite{[BZ]}, a reference which cites many papers discussing various forms of theta lifts. However, we do mention another result, illustrating the importance of Weil representations: The paper \cite{[NW]} shows that every irreducible representation of $\operatorname{SL}_{2}(\mathbb{Z})$ that factors through a congruence quotient is contained inside some Weil representation of this sort.

\smallskip

A seemingly unrelated question involving Weil representations is the following one. Equation \eqref{Weildef} easily implies that in the natural basis for $\mathbb{C}[D]$, the Weil representation $\rho_{D}$ is defined over an explicit cyclotomic number field. However, the coefficients include denominators. In the 33rd Automorphic Forms Workshop, L. Candelori presented the question of finding one may ask whether there is a basis for $\mathbb{C}[D]$ in which the coefficients of the Weil representation are algebraic \emph{integers}, and presented some initial results in the case of discriminant forms of prime order. These, in fact, turn out to be the \emph{hardest} part of this question.

The main result of this paper (Theorem \ref{fin} below) solves this question entirely. Moreover, large parts of a general representation $\rho_{D}$ consists of subspaces admitting bases on which the action of every element on every basis element yields another basis element multiplied by a root of unity. By completing this with the result about prime discriminants (and some simple cases over the prime 2), the result for a general discriminant form follows.

\smallskip

For explaining the basic idea, note that the inversion element $S\in\operatorname{SL}_{2}(\mathbb{Z})$ takes, under $\rho_{D}$, a natural basis element $\mathfrak{e}_{\gamma}$ of $\mathbb{C}[D]$ (with $\gamma \in D$) to a certain sum over all the $\mathfrak{e}_{\delta}$'s, with coefficients that are based on $\gamma$. By taking a subgroup $H$ of $D$, one defines in Equation \eqref{vecH} intermediate vectors between these two types of vectors. For the correct choice of $H$, these vectors produce bases with the properties that we seek.

As an example, assume that $H$ is a self-dual isotropic subgroup of $D$. In this case Theorem \ref{genact} shows that any element of $\operatorname{SL}_{2}(\mathbb{Z})$ takes such a vector associated with $H$ to another such vector associated with $H$, times an explicit root of unity. Moreover, the action on the indices of these vectors is just the action of $\operatorname{SL}_{2}(\mathbb{Z})$ on vectors of length 2 in $D$.

Assume now that $H$ is isotropic, yielding a quotient of exponent $p$, and let $J \subseteq H$ be of size $p$, with quotient $B$ (of size $|D|/p^{2}$). In this case $\mathbb{C}[B]$ embeds into $\mathbb{C}[D]$ as a sub-representation via the arrow operator from \cite{[Br]} and others, and Theorem \ref{arrowJperp} shows (via Remark \ref{Jperpiso}) that its orthogonal complement inside $\mathbb{C}[D]$ also admits a basis with a similar action (but up to some 8th root of unity that are harder to determine). This reduces the question to anisotropic discriminant forms, whose $p$-parts are given in, e.g., \cite{[Zh]}, and solving these cases yields the final result. We remark that in the prime discriminant case we do not obtain the formula for a general element in this basis.

Finally, the fact that in large enough parts of the representations we have a closed formula for the action of a general element allows one to use the classical formula of Frobenius to determine the dimension of the space of invariants in $\mathbb{C}[D]$ discussed above. Note that some properties of this space are known in general: In relation to our integrality question, the result from \cite{[ES]} proves that $\mathbb{C}[D]^{\mathrm{inv}}$ is defined over $\mathbb{Z}$. Moreover, any self-dual isotropic subgroup yields a 1-dimensional subspace of $\mathbb{C}[D]$, and a theorem of \cite{[NRS]} (also proven in \cite{[Bi]}) shows that when such subgroups exist, these spaces generate $\mathbb{C}[D]^{\mathrm{inv}}$. While determining this dimension in this method might still present difficulties in general, we obtain the formula for $\dim\mathbb{C}[D]^{\mathrm{inv}}$ for generalized hyperbolic planes in Theorem \ref{hypplane}, and for discriminants of prime level in Theorem \ref{Fpvs}.

\smallskip

I am grateful to L. Candelori for presenting this question at the AFW, as well as for commenting some early drafts. Special thanks are due to Y. Wang, for observing in these drafts the pattern lying behind the result of Theorem \ref{genact}. I thank J. Bruinier, N. Scheithauer, S. Ehlen, and P. Kiefer for discussions on this subject, and in particular to P. Bieker for sharing the content of \cite{[Bi]} with me. The idea of E. de-Shalit of using the decomposition from Lemma \ref{Vdm} is also gratefully acknowledged. Finally, I would like to express my gratitude to the hospitality of Duquesne university for the 33rd AFW, and to the organizers there.

\smallskip

The paper is divided into 5 sections. Section \ref{Subgp} introduces the Weil representations and the vectors that we later use for our bases. Section \ref{TrivQuot} proves the formula for self-dual isotropic and quasi-isotropic subgroups, and Section \ref{TrivQuot} establishes the result involving quotients of prime level. Then Section \ref{GenRes} produces the construction of integral bases in general, and finally Section \ref{InvVect} shows how to apply these formulae to the determination of the dimension of the space of invariant vectors.

\section{Weil Representations and Subgroups \label{Subgp}}

The group $\operatorname{SL}_{2}(\mathbb{Z})$ is known to be generated by the elements $T=\big(\begin{smallmatrix} 1 & 1 \\ 0 & 1\end{smallmatrix}\big)$ and $S=\big(\begin{smallmatrix} 0 & -1 \\ 1 & 0\end{smallmatrix}\big)$, whose only relations are $S^{2}=(ST)^{3}=Z$ and $Z^{2}=I$ (the matrix $Z$ is minus the identity matrix). It admits a non-trivial double cover, denoted by $\operatorname{Mp}_{2}(\mathbb{Z})$, which is generated by appropriate lifts of $T$ and $S$, satisfying the first relation but in which $Z$ now has order 4 (and $Z^{2}$ generates the kernel of the projection onto $\operatorname{SL}_{2}(\mathbb{Z})$, of order 2).

Let $D$ be a \emph{discriminant form}, also called a \emph{finite quadratic module}, namely a finite group with a $\mathbb{Q}/\mathbb{Z}$-valued quadratic form, which we write as $\gamma\mapsto\frac{\gamma^{2}}{2}$. It induces the $\mathbb{Q}/\mathbb{Z}$-valued bilinear form $(\gamma,\delta)=\frac{(\gamma+\delta)^{2}}{2}-\frac{\gamma^{2}}{2}-\frac{\delta^{2}}{2}$, which we assume to be non-degenerate. This identified $D$ with its $\mathbb{Q}/\mathbb{Z}$-dual, and there is a signature $\operatorname{sgn}D\in\mathbb{Z}/8\mathbb{Z}$ which can be defined using \emph{Milgram's formula}
\begin{equation}
\sum_{\gamma \in D}\mathbf{e}\big(\tfrac{\gamma^{2}}{2}\big)=\mathbf{e}(\operatorname{sgn}D/8)\cdot\sqrt{|D|}. \label{Milgram}
\end{equation}

To $D$ we associate the \emph{Weil representation} $\rho_{D}$ of $\operatorname{Mp}_{2}(\mathbb{Z})$ onto the space $\mathbb{C}[D]$, with the natural basis $\{\mathfrak{e}_{\gamma}\}_{\gamma \in D}$, via the formulae
\begin{equation}
\rho_{D}(T)\mathfrak{e}_{\gamma}=\mathbf{e}\big(\tfrac{\gamma^{2}}{2}\big)\mathfrak{e}_{\gamma}\qquad\mathrm{and}\qquad\rho_{D}(S)\mathfrak{e}_{\gamma}=\tfrac{\mathbf{e}(-\operatorname{sgn}D/8)}{\sqrt{|D|}}\sum_{\delta \in D}\mathbf{e}\big(-(\gamma,\delta)\big)\mathfrak{e}_{\delta}, \label{Weildef}
\end{equation}
where $\mathbf{e}(x)$ stands for $e^{2\pi ix}$. The fact that it is a representation can be proved using Milgram's formula, and it factors through a representation of $\operatorname{SL}_{2}(\mathbb{Z})$ if and only if $\operatorname{sgn}D$ is even (otherwise $Z^{2}$ acts as $-1$). We endow $\mathbb{C}[D]$ with the inner product in which the basis $\{\mathfrak{e}_{\gamma}\}_{\gamma \in D}$ is orthonormal, and then $\rho_{D}$ becomes a unitary representation.

\smallskip

Given any subgroup $H$ of $D$, the subgroup \[H^{\perp}=\big\{\gamma \in D\big|\;(\gamma,\delta)=0\;\forall\delta \in H\big\}\] has index $|H|$ in $D$, and the $\mathbb{Q}/\mathbb{Z}$-dual of $H$ is $D/H^{\perp}$. For such $H$ and elements $\eta$ and $\lambda$ in $D$ we define
\begin{equation}
\mathfrak{a}_{\eta,\lambda}^{H}:=\frac{1}{\sqrt{|H|}}\sum_{\gamma \in H}\mathbf{e}\big((\gamma,\eta)\big)\mathfrak{e}_{\lambda+\gamma}. \label{vecH}
\end{equation}
Their first properties are given in the following lemma.
\begin{lem}
The vector from Equation \eqref{vecH} depends only on the image of $\eta$ modulo $H^{\perp}$, and adding $\delta \in H$ to $\lambda$ multiplies $\mathfrak{a}_{\eta,\lambda}^{H}$ by $\mathbf{e}\big(-(\delta,\eta)\big)$. Choosing a set $\mathfrak{R}$ of representatives for $D/H$ in $D$, the set $\{\mathfrak{a}_{\eta,\lambda}^{H}\}_{\eta \in D/H^{\perp},\lambda\in\mathfrak{R}}$ is an orthonormal basis for $\mathbb{C}[D]$. \label{ortho}
\end{lem}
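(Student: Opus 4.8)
The plan is to verify the two transformation properties by direct manipulation of the defining sum \eqref{vecH}, and then to deduce the basis statement from orthonormality together with a cardinality count.

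For the first property, replacing $\eta$ by $\eta+\mu$ with $\mu\in H^{\perp}$ multiplies the coefficient of each $\mathfrak{e}_{\lambda+\gamma}$ by $\mathbf{e}\big((\gamma,\mu)\big)$, which equals $1$ because $\gamma\in H$ and $\mu\in H^{\perp}$; hence $\mathfrak{a}_{\eta,\lambda}^{H}$ is unchanged, so it genuinely depends only on $\eta+H^{\perp}$, and in particular the indexing set $D/H^{\perp}$ in the last assertion makes sense. For the second property I would, in $\mathfrak{a}_{\eta,\lambda+\delta}^{H}$ with $\delta\in H$, substitute $\gamma\mapsto\gamma-\delta$, a bijection of $H$; this turns $\mathfrak{e}_{\lambda+\delta+\gamma}$ into $\mathfrak{e}_{\lambda+\gamma}$ and pulls out the scalar $\mathbf{e}\big(-(\delta,\eta)\big)$, giving the claimed identity. (This is consistent with the first property: since $\delta\in H$, the scalar $\mathbf{e}\big(-(\delta,\eta)\big)$ depends only on $\eta+H^{\perp}$.)

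For the orthonormal basis claim, first count: by the index statement recalled just before the lemma $|D/H^{\perp}|=|H|$, while $|\mathfrak{R}|=|D/H|=|D|/|H|$, so the indexed family has exactly $|D|=\dim_{\mathbb{C}}\mathbb{C}[D]$ members, and it therefore suffices to prove orthonormality. Expanding $\langle\mathfrak{a}_{\eta,\lambda}^{H},\mathfrak{a}_{\eta',\lambda'}^{H}\rangle$ using that $\{\mathfrak{e}_{\gamma}\}$ is orthonormal, the only surviving terms are those with $\lambda+\gamma=\lambda'+\gamma'$ for some $\gamma,\gamma'\in H$; since $\lambda$ and $\lambda'$ lie in the fixed set $\mathfrak{R}$ of coset representatives this forces $\lambda=\lambda'$, and then $\gamma=\gamma'$. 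Hence the inner product vanishes unless $\lambda=\lambda'$, in which case it equals $\frac{1}{|H|}\sum_{\gamma\in H}\mathbf{e}\big((\gamma,\eta-\eta')\big)$.

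The remaining point is to evaluate this character sum. The map $\gamma\mapsto\mathbf{e}\big((\gamma,\eta-\eta')\big)$ is a homomorphism from $H$ to $\mathbb{C}^{\times}$, so by orthogonality of characters the sum is $|H|$ if this character is trivial and $0$ otherwise, and it is trivial exactly when $(\gamma,\eta-\eta')=0$ in $\mathbb{Q}/\mathbb{Z}$ for all $\gamma\in H$, i.e.\ when $\eta-\eta'\in H^{\perp}$. It is precisely here that non-degeneracy of the bilinear form is used, via the identification of the $\mathbb{Q}/\mathbb{Z}$-dual of $H$ with $D/H^{\perp}$ recalled before the lemma; everything else is bookkeeping. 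Thus $\langle\mathfrak{a}_{\eta,\lambda}^{H},\mathfrak{a}_{\eta',\lambda'}^{H}\rangle$ is $1$ when $\lambda=\lambda'$ and $\eta\equiv\eta'\pmod{H^{\perp}}$ and $0$ otherwise, which is orthonormality of the family, and combined with the cardinality count this yields the basis assertion. I do not expect a real obstacle here; the only non-formal ingredient is the use of non-degeneracy just described.
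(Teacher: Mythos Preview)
Your proof is correct and follows essentially the same approach as the paper: verify the two transformation rules by inspection of the defining sum (the paper phrases these as ``the dependence on $\eta$ is by pairings with $H$'' and ``a summation index change''), then compute the inner product as vanishing for $\lambda\neq\lambda'$ (disjoint supports) and as the character sum $\frac{1}{|H|}\sum_{\gamma\in H}\mathbf{e}\big((\gamma,\eta-\eta')\big)$ otherwise. Your version is simply more explicit, in particular by spelling out the cardinality count $|D/H^{\perp}|\cdot|\mathfrak{R}|=|D|$ that the paper leaves implicit.
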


\begin{proof}
The dependence on $\eta$ in Equation \eqref{vecH} is by pairings with $H$, and adding $\delta \in H$ to $\lambda$ results in a summation index change. The pairing of $\mathfrak{a}_{\eta,\lambda}^{H}$ with $\mathfrak{a}_{\kappa,\nu}^{H}$ for $\nu\neq\lambda\in\mathfrak{R}$ (i.e., $\nu\not\in\lambda+H$) is based on disjoint subsets of $\{\mathfrak{e}_{\gamma}\}_{\gamma}$, and if $\nu=\lambda$ the pairing is $\frac{1}{|H|}\sum_{\gamma \in H}\mathbf{e}\big((\gamma,\eta-\kappa)\big)$. Since this equals 1 when $\kappa\in\eta+H^{\perp}$ and vanishes otherwise, the orthonormality follows. This proves the lemma.
\end{proof}
In particular, if $\eta \in H^{\perp}$ then $\mathfrak{a}_{\eta,\lambda}^{H}$ is invariant under replacing $\lambda$ by $\lambda+\delta$ with $\delta \in H$.
\begin{rmk}
Each basis vector $\mathfrak{e}_{\gamma}$ can be written as $\mathfrak{a}_{\eta,\gamma}^{\{0\}}$ via Equation \eqref{vecH}, where the independence of $\eta \in D$ and the orthonormality are generalized in Lemma \ref{ortho}. Moreover, we can rewrite Equation \eqref{Weildef} as stating that $\rho_{D}(T)$ and $\rho_{D}(S)$ take $\mathfrak{e}_{\gamma}=\mathfrak{a}_{\eta,\gamma}^{\{0\}}$ to $\mathbf{e}\big(\frac{\lambda^{2}}{2}\big)\mathfrak{a}_{\eta+\gamma,\gamma}^{\{0\}}$ and $\mathbf{e}(-\operatorname{sgn}D/8)\mathfrak{a}_{-\gamma,0}^{D}$ respectively. We soon generalize these formulae in Lemma \ref{rhoDaH} below. \label{ecana0}
\end{rmk}

\smallskip

A subgroup $H$ of $D$ is called \emph{quasi-isotropic} if $(\gamma,\delta)=0$ for every $\gamma$ and $\delta$ in $H$, namely, if $H \subseteq H^{\perp}$, and \emph{isotropic} when $\frac{\gamma^{2}}{2}=0$ for every $\gamma \in H$. Any isotropic subgroup is quasi-isotropic, and the isotropic condition is equivalent to the quotient $A:=H^{\perp}/H$ inheriting from $D$ a natural structure of a discriminant form. This discriminant form is non-degenerate, and appropriately gathering elements in Milgram's formula from Equation \eqref{Milgram} shows that it has the same signature as $D$. The following lemma is an immediate consequence of the definition.
\begin{lem}
The subgroup $H$ is quasi-isotropic if and only if $\gamma\mapsto\frac{\gamma^{2}}{2}$ is linear on $H$, namely there is $\xi_{H} \in D$ such that $\frac{\gamma^{2}}{2}=(\gamma,\xi_{H})$ for every $\gamma \in H$. The vector $\xi_{H}$ is unique modulo $H^{\perp}$, and its image in $D/H^{\perp}$ is trivial if $H$ is isotropic and has order 2 otherwise. \label{xiHdef}
\end{lem}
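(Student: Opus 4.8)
The plan is to treat the lemma as three separate and essentially formal assertions: the equivalence between quasi-isotropy of $H$ and linearity of $\gamma\mapsto\frac{\gamma^{2}}{2}$ on $H$ (together with the existence of $\xi_{H}$), the uniqueness of $\xi_{H}$ modulo $H^{\perp}$, and the computation of the order of its image in $D/H^{\perp}$.

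For the first part I would simply restrict the defining relation $(\gamma,\delta)=\frac{(\gamma+\delta)^{2}}{2}-\frac{\gamma^{2}}{2}-\frac{\delta^{2}}{2}$ to $\gamma,\delta\in H$: it shows that $(\gamma,\delta)$ vanishes identically on $H\times H$ --- i.e.\ $H$ is quasi-isotropic --- exactly when $\gamma\mapsto\frac{\gamma^{2}}{2}$ is additive on $H$, that is, a homomorphism $H\to\mathbb{Q}/\mathbb{Z}$. Since $\mathbb{Q}/\mathbb{Z}$ is divisible and hence injective as an abelian group, such a homomorphism extends to all of $D$; and because the bilinear form is non-degenerate, identifying $D$ with its $\mathbb{Q}/\mathbb{Z}$-dual, that extension has the form $\gamma\mapsto(\gamma,\xi_{H})$ for some $\xi_{H}\in D$. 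This yields $\frac{\gamma^{2}}{2}=(\gamma,\xi_{H})$ on $H$. The converse is immediate from bilinearity: if such a $\xi_{H}$ exists then $(\gamma,\delta)=(\gamma+\delta,\xi_{H})-(\gamma,\xi_{H})-(\delta,\xi_{H})=0$ for $\gamma,\delta\in H$.

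For uniqueness, if $\xi$ and $\xi'$ both satisfy $(\gamma,\xi)=(\gamma,\xi')=\frac{\gamma^{2}}{2}$ for all $\gamma\in H$, then $\xi-\xi'$ pairs trivially with every element of $H$ and so lies in $H^{\perp}$ (using symmetry of the form), while conversely any element of $H^{\perp}$ may be added back; hence $\xi_{H}$ is well-defined in $D/H^{\perp}$, and under the identification of $D/H^{\perp}$ with the dual of $H$ it corresponds precisely to the homomorphism $\gamma\mapsto\frac{\gamma^{2}}{2}$ on $H$. To pin down its order I would note that quasi-isotropy in particular forces $0=(\gamma,\gamma)=\frac{(2\gamma)^{2}}{2}-2\cdot\frac{\gamma^{2}}{2}=4\cdot\frac{\gamma^{2}}{2}-2\cdot\frac{\gamma^{2}}{2}=2\cdot\frac{\gamma^{2}}{2}$ for every $\gamma\in H$ (the middle equality because the quadratic form scales by squares); hence $2(\gamma,\xi_{H})=0$ on $H$, so $2\xi_{H}\in H^{\perp}$ and the image of $\xi_{H}$ in $D/H^{\perp}$ has order dividing $2$. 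This image is trivial exactly when $\frac{\gamma^{2}}{2}\equiv 0$ on $H$, i.e.\ exactly when $H$ is isotropic; otherwise it has order exactly $2$.

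No step here is a genuine obstacle --- the author is right that this is an immediate consequence of the definitions. The only two places deserving a line of care are the passage from a homomorphism defined only on $H$ to pairing with a single element $\xi_{H}$ of $D$ (divisibility/injectivity of $\mathbb{Q}/\mathbb{Z}$ together with non-degeneracy of the form), and the bookkeeping in $(\gamma,\gamma)=2\cdot\frac{\gamma^{2}}{2}$, which is exactly what converts the quasi-isotropy condition into the $2$-torsion statement about $\xi_{H}$.
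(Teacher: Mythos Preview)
Your proof is correct and matches the paper's approach; the paper states the lemma as ``an immediate consequence of the definition'' without a written proof, and the surrounding text confirms the same reasoning (linearity of the quadratic form is equivalent to vanishing of the bilinear form, with values then in $\frac{1}{2}\mathbb{Z}/\mathbb{Z}$). One small simplification: since the paper has already recorded that the $\mathbb{Q}/\mathbb{Z}$-dual of $H$ is $D/H^{\perp}$, you can read off $\xi_{H}$ directly as the element of $D/H^{\perp}$ corresponding to the homomorphism $\gamma\mapsto\frac{\gamma^{2}}{2}$, bypassing the explicit appeal to injectivity of $\mathbb{Q}/\mathbb{Z}$.
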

We shall also use the following consequence of Lemma \ref{xiHdef}.
\begin{cor}
Every quasi-isotropic subgroup $H$ of $D$ contains a unique maximal isotropic subgroup $H_{0}$. It equals $H$ when $H$ is isotropic, and has index 2 there otherwise. The associated subgroup $H_{0}^{\perp}$ is the union of $H^{\perp}\cup(\xi_{H}+H^{\perp})$. \label{isoinqiso}
\end{cor}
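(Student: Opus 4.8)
The plan is to derive everything from Lemma \ref{xiHdef}. Suppose $H$ is quasi-isotropic but not isotropic, and let $\xi_H \in D$ be the vector with $\frac{\gamma^2}{2} = (\gamma, \xi_H)$ for all $\gamma \in H$, which Lemma \ref{xiHdef} tells us has order $2$ in $D/H^\perp$. First I would define $H_0 := \{\gamma \in H : (\gamma, \xi_H) = 0\}$, i.e.\ the kernel of the homomorphism $\gamma \mapsto (\gamma, \xi_H)$ from $H$ to $\mathbb{Q}/\mathbb{Z}$. Since $\frac{\gamma^2}{2} = (\gamma, \xi_H)$ on $H$, this $H_0$ is precisely the set of $\gamma \in H$ with $\frac{\gamma^2}{2} = 0$; because the quadratic form restricted to $H$ is linear (hence its vanishing locus is a subgroup), $H_0$ is a subgroup, and it is visibly isotropic. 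It is maximal among isotropic subgroups of $H$: any isotropic subgroup $K \subseteq H$ has $\frac{\gamma^2}{2} = 0$ for $\gamma \in K$, so $K \subseteq H_0$; this also gives uniqueness.

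Next I would pin down the index. If $H$ is isotropic then $\xi_H \in H^\perp$, the homomorphism $\gamma \mapsto (\gamma, \xi_H)$ on $H$ is identically zero, and $H_0 = H$. If $H$ is not isotropic, then the image of this homomorphism $H \to \mathbb{Q}/\mathbb{Z}$ is nontrivial; I claim it has exactly two elements. Indeed the values $\frac{\gamma^2}{2}$ for $\gamma \in H$ form a subgroup of $\mathbb{Q}/\mathbb{Z}$, and since $2\cdot\frac{\gamma^2}{2} = \frac{(2\gamma)^2}{2} - (\gamma,\gamma)\cdot(\dots)$ — more cleanly, $\frac{(2\gamma)^2}{2} = 4\cdot\frac{\gamma^2}{2}$, but also $2\gamma \in H$ so $(2\gamma, \xi_H) = 2(\gamma,\xi_H)$ and linearity forces $4\frac{\gamma^2}{2} = 2\cdot\frac{\gamma^2}{2}$, hence $2\cdot\frac{\gamma^2}{2} = 0$ — so every value is $2$-torsion, and the image, being a nontrivial $2$-torsion subgroup of $\mathbb{Q}/\mathbb{Z}$, is exactly $\{0, \tfrac12\}$. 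Therefore $[H : H_0] = 2$.

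Finally I would identify $H_0^\perp$. Since $H_0 \subseteq H$ we have $H^\perp \subseteq H_0^\perp$, and $[D : H_0^\perp] = |H_0| = |H|/2 = [D:H^\perp]/2$, so $[H_0^\perp : H^\perp] = 2$. It remains to show $\xi_H + H^\perp$ is the nontrivial coset, i.e.\ $\xi_H \in H_0^\perp$: for $\gamma \in H_0$ we have $(\gamma, \xi_H) = \frac{\gamma^2}{2} = 0$, which is exactly the statement $\xi_H \in H_0^\perp$. Since $\xi_H \notin H^\perp$ (as $H$ is not isotropic, by Lemma \ref{xiHdef}), the coset $\xi_H + H^\perp$ is nontrivial in $H_0^\perp/H^\perp$, and as that quotient has order $2$ we conclude $H_0^\perp = H^\perp \cup (\xi_H + H^\perp)$.

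The only place requiring genuine care is the claim that the image of $\gamma \mapsto \frac{\gamma^2}{2}$ on $H$ is $2$-torsion; the slick way is to note that $\mathbb{Q}/\mathbb{Z}$-valued quadratic forms satisfy $\frac{(n\gamma)^2}{2} = n^2 \frac{\gamma^2}{2}$, so linearity on $H$ (from Lemma \ref{xiHdef}) combined with applying this to $n = 2$ forces $4\frac{\gamma^2}{2} = 2\frac{\gamma^2}{2}$ for all $\gamma \in H$, whence $2\frac{\gamma^2}{2} = 0$. Everything else is bookkeeping with indices of perpendicular subgroups, for which the identity $[D:H^\perp] = |H|$ recorded before Lemma \ref{ortho} does all the work.
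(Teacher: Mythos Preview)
Your proof is correct and follows exactly the approach the paper indicates: the paper's entire argument is the one-line remark that ``$H_{0}$ is just the kernel of the pairing with $\xi_{H}$ from Lemma \ref{xiHdef}'', and you have simply filled in the details that the paper leaves implicit (maximality, the index computation, and the identification of $H_{0}^{\perp}$). One small efficiency: the $2$-torsion of the image of $\gamma\mapsto(\gamma,\xi_{H})$ on $H$ is already contained in Lemma \ref{xiHdef}, which asserts that $\xi_{H}$ has order dividing $2$ in $D/H^{\perp}$, i.e.\ $2\xi_{H}\in H^{\perp}$, so $2(\gamma,\xi_{H})=0$ for all $\gamma\in H$ directly.
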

Indeed, the subgroup $H_{0}$ from Corollary \ref{isoinqiso} is just the kernel of the pairing with $\xi_{H}$ from Lemma \ref{xiHdef}.

The fact that a $\mathbb{Q}/\mathbb{Z}$-valued quadratic form is linear if and only if the associated bilinear form vanishes, and thus the quadratic form takes values in $\frac{1}{2}\mathbb{Z}/\mathbb{Z}$, extends Lemma \ref{xiHdef} as follows.
\begin{lem}
If $H$ is a subgroup of $D$ and $l\in\mathbb{Z}$ is such that $(\gamma,\delta)\in\frac{1}{l}\mathbb{Z}/\mathbb{Z}$ for every $\gamma$ and $\delta$ in $H$ then there exists a vector $\xi_{l,H} \in D$, unique modulo $H^{\perp}$, such that $l\frac{\gamma^{2}}{2}=(\gamma,\xi_{l,H})$ for all $\gamma \in H$. \label{xilH}
\end{lem}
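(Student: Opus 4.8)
The plan is to reduce Lemma \ref{xilH} to Lemma \ref{xiHdef} by passing to a suitable quotient of $D$ where the hypothesis becomes a genuine isotropy (in fact, linearity) condition. The hypothesis says the bilinear form on $H$ takes values in $\frac{1}{l}\mathbb{Z}/\mathbb{Z}$; equivalently, $l\gamma$ pairs trivially with every element of $H$, so $lH \subseteq H^{\perp}$. Consider the map $q_{l}\colon H \to \mathbb{Q}/\mathbb{Z}$ given by $\gamma \mapsto l\frac{\gamma^{2}}{2}$. I would first check that $q_{l}$ is a homomorphism: its associated ``bilinear'' defect is $(\gamma,\delta)\mapsto l\bigl(\frac{(\gamma+\delta)^{2}}{2}-\frac{\gamma^{2}}{2}-\frac{\delta^{2}}{2}\bigr)=l(\gamma,\delta)$, which vanishes in $\mathbb{Q}/\mathbb{Z}$ precisely because $(\gamma,\delta)\in\frac{1}{l}\mathbb{Z}/\mathbb{Z}$ by assumption. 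Hence $q_{l}\in\operatorname{Hom}(H,\mathbb{Q}/\mathbb{Z})$.

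Next I would invoke the identification, recalled in the excerpt, of the $\mathbb{Q}/\mathbb{Z}$-dual of $H$ with $D/H^{\perp}$, which is supplied by the non-degenerate bilinear form on $D$: every homomorphism $H\to\mathbb{Q}/\mathbb{Z}$ is of the form $\gamma\mapsto(\gamma,\xi)$ for some $\xi\in D$, uniquely determined modulo $H^{\perp}$. Applying this to $q_{l}$ produces the desired vector $\xi_{l,H}\in D$ with $l\frac{\gamma^{2}}{2}=(\gamma,\xi_{l,H})$ for all $\gamma\in H$, unique modulo $H^{\perp}$. This is exactly the statement. Note that the case $l=1$ recovers the first assertion of Lemma \ref{xiHdef}, since then the hypothesis $(\gamma,\delta)\in\mathbb{Z}/\mathbb{Z}$ forces $H$ to be quasi-isotropic.

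The only genuine point requiring care — and the place I would expect a referee to look — is verifying that $q_{l}$ is well-defined and additive, i.e. that the quadratic form $\frac{\gamma^{2}}{2}$ on $D$, multiplied by $l$ and restricted to $H$, really is $\mathbb{Z}$-linear under the stated hypothesis. This is the ``fact'' the excerpt alludes to just before the statement: a $\mathbb{Q}/\mathbb{Z}$-valued quadratic form is linear exactly when its polarization vanishes. The subtlety is purely arithmetic in $\mathbb{Q}/\mathbb{Z}$: one must be slightly careful that $l(\gamma,\delta)=0$ in $\mathbb{Q}/\mathbb{Z}$ follows from $(\gamma,\delta)\in\frac{1}{l}\mathbb{Z}/\mathbb{Z}$, and that the identity $q_l(\gamma+\delta)=q_l(\gamma)+q_l(\delta)+l(\gamma,\delta)$ is the defining relation between the quadratic and bilinear forms scaled by $l$. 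Once this is in hand, the rest is an immediate appeal to the duality $\widehat{H}\cong D/H^{\perp}$, and there is no further obstacle; I would keep the written proof to two or three sentences, as the excerpt's phrasing (``extends Lemma \ref{xiHdef} as follows'') signals.
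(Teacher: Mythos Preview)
Your argument is correct and follows precisely the approach the paper intends: the sentence preceding the lemma (``a $\mathbb{Q}/\mathbb{Z}$-valued quadratic form is linear if and only if the associated bilinear form vanishes'') is the paper's entire proof, and you have spelled out exactly this---showing $\gamma\mapsto l\frac{\gamma^{2}}{2}$ is additive because $l(\gamma,\delta)=0$, then invoking the duality $\widehat{H}\cong D/H^{\perp}$. There is nothing to add or correct.
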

The vector $\xi_{l,H}$ from Lemma \ref{xilH} is closely related to the element denoted by $x_{c}$ in \cite{[Sch]}, \cite{[Str]}, and \cite{[Ze]}, with $c=l$.

\smallskip

Most of our calculations will be based on evaluating $\rho_{D}$ on the vectors from Equation \eqref{vecH}. On the generators we get the following result.
\begin{lem}
For every $\lambda$ and $\eta$ we have the equality \[\rho_{D}(S)\mathfrak{a}_{\eta,\lambda}^{H}=\mathbf{e}(-\operatorname{sgn}D/8)\mathbf{e}\big(-(\lambda,\eta)\big)\mathfrak{a}_{-\lambda,\eta}^{H^{\perp}}.\] If $l\in\mathbb{Z}$ is as in Lemma \ref{xilH} and $\xi_{l,H}$ is the resulting vector, then we get \[\rho_{D}(T^{l})\mathfrak{a}_{\eta,\lambda}^{H}=\mathbf{e}\big(l\tfrac{\lambda^{2}}{2}\big)\mathfrak{a}_{\eta+l\lambda+\xi_{l,H},\lambda}^{H}.\] \label{rhoDaH}
\end{lem}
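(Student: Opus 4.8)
The plan is to plug the definition \eqref{vecH} of $\mathfrak{a}^{H}_{\eta,\lambda}$ into the generator formulae \eqref{Weildef} and simplify, the only non-trivial input being the orthogonality of characters on the finite group $H$ that was already used in the proof of Lemma \ref{ortho}.

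For $S$, I would substitute and interchange the two summations to get
\[\rho_{D}(S)\mathfrak{a}_{\eta,\lambda}^{H}=\frac{\mathbf{e}(-\operatorname{sgn}D/8)}{\sqrt{|H|\,|D|}}\sum_{\delta\in D}\Big(\sum_{\gamma\in H}\mathbf{e}\big((\gamma,\eta-\delta)\big)\Big)\mathbf{e}\big(-(\lambda,\delta)\big)\mathfrak{e}_{\delta},\]
where I used $(\lambda+\gamma,\delta)=(\lambda,\delta)+(\gamma,\delta)$. The inner sum equals $|H|$ when $\delta\in\eta+H^{\perp}$ and vanishes otherwise, so writing $\delta=\eta+\mu$ with $\mu\in H^{\perp}$, factoring out $\mathbf{e}\big(-(\lambda,\eta)\big)$, and using $|H^{\perp}|=|D|/|H|$ to turn $|H|/\sqrt{|H|\,|D|}$ into $1/\sqrt{|H^{\perp}|}$, the surviving expression $\frac{1}{\sqrt{|H^{\perp}|}}\sum_{\mu\in H^{\perp}}\mathbf{e}\big(-(\lambda,\mu)\big)\mathfrak{e}_{\eta+\mu}$ is exactly $\mathfrak{a}_{-\lambda,\eta}^{H^{\perp}}$ by \eqref{vecH}. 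This is the first asserted identity.

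For $T^{l}$, I would use $\rho_{D}(T^{l})\mathfrak{e}_{\gamma}=\mathbf{e}\big(l\tfrac{\gamma^{2}}{2}\big)\mathfrak{e}_{\gamma}$ termwise in \eqref{vecH} and expand $\tfrac{(\lambda+\gamma)^{2}}{2}=\tfrac{\lambda^{2}}{2}+(\lambda,\gamma)+\tfrac{\gamma^{2}}{2}$. Multiplying by $l$ and applying Lemma \ref{xilH} (legitimate because every summation index $\gamma$ lies in $H$) replaces $l\tfrac{\gamma^{2}}{2}$ by $(\gamma,\xi_{l,H})$, so the scalar multiplying $\mathfrak{e}_{\lambda+\gamma}$ becomes $\mathbf{e}\big(l\tfrac{\lambda^{2}}{2}\big)\mathbf{e}\big((\gamma,\eta+l\lambda+\xi_{l,H})\big)$; pulling out $\mathbf{e}\big(l\tfrac{\lambda^{2}}{2}\big)$ and comparing with \eqref{vecH} identifies the sum as $\mathfrak{a}_{\eta+l\lambda+\xi_{l,H},\lambda}^{H}$. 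I would also record that this is well-posed: $\xi_{l,H}$ is only determined modulo $H^{\perp}$ by Lemma \ref{xilH}, but $\mathfrak{a}_{\eta+l\lambda+\xi_{l,H},\lambda}^{H}$ depends only on $\eta+l\lambda+\xi_{l,H}$ modulo $H^{\perp}$ by Lemma \ref{ortho}, so the right-hand side does not depend on the choice.

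There is no real obstacle here; both statements reduce to formal manipulation of finite exponential sums. The points deserving attention are the character-orthogonality step over $H$ (identical to the one in Lemma \ref{ortho}), the careful tracking of the normalization $1/\sqrt{|H|}$ against $|H^{\perp}|=|D|/|H|$ in the $S$-case, and the independence of the $T^{l}$-formula from the choice of $\xi_{l,H}$. As a consistency check, specializing to $H=\{0\}$ (so $H^{\perp}=D$) and $l=1$ recovers the formulae recorded in Remark \ref{ecana0}.
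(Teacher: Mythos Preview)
Your proof is correct and follows essentially the same route as the paper: both unfold the definitions from Equations \eqref{Weildef} and \eqref{vecH}, use character orthogonality over $H$ (together with $|H^{\perp}|=|D|/|H|$) to collapse the $S$-sum to $\eta+H^{\perp}$, and expand $l\tfrac{(\lambda+\gamma)^{2}}{2}$ via Lemma \ref{xilH} for the $T^{l}$-part. Your additional remarks on the independence of the choice of $\xi_{l,H}$ and the $H=\{0\}$ specialization are fine extras, not deviations.
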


\begin{proof}
Unfolding the definitions from Equations \eqref{Weildef} and \eqref{vecH} and recalling that $|H^{\perp}|=\frac{|D|}{|H|}$ expresses $\rho_{D}(S)\mathfrak{a}_{\eta,\lambda}^{H}$ as $\mathbf{e}(-\operatorname{sgn}D/8)\big/\sqrt{|H^{\perp}|}$ times \[\sum_{\gamma \in H}\sum_{\beta \in D}\frac{\mathbf{e}\big((\gamma,\eta)-(\lambda+\gamma,\beta)\big)}{|H|}\mathfrak{e}_{\beta}=\sum_{\beta \in D}\mathbf{e}\big(-(\lambda,\beta)\big)\delta_{\beta+H^{\perp},\eta+H^{\perp}}\mathfrak{e}_{\beta}.\] Then writing $\beta$ as $\eta+\sigma$ with $\sigma \in H^{\perp}$ and applying Equation \eqref{vecH} (with $H^{\perp}$) yields the first expression. For the second one, we the multiplier $\mathbf{e}\big(l\frac{(\lambda+\gamma)^{2}}{2}\big)$ in front of $\mathfrak{e}_{\lambda+\gamma}$ in $\rho_{D}(T^{l})\mathfrak{a}_{\eta,\lambda}^{H}$ expanded as in Equation \eqref{vecH} by $\mathbf{e}\big(l\frac{\lambda^{2}}{2}\big)$ times $\mathbf{e}\big((l\lambda+\xi_{l,H},\gamma)\big)$, and the result follows. This proves the lemma.
\end{proof}
Note that the two multipliers in the expression for $\rho_{D}(S)\mathfrak{a}_{\eta,\lambda}^{H}$ in Lemma \ref{rhoDaH} may depend on $\eta \in D$, but Lemma \ref{ortho} shows that their product is well-defined for $\eta \in D/H^{\perp}$. Moreover, the case $H=\{0\}$ of Lemma \ref{rhoDaH} reproduces the presentation from Remark \ref{ecana0}.
\begin{rmk}
Lemma \ref{rhoDaH} suggests that a better indexation for the vector $\mathfrak{a}_{\eta,\lambda}^{H}$ from Equation \eqref{vecH} is using the vector $v:=\binom{\eta}{\lambda} \in D^{2}=\mathbb{Z}^{2}\otimes_{\mathbb{Z}}D$, on which matrices in $\operatorname{SL}_{2}(\mathbb{Z})$ have a natural action. Then the first formula there reads $\rho_{D}(S)\mathfrak{a}_{v}^{H}=\mathbf{e}(-\operatorname{sgn}D/8)\mathbf{e}\big(-(\lambda,\eta)\big)\mathfrak{a}_{Sv}^{H^{\perp}}$, and when the vector $\xi_{l,H}$ from Lemma \ref{xilH} vanishes, the second one becomes $\rho_{D}(T^{l})\mathfrak{a}_{v}^{H}=\mathbf{e}\big(l\tfrac{\lambda^{2}}{2}\big)\mathfrak{a}_{T^{l}v}^{H}$. With a general $\xi_{l,H}$, we can set $T^{l}*v:=T^{l}v+\binom{\xi_{l,H}}{0}$, and get $\rho_{D}(T^{l})\mathfrak{a}_{v}^{H}=\mathbf{e}\big(l\tfrac{\lambda^{2}}{2}\big)\mathfrak{a}_{T^{l}*v}^{H}$. \label{actindvec}
\end{rmk}

The case of isotropic $H$ with $\lambda$ and $\eta$ in $H^{\perp}$ in Lemma \ref{rhoDaH} reproduces the following operator, from \cite{[Br]} and others, in which we denote again the discriminant form $H^{\perp}/H$ by $A$.
\begin{cor}
The map $\uparrow_{H}:\mathbb{C}[A]\to\mathbb{C}[D]$ that is defined by the linear extension of \[\uparrow_{H}\mathfrak{e}_{\mu}:=\tfrac{1}{\sqrt{|H|}}\sum_{\gamma \in H^{\perp},\;\gamma+H=\mu}\mathfrak{e}_{\gamma}\] is an isometric map of representations, embedding $\rho_{A}$ into $\rho_{D}$. \label{arrow}
\end{cor}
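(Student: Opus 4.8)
The plan is to reduce everything to the case $H$ isotropic with $\eta\in H^{\perp}$ of Lemma \ref{rhoDaH}, exactly as the sentence preceding the statement suggests. First I would observe that for a representative $\lambda\in H^{\perp}$ of $\mu\in A$ one has, directly from Equation \eqref{vecH} and $(\gamma,0)=0$, the identity $\mathfrak{a}_{0,\lambda}^{H}=\frac{1}{\sqrt{|H|}}\sum_{\gamma\in H}\mathfrak{e}_{\lambda+\gamma}=\uparrow_{H}\mathfrak{e}_{\mu}$; the observation following Lemma \ref{ortho} (the case $\eta\in H^{\perp}$) shows this does not depend on the chosen $\lambda$, so $\uparrow_{H}$ is well-defined. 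Choosing the set $\mathfrak{R}$ of representatives for $D/H$ so that it contains a set of representatives of $H^{\perp}/H$, Lemma \ref{ortho} exhibits $\{\uparrow_{H}\mathfrak{e}_{\mu}\}_{\mu\in A}$ as a subset of an orthonormal basis of $\mathbb{C}[D]$. Since $\{\mathfrak{e}_{\mu}\}_{\mu\in A}$ is an orthonormal basis of $\mathbb{C}[A]$, the linear map $\uparrow_{H}$ sends an orthonormal basis to an orthonormal set, hence is an isometric embedding of inner product spaces (in particular injective).

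It then remains to check that $\uparrow_{H}$ intertwines $\rho_{A}$ with $\rho_{D}$. As $H$ is isotropic, the text shows (via regrouping terms in Milgram's formula \eqref{Milgram}) that $\operatorname{sgn}A=\operatorname{sgn}D$, so both representations factor through the same quotient of $\operatorname{Mp}_{2}(\mathbb{Z})$ and it suffices to verify the intertwining on $T$ and on $S$. For $T$: isotropy makes $H$ quasi-isotropic, so Lemma \ref{xilH} applies with $l=1$, and since $\frac{\gamma^{2}}{2}=0$ on $H$ one may take $\xi_{1,H}=0$; Lemma \ref{rhoDaH} then gives $\rho_{D}(T)\mathfrak{a}_{0,\lambda}^{H}=\mathbf{e}\big(\tfrac{\lambda^{2}}{2}\big)\mathfrak{a}_{\lambda,\lambda}^{H}$. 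Because $\lambda\in H^{\perp}$ pairs trivially with $H$, we have $\mathfrak{a}_{\lambda,\lambda}^{H}=\mathfrak{a}_{0,\lambda}^{H}=\uparrow_{H}\mathfrak{e}_{\mu}$, and as the quadratic form on $A$ is the one descended from $D$, the scalar $\mathbf{e}\big(\tfrac{\lambda^{2}}{2}\big)$ is precisely the $T$-eigenvalue of $\mathfrak{e}_{\mu}$ under $\rho_{A}$; thus $\rho_{D}(T)\uparrow_{H}\mathfrak{e}_{\mu}=\uparrow_{H}\rho_{A}(T)\mathfrak{e}_{\mu}$.

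For $S$, Lemma \ref{rhoDaH} gives $\rho_{D}(S)\mathfrak{a}_{0,\lambda}^{H}=\mathbf{e}(-\operatorname{sgn}D/8)\,\mathfrak{a}_{-\lambda,0}^{H^{\perp}}$, and I would expand $\mathfrak{a}_{-\lambda,0}^{H^{\perp}}$ by Equation \eqref{vecH} and regroup the sum over $\gamma\in H^{\perp}$ according to the coset $\gamma+H\in A$. The key point is that for $\gamma,\lambda\in H^{\perp}$ the pairing $(\gamma,\lambda)$ depends only on $\gamma+H$ and equals the induced pairing of $A$, so the regrouped sum becomes $\frac{\sqrt{|H|}}{\sqrt{|H^{\perp}|}}\sum_{\nu\in A}\mathbf{e}\big(-(\mu,\nu)\big)\uparrow_{H}\mathfrak{e}_{\nu}$; using $|H^{\perp}|=|D|/|H|$ and $|A|=|D|/|H|^{2}$ the constant simplifies to $1/\sqrt{|A|}$, so comparison with Equation \eqref{Weildef} for $A$ identifies this with $\mathbf{e}(\operatorname{sgn}A/8)\uparrow_{H}\rho_{A}(S)\mathfrak{e}_{\mu}$. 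Since $\operatorname{sgn}A=\operatorname{sgn}D$, the factors $\mathbf{e}(-\operatorname{sgn}D/8)$ and $\mathbf{e}(\operatorname{sgn}A/8)$ cancel, giving $\rho_{D}(S)\uparrow_{H}\mathfrak{e}_{\mu}=\uparrow_{H}\rho_{A}(S)\mathfrak{e}_{\mu}$ and completing the proof.

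None of these steps is a genuine obstacle; the points needing care are the independence of the representative $\lambda$ in the definition of $\uparrow_{H}$, the bookkeeping of the normalizations $\sqrt{|H|}$, $\sqrt{|H^{\perp}|}$ and $\sqrt{|A|}$ in the $S$-computation, and attaching the two $\mathbf{e}(\pm\operatorname{sgn}/8)$ factors to the signatures of $D$ and of $A$ respectively, so that the equality $\operatorname{sgn}A=\operatorname{sgn}D$ makes them cancel.
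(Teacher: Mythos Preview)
Your proof is correct and follows essentially the same approach as the paper: identify $\uparrow_{H}\mathfrak{e}_{\mu}$ with $\mathfrak{a}_{\eta,\lambda}^{H}$ for $\eta,\lambda\in H^{\perp}$, invoke Lemma~\ref{ortho} for the isometry, and use Lemma~\ref{rhoDaH} together with $\operatorname{sgn}A=\operatorname{sgn}D$ for the intertwining. Your treatment is slightly more explicit (you fix $\eta=0$ and spell out the $T$ and $S$ cases and the normalization bookkeeping separately), but the paper's regrouping of $\mathfrak{a}_{-\lambda,\eta}^{H^{\perp}}$ as $\frac{1}{\sqrt{|A|}}\sum_{\tau}\mathfrak{a}_{-\lambda,\eta+\tau}^{H}$ is exactly the coset decomposition you carry out.
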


\begin{proof}
The vector $\uparrow_{H}\mathfrak{e}_{\mu}$ is, in the notation of Equation \eqref{vecH}, just $\mathfrak{a}_{\eta,\lambda}^{H}$ where $\eta$ and $\lambda$ in $H^{\perp}$ with $\lambda+H=\mu$. These vectors, for $\mu \in A$, are orthonormal vectors that are independent of the representatives (by Lemma \ref{ortho}). The result now follows from the formulae from Lemma \ref{rhoDaH}, together with the fact that $\operatorname{sgn}A=\operatorname{sgn}D$ and the vector $\mathfrak{a}_{-\lambda,\eta}^{H^{\perp}}$ can be viewed, as in Remark \ref{ecana0}, as $\frac{1}{\sqrt{|A|}}\sum_{\tau \in H^{\perp}/H}\mathfrak{a}_{-\lambda,\eta+\tau}^{H}=\frac{1}{\sqrt{|A|}}\sum_{\tau \in H^{\perp}/H}\uparrow_{H}\mathfrak{e}_{\tau}$. This proves the corollary.
\end{proof}

\begin{rmk}
Let $J$ be an isotropic subgroup of $D$, with the associated discriminant form $B:=J^{\perp}/J$, and take $\kappa$ and $\nu$ in $B$. It is then clear from Equation \eqref{vecH} and Corollary \ref{arrow} that for every subgroup $H$ of $D$ that contains $J$ we have $\uparrow_{J}\mathfrak{a}_{\kappa,\nu}^{H/J}=\mathfrak{a}_{\eta,\lambda}^{H}$ for $\eta$ and $\lambda$ in $J^{\perp}$ with respective $B$-image $\kappa$ and $\nu$. In particular this gives the relation $\uparrow_{H}=\uparrow_{J}\circ\uparrow_{H/J}$ of the operators from Corollary \ref{arrow}. \label{compsub}
\end{rmk}

\section{Discriminant Forms with Trivial Quotients \label{TrivQuot}}

A quasi-isotropic subgroup $H$ of $D$ is called \emph{self-dual} if $H^{\perp}=H$. Thus for a self-dual isotropic subgroup $H$, the associated discriminant form $A$ is trivial. Note that ``self-dual'' here does not mean that $H$ is identified with its dual, but rather that when $D$ is the discriminant of a lattice, $H$ corresponds to an over-lattice that is self-dual in the usual sense (\cite{[Bi]} uses the same term).

If $H$ is a self-dual isotropic subgroup of $D$, then the quotient $A=H^{\perp}/H$ is a trivial discriminant form. In this case we have the equality $\operatorname{sgn}D=\operatorname{sgn}A=0$, and $\rho_{D}$ is a representation of $\operatorname{SL}_{2}(\mathbb{Z})$. Then the simple formulae from Lemma \ref{rhoDaH} are enough for establishing a simple formulae for $\rho_{D}(M)$ for every $M\in\operatorname{SL}_{2}(\mathbb{Z})$ using the vectors from Equation \eqref{vecH}.

To do so, write $v:=\binom{\eta}{\lambda}$ for $\eta$ and $\lambda$ in $D$ as in Remark \ref{actindvec}, and for an element $M=\big(\begin{smallmatrix}a & b \\ c & d\end{smallmatrix}\big)\in\operatorname{SL}_{2}(\mathbb{Z})$ and such $v$ we set
\begin{equation}
Q(M,v):=ac\tfrac{\eta^{2}}{2}+bd\tfrac{\lambda^{2}}{2}+bc(\lambda,\eta) \label{QMv}
\end{equation}
(such expressions arise naturally in the general theory of Weil representations, by appropriate substitutions in the general expressions from \cite{[We]}). This expression has the following cocycle property.
\begin{lem}
For $M$ and $N$ in $\operatorname{SL}_{2}(\mathbb{Z})$ and $v \in D^{2}$, the expression from Equation \eqref{QMv} satisfies the equality $Q(MN,v)=Q(N,v)+Q(M,Nv)$ . \label{cocycle}
\end{lem}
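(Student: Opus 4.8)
The plan is to package the quadratic expression $Q(M,v)$ as the value of a $\mathbb{Z}$-bilinear form evaluated on the two rows of $M$; once this is set up, the cocycle identity becomes a short consequence of bilinearity together with $\det M=1$. Concretely, for $v=\binom{\eta}{\lambda}\in D^{2}$ and row vectors $r=(x,y),\,r'=(x',y')\in\mathbb{Z}^{2}$ I would set
\[
B_{v}(r,r'):=xx'\tfrac{\eta^{2}}{2}+yy'\tfrac{\lambda^{2}}{2}+x'y\,(\lambda,\eta),
\]
which is visibly $\mathbb{Z}$-linear in each of $r$ and $r'$, and which by Equation \eqref{QMv} satisfies $B_{v}\big((a,b),(c,d)\big)=Q(M,v)$ for $M=\big(\begin{smallmatrix}a&b\\c&d\end{smallmatrix}\big)$. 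Writing $r\cdot v:=x\eta+y\lambda\in D$, so that the two coordinates of $Mv$ are $r_{1}\cdot v$ and $r_{2}\cdot v$ with $r_{1},r_{2}$ the rows of $M$, I would then record two elementary facts about $B_{v}$ that come straight from the axioms of a discriminant form: first, $B_{v}(r,r)=\frac{(r\cdot v)^{2}}{2}$, which is just the expansion of $\frac{(x\eta+y\lambda)^{2}}{2}$; and second, $B_{v}(r,r')+B_{v}(r',r)=(r\cdot v,\,r'\cdot v)$, i.e. the symmetrization of $B_{v}$ recovers the bilinear form, using the relation between $\frac{\gamma^{2}}{2}$ and $(\cdot,\cdot)$.

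Granting these, the computation is short. Let $r_{1},r_{2}$ now denote the rows of $N$ and write $Nv=\binom{\eta_{N}}{\lambda_{N}}$, so $\eta_{N}=r_{1}\cdot v$ and $\lambda_{N}=r_{2}\cdot v$. Since the rows of $MN$ are $a r_{1}+b r_{2}$ and $c r_{1}+d r_{2}$, bilinearity gives
\[
Q(MN,v)=B_{v}(a r_{1}+b r_{2},\,c r_{1}+d r_{2})=ac\,B_{v}(r_{1},r_{1})+ad\,B_{v}(r_{1},r_{2})+bc\,B_{v}(r_{2},r_{1})+bd\,B_{v}(r_{2},r_{2}).
\]
The outer two terms equal $ac\frac{\eta_{N}^{2}}{2}+bd\frac{\lambda_{N}^{2}}{2}$ by the first fact; writing the middle two as $bc\big(B_{v}(r_{1},r_{2})+B_{v}(r_{2},r_{1})\big)+(ad-bc)B_{v}(r_{1},r_{2})$, the second fact together with $ad-bc=1$ turns them into $bc(\lambda_{N},\eta_{N})+B_{v}(r_{1},r_{2})$. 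Summing, $Q(MN,v)=\big(ac\frac{\eta_{N}^{2}}{2}+bd\frac{\lambda_{N}^{2}}{2}+bc(\lambda_{N},\eta_{N})\big)+B_{v}(r_{1},r_{2})$, where the parenthesized term is $Q(M,Nv)$ by applying Equation \eqref{QMv} to the pair $Nv$, and $B_{v}(r_{1},r_{2})=Q(N,v)$; this is the asserted equality.

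The one point that needs care, and the only place where $M\in\operatorname{SL}_{2}(\mathbb{Z})$ is used, is that $B_{v}$ is not symmetric: the product $MN$ produces $B_{v}(r_{1},r_{2})$ and $B_{v}(r_{2},r_{1})$ with the unequal coefficients $ad$ and $bc$, and it is precisely $ad-bc=1$ that lets one peel off one symmetric combination (absorbed back into the bilinear form via the second fact) and leave exactly one copy of $Q(N,v)$. Nothing is needed about $N$ beyond the fact that it acts $\mathbb{Z}$-linearly on $D^{2}$. If one prefers to avoid the auxiliary form, the same proof is just the brute-force expansion of both sides in $a,\dots,d$, the entries of $N$, and the three scalars $\frac{\eta^{2}}{2},\frac{\lambda^{2}}{2},(\lambda,\eta)$, with $ad-bc=1$ substituted at the end; I expect the author's proof to be one of these two essentially equivalent routes, and I would write it in the first form because it makes the role of $\det M=1$ transparent.
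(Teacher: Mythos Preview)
Your proof is correct, but it follows a different route from the paper's. The paper does not carry out the full bilinear expansion; instead it observes that the cocycle identity is multiplicative in $M$ (if it holds for $M_{1}$ and for $M_{2}$, each with arbitrary second argument, then it holds for $M_{1}M_{2}$), so it suffices to verify it for $M\in\{T,S\}$, which is done by two short explicit substitutions into Equation \eqref{QMv}. Your approach packages $Q(M,v)$ as $B_{v}(\text{row}_{1},\text{row}_{2})$ and handles all $M$ at once; this is more conceptual and, as you note, isolates exactly where $\det M=1$ enters, whereas the paper's argument trades the direct computation for reliance on the generation of $\operatorname{SL}_{2}(\mathbb{Z})$ by $T$ and $S$. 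One minor remark: your closing expectation that the paper uses either the bilinear-form route or brute-force expansion is off---the paper mentions the direct calculation as ``tedious'' and opts for the generator reduction instead.
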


\begin{proof}
A tedious calculation can verify this directly, but it suffices to carry out the verification when $M$ is one of the generators $T$ and $S$ of $\operatorname{SL}_{2}(\mathbb{Z})$. If $N=\big(\begin{smallmatrix}a & b \\ c & d\end{smallmatrix}\big)$ and $v=\binom{\eta}{\lambda}$ then substituting the values of $Q(T,Nv)$ and $Q(S,Nv)$ from Equation \eqref{QMv} yields the equalities \[Q(M,v)+\tfrac{(c\eta+d\lambda)^{2}}{2}=(a+c)c\tfrac{\eta^{2}}{2}+(b+d)d\tfrac{\lambda^{2}}{2}+(b+d)c(\lambda,\eta)=Q(TM,v)\] and
\[Q(M,v)-(a\eta+b\lambda,c\eta+d\lambda)=-ac\tfrac{\eta^{2}}{2}-bd\tfrac{\lambda^{2}}{2}-ad(\lambda,\eta)=Q(SM,v),\] as desired. This proves the lemma.
\end{proof}
We can now establish the following result.
\begin{thm}
Assume that $H \subseteq D$ is a self-dual isotropic subgroup. Then for every $M\in\operatorname{SL}_{2}(\mathbb{Z})$ and vector $v \in D^{2}$ the operator $\rho_{D}(M)$ sends the vector $\mathfrak{a}_{v}^{H}$ to $\mathbf{e}\big(Q(M,v)\big)\mathfrak{a}_{Mv}^{H}$. \label{genact}
\end{thm}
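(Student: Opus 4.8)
The plan is to reduce the statement to the generators $S$ and $T$ of $\operatorname{SL}_{2}(\mathbb{Z})$ — in fact $S$ and $T$ generate already as a monoid, since $S^{-1}=S^{3}$ and $T^{-1}=(ST)^{5}S$ are positive words — and then to propagate the formula along words using the cocycle relation of Lemma \ref{cocycle}. First I would record the simplifications forced by the hypotheses on $H$. Self-duality gives $H^{\perp}=H$, so every vector $\mathfrak{a}_{w}^{H^{\perp}}$ produced by Lemma \ref{rhoDaH} is again of the form $\mathfrak{a}_{w}^{H}$; isotropy gives $\tfrac{\gamma^{2}}{2}=0$ on $H$, so one may take $\xi_{1,H}=0$ in Lemma \ref{xilH}, which is precisely the condition under which the $T$-action of Remark \ref{actindvec} is the honest matrix action $v\mapsto Tv$; and, as recorded in the text preceding the statement, $\operatorname{sgn}D=\operatorname{sgn}(H^{\perp}/H)=0$, so the factor $\mathbf{e}(-\operatorname{sgn}D/8)$ in the $S$-formula equals $1$ and $\rho_{D}$ is a genuine $\operatorname{SL}_{2}(\mathbb{Z})$-representation.

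With these in place the base cases $M\in\{S,T\}$ are a direct comparison. For $v=\binom{\eta}{\lambda}$, Lemma \ref{rhoDaH} gives $\rho_{D}(S)\mathfrak{a}_{v}^{H}=\mathbf{e}\big(-(\lambda,\eta)\big)\mathfrak{a}_{-\lambda,\eta}^{H}$ and $\rho_{D}(T)\mathfrak{a}_{v}^{H}=\mathbf{e}\big(\tfrac{\lambda^{2}}{2}\big)\mathfrak{a}_{\eta+\lambda,\lambda}^{H}$; since $Sv=\binom{-\lambda}{\eta}$ and $Tv=\binom{\eta+\lambda}{\lambda}$, these read $\mathbf{e}\big(-(\lambda,\eta)\big)\mathfrak{a}_{Sv}^{H}$ and $\mathbf{e}\big(\tfrac{\lambda^{2}}{2}\big)\mathfrak{a}_{Tv}^{H}$. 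Plugging the entries of $S$ and $T$ into Equation \eqref{QMv} one checks $Q(S,v)=-(\lambda,\eta)$ and $Q(T,v)=\tfrac{\lambda^{2}}{2}$, so the claimed identity holds for $M=S$ and $M=T$ and every $v\in D^{2}$.

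For the inductive step, write $M=M_{0}N$ with $M_{0}\in\{S,T\}$ and $N$ a shorter word, and assume the identity for $N$ and all vectors. Fixing $v$, multiplicativity of $\rho_{D}$ gives $\rho_{D}(M)\mathfrak{a}_{v}^{H}=\rho_{D}(M_{0})\big(\mathbf{e}(Q(N,v))\mathfrak{a}_{Nv}^{H}\big)$; applying the base case for $M_{0}$ to the vector $Nv\in D^{2}$ turns this into $\mathbf{e}\big(Q(N,v)+Q(M_{0},Nv)\big)\mathfrak{a}_{M_{0}Nv}^{H}$, and Lemma \ref{cocycle} rewrites the exponent as $Q(M_{0}N,v)=Q(M,v)$ while $M_{0}Nv=Mv$. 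This closes the induction. Note that no well-definedness issue arises from the fact that distinct $v$ may give proportional vectors $\mathfrak{a}_{v}^{H}$: for each fixed $v$ both sides of the asserted identity are honest functions of the matrix $M$, and the argument only ever feeds the concrete vectors $v$, $Nv$, $M_{0}Nv\in D^{2}$ into the generator formulas.

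I do not expect a genuine obstacle: the representation-theoretic content has been isolated into Lemmas \ref{rhoDaH} and \ref{cocycle}, and what remains is bookkeeping. The point to watch is that the index $Mv$ really is the plain matrix action, which relies on isotropy of $H$ (so that the correction term $\binom{\xi_{1,H}}{0}$ of Remark \ref{actindvec} vanishes), together with the fact that each $S$-step preserves the superscript $H$, which uses self-duality $H^{\perp}=H$ in an essential way; dropping either hypothesis breaks the clean formula.
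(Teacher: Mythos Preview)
Your proof is correct and follows essentially the same approach as the paper: verify the formula on generators via Lemma \ref{rhoDaH} (using $H^{\perp}=H$, $\xi_{1,H}=0$, and $\operatorname{sgn}D=0$), then propagate to arbitrary words with the cocycle identity of Lemma \ref{cocycle}. The only cosmetic difference is that the paper takes $T$, $T^{-1}$, and $S$ as its generating set, whereas you work with $S$ and $T$ as monoid generators; both are fine.
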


\begin{proof}
The fact that the operation on the vectors is a group action combines with Lemma \ref{cocycle} to show that if the formula holds for two matrices $M$ and $N$ and every $v$ then it is valid for the product $MN$. But since $\operatorname{sgn}D=0$, Lemma \ref{rhoDaH} and Remark \ref{actindvec} produce the desired formula for $T$, $T^{-1}$, and $S$, which generate $\operatorname{SL}_{2}(\mathbb{Z})$ multiplicative, and for every vector $v$. This proves the theorem.
\end{proof}

\smallskip

The following well-known lemma allows us to extend Theorem \ref{genact} to the case of a self-dual quasi-isotropic subgroup $H$, which is not necessarily isotropic.
\begin{lem}
The elements $T^{2}$ and $S$ of the double cover $\operatorname{Mp}_{2}(\mathbb{Z})$ of $\operatorname{SL}_{2}(\mathbb{Z})$ generate a subgroup $\Gamma_{\mathrm{odd}}$ of index 3, which is the semi-direct product in which $\langle S \rangle$ acts by conjugation on the free group generated by $T^{2}$ and $ST^{2}S^{-1}$. The non-trivial cosets of $\Gamma_{\mathrm{odd}}$ in $\operatorname{Mp}_{2}(\mathbb{Z})$ are represented by $T$ and $ST$. \label{Gammaodd}
\end{lem}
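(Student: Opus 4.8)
The plan is to separate the claim into (i) the assertion that $\Gamma_{\mathrm{odd}}:=\langle T^{2},S\rangle$ has index $3$ in $\operatorname{Mp}_{2}(\mathbb{Z})$ with left cosets represented by $I$, $T$ and $ST$, and (ii) the internal description as a semi-direct product; each part is short given standard facts.

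For (i) I would use the reduction homomorphism $\pi\colon\operatorname{Mp}_{2}(\mathbb{Z})\to\operatorname{SL}_{2}(\mathbb{Z})\to\operatorname{SL}_{2}(\mathbb{F}_{2})\cong S_{3}$. Here $\pi(S)$ and $\pi(T)$ are distinct transpositions, $\pi(ST)$ is a $3$-cycle, and $\pi(T^{2})=I$, so $\pi(\Gamma_{\mathrm{odd}})=\langle\pi(S)\rangle$ has order $2$; in particular $T$, $ST$ and $T^{-1}ST$ are not in $\Gamma_{\mathrm{odd}}$, so the three left cosets $\Gamma_{\mathrm{odd}}$, $T\Gamma_{\mathrm{odd}}$, $ST\Gamma_{\mathrm{odd}}$ are pairwise distinct and the index is at least $3$. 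To see these cosets exhaust $\operatorname{Mp}_{2}(\mathbb{Z})$, I would check that the set of three cosets is stable under left multiplication by $S$, $S^{-1}$, $T$ and $T^{-1}$ --- using $S,T^{\pm2}\in\Gamma_{\mathrm{odd}}$, the centrality of $Z=S^{2}$, and the relation $(ST)^{3}=S^{2}$ (which gives $TST=ST^{-1}S^{-1}$) --- and then invoke that $S$ and $T$ generate and act transitively on the set of left cosets. Hence $[\operatorname{Mp}_{2}(\mathbb{Z}):\Gamma_{\mathrm{odd}}]=3$ with the asserted representatives.

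For (ii), set $F:=\langle T^{2},\,ST^{2}S^{-1}\rangle$. Conjugation by $S$ preserves $F$ and interchanges its two displayed generators, since $S(T^{2})S^{-1}=ST^{2}S^{-1}$ and $S(ST^{2}S^{-1})S^{-1}=S^{2}T^{2}S^{-2}=T^{2}$ as $Z=S^{2}$ is central; since $T^{2}\in F$ as well, $F$ is normal in $\Gamma_{\mathrm{odd}}=\langle T^{2},S\rangle$ and $\Gamma_{\mathrm{odd}}=F\cdot\langle S\rangle$. It therefore suffices to show that $F$ is free of rank $2$ on $T^{2},ST^{2}S^{-1}$ and that $F\cap\langle S\rangle=\{1\}$; granting this, $\Gamma_{\mathrm{odd}}=F\rtimes\langle S\rangle$ with the conjugation action described in the statement.

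Both remaining points I would extract from one classical input: under $\operatorname{Mp}_{2}(\mathbb{Z})\to\operatorname{SL}_{2}(\mathbb{Z})$ the elements $T^{2}$ and $ST^{2}S^{-1}$ map to $\big(\begin{smallmatrix}1 & 2 \\ 0 & 1\end{smallmatrix}\big)$ and $\big(\begin{smallmatrix}1 & 0 \\ -2 & 1\end{smallmatrix}\big)$, which freely generate the Sanov subgroup of $\operatorname{SL}_{2}(\mathbb{Z})$ by the ping-pong lemma. Consequently every nonempty reduced word in $T^{2}$ and $ST^{2}S^{-1}$ maps to a nontrivial matrix, hence is already nontrivial in $\operatorname{Mp}_{2}(\mathbb{Z})$: this gives freeness of $F$, and, applied to words representing the central kernel element $Z^{2}=S^{4}$ of the covering, shows $Z^{2}\notin F$. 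Finally, the Sanov subgroup is torsion-free, so it meets the cyclic group $\langle\overline{S}\rangle$ trivially in $\operatorname{SL}_{2}(\mathbb{Z})$; hence any element of $F\cap\langle S\rangle$ lies in the kernel $\{1,Z^{2}\}$, and together with $Z^{2}\notin F$ this forces $F\cap\langle S\rangle=\{1\}$. The only step that is more than bookkeeping is the freeness of the Sanov subgroup, which is standard and may simply be cited; the one subtlety specific to the metaplectic setting is verifying $Z^{2}\notin F$, which is exactly what the pullback argument above handles.
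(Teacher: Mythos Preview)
Your argument is correct. The paper itself does not prove this lemma at all: it is introduced with the phrase ``the following well-known lemma'' and is immediately used without any justification. So your proposal supplies a full proof where the paper gives none.

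A brief comment on the content: both halves of your argument are sound. For (i), the reduction modulo~$2$ to $\operatorname{SL}_{2}(\mathbb{F}_{2})\cong S_{3}$ cleanly gives the lower bound $3$ on the index, and the closure of $\{\Gamma_{\mathrm{odd}},T\Gamma_{\mathrm{odd}},ST\Gamma_{\mathrm{odd}}\}$ under left multiplication by $T^{\pm1}$ and $S^{\pm1}$ (via $TST=ST^{-1}S^{-1}$ and $T^{\pm2},S\in\Gamma_{\mathrm{odd}}$) gives the upper bound. For (ii), the key point peculiar to the metaplectic cover is that $Z^{2}\notin F$, and you handle this correctly: since the images of $T^{2}$ and $ST^{2}S^{-1}$ in $\operatorname{SL}_{2}(\mathbb{Z})$ freely generate the Sanov subgroup, the projection $F\to\operatorname{SL}_{2}(\mathbb{Z})$ is injective on $F$, so $F$ is free on those two elements and cannot contain the nontrivial kernel element $Z^{2}$. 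Combined with the torsion-freeness of the Sanov subgroup, this yields $F\cap\langle S\rangle=\{1\}$ and hence the semi-direct product decomposition $\Gamma_{\mathrm{odd}}=F\rtimes\langle S\rangle$.
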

Consider now a character $\chi$ of $\Gamma_{\mathrm{odd}}$ that is trivial on the free subgroup from Lemma \ref{Gammaodd}, and is thus determined by the 8th root of unity $\chi(S)$. We extend $\chi$ to $\operatorname{Mp}_{2}(\mathbb{Z})$ by taking elements $TM$ and $STM$ with $M\in\Gamma_{\mathrm{odd}}$ to $\chi(M)$ and $\chi(S)\chi(M)$ respectively, and we define a twisted operation of $\operatorname{Mp}_{2}(\mathbb{Z})$ on $D^{2}$ by
\begin{equation}
M*v:=\begin{cases}Mv, & \mathrm{when\ }M\in\Gamma_{\mathrm{odd}}, \\ Mv+\binom{\xi_{H}}{0}, & \mathrm{in\ case\ }M \in T\Gamma_{\mathrm{odd}} \\ Mv+\binom{0}{\xi_{H}}, & \mathrm{if\ }M \in ST\Gamma_{\mathrm{odd}},\end{cases} \label{twistact}
\end{equation}
where $\xi_{H}$ is the vector from Lemma \ref{xiHdef}. We also modify the cocycle from Lemma \ref{cocycle}, and define, for $M=\big(\begin{smallmatrix}a & b \\ c & d\end{smallmatrix}\big)\in\operatorname{SL}_{2}(\mathbb{Z})$ and $v$ as above, the expression
\begin{equation}
\tilde{Q}_{H}(M,v):=\begin{cases}Q(M,v), & \mathrm{if\ }M\in\Gamma_{\mathrm{odd}} \cup T\Gamma_{\mathrm{odd}} \\ Q(M,v)+(\xi_{H},a\eta+b\lambda), & \mathrm{when\ }M \in ST\Gamma_{\mathrm{odd}}.\end{cases} \label{twistQ}
\end{equation}
Using these expressions we obtain the following generalization of Theorem \ref{genact}.
\begin{thm}
Let $H$ be a self-dual quasi-isotropic subgroup of $D$, and consider the map $\chi:\operatorname{Mp}_{2}(\mathbb{Z})\to\mathbb{C}^{\times}$ extending the character of $\Gamma_{\mathrm{odd}}$ that sends $S$ to $\mathbf{e}(-\operatorname{sgn}D/8)$. Then we have the equality $\rho_{D}(M)\mathfrak{a}_{v}^{H}=\chi(M)\mathbf{e}\big(\tilde{Q}_{H}(M,v)\big)\mathfrak{a}_{M*v}^{H}$ for every $M\in\operatorname{Mp}_{2}(\mathbb{Z})$ and $v \in D^{2}$, with $M*v$ and $\tilde{Q}_{H}(M,v)$ from Equations \eqref{twistact} and \eqref{twistQ} respectively. \label{qisotriv}
\end{thm}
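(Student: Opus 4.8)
The plan is to mimic the proof of Theorem~\ref{genact}, reducing to the generators $T$ and $S$ of $\operatorname{Mp}_{2}(\mathbb{Z})$, but now keeping careful track of the coset of $\Gamma_{\mathrm{odd}}$ containing each element, since the twisted action $*$ and the modified cocycle $\tilde{Q}_{H}$ are defined case-by-case on the three cosets $\Gamma_{\mathrm{odd}}$, $T\Gamma_{\mathrm{odd}}$, $ST\Gamma_{\mathrm{odd}}$. First I would verify the formula directly on generators using Lemma~\ref{rhoDaH}: for $S$ we have $S\in ST\cdot(\text{something})$? — no, $S\in ST\Gamma_{\mathrm{odd}}$ precisely when $T^{-1}\in\Gamma_{\mathrm{odd}}$, which is false, so one must first place $S$, $T$, $T^{-1}$ in their correct cosets (e.g.\ $T, T^{-1}\in T\Gamma_{\mathrm{odd}}$ since $T^{-2}=S^{2}\cdot(\text{something})\in\Gamma_{\mathrm{odd}}$ via $Z$, and $S\in ST\Gamma_{\mathrm{odd}}$ iff $T^{-1}S^{-1}S=T^{-1}\notin$... — actually $S=ST\cdot T^{-1}$ and $T^{-1}\notin\Gamma_{\mathrm{odd}}$, so $S\in ST\Gamma_{\mathrm{odd}}$ is wrong; rather $S\in S\Gamma_{\mathrm{odd}}$ and $S\Gamma_{\mathrm{odd}}=ST^{2}\Gamma_{\mathrm{odd}}$ — wait $ST^{2}\in\Gamma_{\mathrm{odd}}$, so $S\Gamma_{\mathrm{odd}}=ST^{2}T^{-2}\Gamma_{\mathrm{odd}}$...). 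Let me just say: I would first settle the coset bookkeeping, determining for each of $T$, $T^{-1}$, $S$ which of the three cosets it lies in, and then confirm that the right-hand side of the claimed formula, with the case-definitions of $M*v$ and $\tilde{Q}_{H}$ plugged in, reproduces exactly the output of Lemma~\ref{rhoDaH}. Here the vector $\xi_{H}$ enters because for non-isotropic quasi-isotropic $H$ we have $\xi_{1,H}=\xi_{H}$ in the notation of Lemma~\ref{xilH} (take $l=1$, legitimate since $\frac{\gamma^{2}}{2}\in\frac12\mathbb{Z}/\mathbb{Z}$ on $H$), so $\rho_{D}(T)$ already produces the $\binom{\xi_{H}}{0}$ shift demanded by the middle case of Equation~\eqref{twistact}, and $\rho_{D}(S)$ conjugates this to the $\binom{0}{\xi_{H}}$ shift, explaining the third case; the extra term $(\xi_{H},a\eta+b\lambda)$ in Equation~\eqref{twistQ} is exactly the phase picked up from $\mathbf{e}\big(-(\lambda,\eta)\big)$-type factors in Lemma~\ref{rhoDaH} when one commutes the $S$ past the $T$-shift.

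Second, I would establish the multiplicativity/cocycle step: if the formula holds for $M$ and for $N$ (both in $\operatorname{Mp}_{2}(\mathbb{Z})$), then it holds for $MN$. This requires two compatibility facts. The first is that $*$ is a genuine left action of $\operatorname{Mp}_{2}(\mathbb{Z})$ on $D^{2}$, i.e.\ $(MN)*v=M*(N*v)$; this follows from the homomorphism property of the coset-index map $\operatorname{Mp}_{2}(\mathbb{Z})\to\operatorname{Mp}_{2}(\mathbb{Z})/\Gamma_{\mathrm{odd}}\cong\mathbb{Z}/3$ (using Lemma~\ref{Gammaodd}) combined with the linear action of $\operatorname{SL}_{2}(\mathbb{Z})$ on $D^{2}$, together with the identities $S\binom{\xi_{H}}{0}=\binom{0}{\xi_{H}}$ and the observation that $\xi_{H}$ is only well-defined modulo $H^{\perp}=H$, so shifts by $H$ must be invisible — which is guaranteed by Lemma~\ref{ortho}, since $\mathfrak{a}_{v}^{H}$ depends on the first coordinate only mod $H^{\perp}=H$ and on the second only mod $H$. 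The second compatibility fact is the twisted cocycle identity $\tilde{Q}_{H}(MN,v)=\tilde{Q}_{H}(N,v)+\tilde{Q}_{H}(M,N*v)$ modulo $\mathbb{Z}$ — note that it need only hold modulo $1$, and indeed must, since the $\xi_{H}$-ambiguity mod $H$ feeds nontrivial rational numbers into $Q$. This reduces, via Lemma~\ref{cocycle}, to checking the discrepancy terms coming from the $(\xi_{H},\cdot)$ correction and from the difference between $M*v$ and $Mv$; I would organize this by the coset of $M$ and of $N$ (nine cases, but collapsing to essentially three by $\mathbb{Z}/3$-additivity), verifying that the $\chi$-factor absorbs the remaining $8$th-root-of-unity discrepancy (this is why $\chi$ is forced to send $S\mapsto\mathbf{e}(-\operatorname{sgn}D/8)$ and is extended to $\operatorname{Mp}_{2}(\mathbb{Z})$ exactly as in the paragraph preceding the theorem).

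The main obstacle I anticipate is precisely the bookkeeping in this second step: unlike the isotropic case, the action $*$ is only projective/twisted and $\tilde{Q}_{H}$ is only a cocycle modulo $\mathbb{Z}$, so one cannot simply quote Lemma~\ref{cocycle} — one has to show that all the correction terms, the coset-dependent shifts by $\xi_{H}$, and the values of $\chi$ conspire to cancel. The cleanest route is probably to note that on the index-$3$ subgroup $\Gamma_{\mathrm{odd}}$ nothing is twisted (the $*$-action is the honest linear action and $\tilde{Q}_{H}=Q$ and $\chi\equiv 1$ there up to $\chi(S)$), so Theorem~\ref{genact}'s proof applies almost verbatim to $\Gamma_{\mathrm{odd}}$ once we know the base cases $T^{2}$, $S$, $ST^{2}S^{-1}$; and then extending across the three cosets is a finite check using the coset representatives $I$, $T$, $ST$ from Lemma~\ref{Gammaodd} and the single generator relation $S^{2}=Z$ (with $Z$ acting by the appropriate sign dictated by $\operatorname{sgn}D$, consistently with $\chi$). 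I would present the base-case verifications for $T$ and $S$ in detail and then state that the cocycle compatibility across cosets is a direct, if lengthy, computation of the same flavour as Lemma~\ref{cocycle}, exhibiting one representative case (say $M\in ST\Gamma_{\mathrm{odd}}$, $N\in T\Gamma_{\mathrm{odd}}$) to show how the $(\xi_{H},a\eta+b\lambda)$ term and the $\binom{0}{\xi_{H}}$ shift interact.
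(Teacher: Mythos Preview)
Your ``cleanest route'' at the end is exactly the paper's proof: first establish the formula on $\Gamma_{\mathrm{odd}}=\langle T^{2},S\rangle$ (where $\xi_{2,H}=0$ so Lemma~\ref{rhoDaH} gives the untwisted formulae, and the only novelty over Theorem~\ref{genact} is the scalar $\chi(S)=\mathbf{e}(-\operatorname{sgn}D/8)$, which cancels in $ST^{2}S^{-1}$), then left-multiply by $T$ and by $ST$ to reach the other two cosets, checking that Equations~\eqref{twistact} and~\eqref{twistQ} are exactly what comes out. Had you led with this, the proposal would simply be correct.

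The earlier, more ambitious plan (verify on $T$, $S$; propagate by showing $*$ is an action and $\tilde{Q}_{H}$ a twisted cocycle) contains a real error of justification. You invoke ``the homomorphism property of the coset-index map $\operatorname{Mp}_{2}(\mathbb{Z})\to\operatorname{Mp}_{2}(\mathbb{Z})/\Gamma_{\mathrm{odd}}\cong\mathbb{Z}/3$'' and later ``$\mathbb{Z}/3$-additivity'' to collapse nine cases to three. But $\Gamma_{\mathrm{odd}}$ is \emph{not} normal (conjugating $S$ by $T$ gives a matrix congruent to $\big(\begin{smallmatrix}1&0\\1&1\end{smallmatrix}\big)$ mod~2, not in $\{I,S\}$), so there is no such homomorphism and the collapse is illegitimate. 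Your confusion in the opening paragraph about which coset contains $S$ is a symptom of this: in fact $S\in\Gamma_{\mathrm{odd}}$ by definition, so $S*v=Sv$ and $\tilde{Q}_{H}(S,v)=Q(S,v)$ with no $\xi_{H}$-correction.

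It happens that $*$ \emph{is} nonetheless a genuine action on $(D/H)^{2}$, but for a different reason: the shift $s(M):=M*v-Mv$ equals $(M-I)\binom{\xi_{H}}{\xi_{H}}$ in $(D/H)^{2}$ (check the three cases using $2\xi_{H}\in H$), i.e.\ a coboundary. The paper never needs this observation because it never asserts global multiplicativity of $*$ or a global cocycle identity for $\tilde{Q}_{H}$; it only performs the two concrete left-multiplications $N\mapsto TN$ and $TN\mapsto STN$ for $N\in\Gamma_{\mathrm{odd}}$, which is both shorter and avoids the normality trap.
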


\begin{rmk}
Note that while the expression $M*v$ for $M\notin\Gamma_{\mathrm{odd}}$ and $\tilde{Q}_{H}(M,v)$ when $M \in ST\Gamma_{\mathrm{odd}}$ depend on the choice of $\xi_{H}$, Lemma \ref{ortho} implies that $\mathfrak{a}_{M*v}^{H}$ with $M \in T\Gamma_{\mathrm{odd}}$ and the combination $\mathbf{e}\big(\tilde{Q}_{H}(M,v)\big)\mathfrak{a}_{M*v}^{H}$ for $M \in ST\Gamma_{\mathrm{odd}}$ only depend on the image of $\xi_{H}$ in $D/H$, which is canonical by Lemma \ref{xiHdef}. Moreover, if $H$ is isotropic then $\chi$ is trivial and Equations \eqref{twistact} and \eqref{twistQ} reduce to $M*v=Mv$ and Equation \eqref{QMv} for all $M\in\operatorname{Mp}_{2}(\mathbb{Z})$, so that Theorem \ref{qisotriv} reproduces Theorem \ref{genact} in this case. See Remark \ref{discqiso} below for the other cases. \label{actwdqiso}
\end{rmk}

\begin{proof}
Lemma \ref{xilH} implies that $\xi_{l,H}$ vanishes for even $l$ and equals $\xi_{H}$ when $l$ is odd. Thus Lemma \ref{rhoDaH} and Remark \ref{actindvec} extend the proof of Theorem \ref{genact} to the case of $M\in\Gamma_{\mathrm{odd}}=\langle S,T^{2} \rangle$, up to scalar multiples coming from the fact that $\rho_{D}(S)$ has the additional multiplier $\mathbf{e}(-\operatorname{sgn}D/8)=\chi(S)$. As $\rho(S^{-1})$ comes with the inverse multiplier, we deduce that the formula for $M=ST^{2}S^{-1}$ involves no additional factors, so that the formula from Theorem \ref{genact} is valid for elements of the free subgroup from Lemma \ref{Gammaodd}. It is now clear that the asserted formula holds for every $M\in\Gamma_{\mathrm{odd}}$ (with our character $\chi$), and note that Remark \ref{actindvec} with $l=1$ (and $\xi_{1,H}=\xi_{H}$) and Equation \eqref{twistact} give we have $(TN)*v=T*Nv$ and $(STN)*v=S\big((TN)*v\big)$ for $N\in\Gamma_{\mathrm{odd}}$. This establishes the desired result when $M=TN \in T\Gamma_{\mathrm{odd}}$, and since the proof of Lemma \ref{cocycle} combines with Equations \eqref{twistact} and \eqref{twistQ} to show that $Q(TN,v)+Q(S,TN*v)=\tilde{Q}_{H}(STN,v)$ for such $N$, the formula for $M=STN \in ST\Gamma_{\mathrm{odd}}$ as well. This completes the proof of the theorem.
\end{proof}

\begin{rmk}
When $H$ is quasi-isotropic and self-dual, Corollary \ref{isoinqiso} yields the subgroup $H_{0}$, and the associated discriminant form $A_{0}:=H_{0}^{\perp}/H_{0}$. The latter is trivial when $H$ is isotropic, but otherwise it has order 4. It is therefore either cyclic, where $\chi(S)=\mathbf{e}(-\operatorname{sgn}A_{0})$ is of order 8 and $H/H_{0}$ is the unique subgroup of order 2, or isomorphic to the Klein 4-group. In the latter case we need the element $\tau \in A_{0}$ generatic $H/H_{0}$ to satisfy $\frac{\tau^{2}}{2}=\frac{1}{2}+\mathbb{Z}$, so that using the notation from \cite{[Sch]}, \cite{[Str]}, \cite{[Ze]}, and others, $A_{0}$ is either isomorphic to $2_{\pm2}^{+2}$ with a unique choice of $\tau$ and $\chi(S)$ of order 4, or to $2^{+2}_{II}$ with a unique $\tau$ and $\chi(S)=1$, or to $2^{-2}_{II}$, where $\chi(S)=-1$ and $\tau$ can be any non-trivial element. \label{discqiso}
\end{rmk}

\smallskip

We conclude this section by modifying the vectors from Equation \eqref{vecH} in order to respect the actions of automorphisms. Let $D$ be any discriminant form, and consider a subgroup $H$ of $D$, a group $G$ of automorphisms of $D$ (that preserve the quadratic form) that fixes $H$ (and thus also $H^{\perp}$), and a character $\psi:G\to\mathbb{C}^{\times}$. For a pair of vectors $\eta$ and $\lambda$ in $D$, denote by $G_{\eta,\lambda}^{H}$ the subgroup of $G$ that stabilizes the cosets $\eta+H^{\perp} \in D/H^{\perp}$ and $\lambda+H \in D/H$. Then we define
\begin{equation}
\mathfrak{a}_{\eta,\lambda}^{H,\psi}=\frac{1}{\sqrt{|G|\cdot|G_{\eta,\lambda}^{H}|}}\sum_{\varphi \in G}\psi^{-1}(\varphi)\mathfrak{a}_{\varphi(\eta),\varphi(\lambda)}^{H}. \label{withaut}
\end{equation}
Note that for $\phi \in G_{\eta,\lambda}^{H}$, Lemma \ref{ortho} presents $\mathfrak{a}_{\varphi\phi(\eta),\varphi\phi(\lambda)}^{H}$ as $\mathfrak{a}_{\varphi(\eta),\varphi(\lambda)}^{H}$ times a factor that depends only on $\phi$, so that the vector from Equation \eqref{withaut} vanishes unless this multiple is $\psi(\phi)$ (and then it can be presented as $\sqrt{|G_{\eta,\lambda}^{H}|/|G|}$ times a sum over $G/G_{\eta,\lambda}^{H}$).

A special case of interest is where $G=\{\pm\operatorname{Id}_{D}\}$, which fixes every $H$, and the character $\psi$ is simply a sign $\pm1$. This case is related to the action of the central element $Z$ of $\operatorname{Mp}_{2}(\mathbb{Z})$ decomposing the representation space $\mathbb{C}[D]$ into the symmetric and anti-symmetric elements. In the case where $2\eta \not\in H^{\perp}$ or $2\lambda \notin H$, i.e., when $G_{\eta,\lambda}^{H}$ is trivial, Equation \eqref{withaut} simplifies to
\begin{equation}
\mathfrak{a}_{\eta,\lambda}^{H,\pm}=\frac{\mathfrak{a}_{\eta,\lambda}^{H}\pm\mathfrak{a}_{-\eta,-\lambda}^{H}}{\sqrt{2}}=\begin{cases} \displaystyle{\sum_{\gamma \in H}\tfrac{\mathbf{e}((\gamma,\eta))}{\sqrt{2|H|}}(\mathfrak{e}_{\lambda+\gamma}\pm\mathfrak{e}_{-\lambda-\gamma})}, & \mathrm{when\ }2\lambda \notin H, \\ \displaystyle{\sum_{\gamma \in H}\tfrac{\mathbf{e}((\gamma,\eta))\pm\mathbf{e}(-(2\lambda+\gamma,\eta))}{\sqrt{2|H|}}\mathfrak{e}_{\lambda+\gamma}}, & \mathrm{if\ }2\lambda \in H,\ 2\eta \not\in H^{\perp} \end{cases} \label{symvec}
\end{equation}
(the second expression comes from Lemma \ref{ortho}). When $2\lambda \in H$ and $2\eta \in H^{\perp}$, i.e., the case where $G_{\eta,\lambda}^{H}=G$, the number $\mathbf{e}\big((2\lambda,\eta)\big)$ is a sign, and Equation \eqref{withaut} collapses $\mathfrak{a}_{\eta,\lambda}^{H,\pm}$ to simply $\mathfrak{a}_{\eta,\lambda}^{H}$ if $\mathbf{e}\big((2\lambda,\eta)\big)=\pm1$ and to 0 when it is the opposite sign.

Many results from before extend to these more general vectors.
\begin{prop}
Fix $G$ and $\psi$. Then Lemma \ref{ortho} is valid for the vectors from Equation \eqref{withaut}, with the condition for the non-orthogonality of $\mathfrak{a}_{\eta,\lambda}^{H,\psi}$ and $\mathfrak{a}_{\kappa,\nu}^{H,\psi}$ being that the action of $G$ does not relate $(\eta+H^{\perp},\lambda+H)$ to $(\kappa+H^{\perp},\nu+H)$. The formulae from Lemma \ref{rhoDaH} also remain valid when the superscript $\varepsilon$ added throughout. Moreover, if $H$ is a self-dual isotropic subgroup, then Theorems \ref{genact} and \ref{qisotriv} hold for the vectors with $\varepsilon$ as well. \label{propsym}
\end{prop}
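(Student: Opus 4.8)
The plan is to exploit the fact that any $\varphi\in G$ induces a unitary operator $U_{\varphi}$ on $\mathbb{C}[D]$, determined by $U_{\varphi}\mathfrak{e}_{\gamma}=\mathfrak{e}_{\varphi(\gamma)}$, which commutes with the entire representation $\rho_{D}$. I would check the commutation on the generators $T$ and $S$ of Equation \eqref{Weildef}, where it needs only that $\varphi$ preserves $\tfrac{\gamma^{2}}{2}$ and the induced pairing (the central element of $\operatorname{Mp}_{2}(\mathbb{Z})$ acting by a scalar). Since $G$ fixes $H$ and $H^{\perp}$, a change of summation index in Equation \eqref{vecH} gives $U_{\varphi}\mathfrak{a}_{\eta,\lambda}^{H}=\mathfrak{a}_{\varphi(\eta),\varphi(\lambda)}^{H}$; this immediately yields the first assertion of Lemma \ref{ortho} for the vectors of Equation \eqref{withaut}, and it exhibits $\mathfrak{a}_{\eta,\lambda}^{H,\psi}$ as $\sqrt{|G|/|G_{\eta,\lambda}^{H}|}$ times the image of $\mathfrak{a}_{\eta,\lambda}^{H}$ under the projection $\tfrac{1}{|G|}\sum_{\varphi}\psi^{-1}(\varphi)U_{\varphi}$ onto the $\psi$-isotypic subspace $\mathbb{C}[D]_{\psi}=\{w:U_{\varphi}w=\psi(\varphi)w\ \forall\varphi\in G\}$. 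In particular these vectors lie in $\mathbb{C}[D]_{\psi}$ and, being the isotypic projections of a basis, span it.

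For the orthonormality statement I would expand $\langle\mathfrak{a}_{\eta,\lambda}^{H,\psi},\mathfrak{a}_{\kappa,\nu}^{H,\psi}\rangle$ as a double sum over $G\times G$ and apply Lemma \ref{ortho} termwise: a term survives only when $\varphi^{-1}\varphi'$ relates $(\kappa+H^{\perp},\nu+H)$ to $(\eta+H^{\perp},\lambda+H)$, so the two vectors are orthogonal unless $G$ relates these pairs, and proportional (via $\varphi\mapsto\varphi\varphi_{0}$) when it does. For the norm, the surviving indices are $\varphi'=\varphi\phi$ with $\phi\in G_{\eta,\lambda}^{H}$; I would verify that the scalar $c(\phi)$ defined by $\mathfrak{a}_{\phi(\eta),\phi(\lambda)}^{H}=c(\phi)\mathfrak{a}_{\eta,\lambda}^{H}$ (Lemma \ref{ortho}) is a \emph{character} of $G_{\eta,\lambda}^{H}$ --- a one-line computation using that $\phi$ preserves the pairing and that $\mathfrak{a}^{H}$ depends on its first index only modulo $H^{\perp}$ --- whereupon character orthogonality collapses the norm to $1$ exactly when $c=\psi$ on $G_{\eta,\lambda}^{H}$ and to $0$ otherwise, reproducing the vanishing criterion noted after Equation \eqref{withaut}. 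Choosing one representative per $G$-orbit then gives an orthonormal basis of $\mathbb{C}[D]_{\psi}$.

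For the transformation formulae I would write $\mathfrak{a}_{\eta,\lambda}^{H,\psi}$ as the average above and move $\rho_{D}(M)$ inside using $\rho_{D}(M)U_{\varphi}=U_{\varphi}\rho_{D}(M)$, reducing everything to $\rho_{D}(M)\mathfrak{a}_{\eta,\lambda}^{H}$ and then invoking Lemma \ref{rhoDaH} (respectively Theorem \ref{genact} or \ref{qisotriv}). Here $U_{\varphi}$ commutes with the matrix action on $D^{2}$ since $M$ has integer entries, and from the defining relations $\varphi$ fixes $Q(M,\cdot)$ and carries $\xi_{l,H}$ and $\xi_{H}$ to vectors congruent to themselves modulo $H^{\perp}$. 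The only point needing attention is that the normalization $\sqrt{|G|\cdot|G_{\eta,\lambda}^{H}|}$ reappears with the correct new index, i.e.\ that $|G_{\eta,\lambda}^{H}|$ equals $|G_{-\lambda,\eta}^{H^{\perp}}|$, $|G_{\eta+l\lambda+\xi_{l,H},\lambda}^{H}|$, and $|G_{M*v}^{H}|$ in the three cases. Each follows by recognizing the index operation as a $G$-equivariant bijection of the relevant cosets: for $S$ use $(H^{\perp})^{\perp}=H$; for $T^{l}$ use $lH\subseteq H^{\perp}$ (the hypothesis of Lemma \ref{xilH}) together with $\varphi(\xi_{l,H})\equiv\xi_{l,H}$; and for Theorems \ref{genact} and \ref{qisotriv} use that $M\in\operatorname{SL}_{2}(\mathbb{Z})$ acts invertibly on $D/H$ (with $H=H^{\perp}$) and that the shift by $\xi_{H}$ in $M*v$ is $G$-invariant modulo $H$.

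The step I expect to be the main obstacle is the quasi-isotropic case of Theorem \ref{qisotriv}, because $\tilde{Q}_{H}(M,v)$ from Equation \eqref{twistQ} is \emph{not} visibly invariant under $v\mapsto\varphi(v)$. I would sidestep this by never separating $\tilde{Q}_{H}$ from the index vector: applying $U_{\varphi}$ to the identity of Theorem \ref{qisotriv} and using $\rho_{D}(M)U_{\varphi}=U_{\varphi}\rho_{D}(M)$ forces $\mathbf{e}\big(\tilde{Q}_{H}(M,\varphi v)\big)\mathfrak{a}_{M*(\varphi v)}^{H}=U_{\varphi}\big[\mathbf{e}\big(\tilde{Q}_{H}(M,v)\big)\mathfrak{a}_{M*v}^{H}\big]$, so the whole combination $\mathbf{e}(\tilde{Q}_{H}(M,v))\mathfrak{a}_{M*v}^{H}$ transforms under $G$ exactly as $\mathfrak{a}_{v}^{H}$ does, and the averaging goes through unchanged ($\chi(M)$ being a scalar is left alone). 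The same device with trivial $\chi$ handles Theorem \ref{genact}, and specializing to $G=\{\pm\operatorname{Id}_{D}\}$ with $\psi$ the sign $\varepsilon=\pm1$ recovers the symmetric/antisymmetric vectors of Equation \eqref{symvec} as a special case.
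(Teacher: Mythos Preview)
Your proposal is correct and follows essentially the same approach as the paper: both rest on the $G$-invariance of the quadratic form and pairing, the commutation of the $G$-action with $\rho_{D}$, and---for the quasi-isotropic case---the observation that the combination $\mathbf{e}\big(\tilde{Q}_{H}(M,v)\big)\mathfrak{a}_{M*v}^{H}$ is well-defined despite the individual pieces depending on the choice of $\xi_{H}$ (the paper invokes Remark \ref{actwdqiso} for this, you derive it from the commutation identity). Your treatment is somewhat more explicit about the orthonormality (verifying $c(\phi)$ is a character) and the matching of normalizations, but the underlying argument is the same.
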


\begin{proof}
The first and second assertions follow from the pairings and quadratic form values being invariant under $G$, as well as the fact that $2\xi_{l,H} \in H^{\perp}$ and $l\lambda \in H^{\perp}$ for $\lambda \in H$ in the situation from Lemma \ref{xilH}. This also implies that the expression from Equation \eqref{QMv} is invariant under $G$, and since the action of $G$ commutes with that of $\operatorname{Mp}_{2}(\mathbb{Z})$, this establishes the last assertion in the isotropic case, as well as in the quasi-isotropic one for $M\in\Gamma_{\mathrm{odd}}$. For $M \in T\Gamma_{\mathrm{odd}}$ the same argument combines with the fact that $2\xi_{H} \in H^{\perp}=H$ to give the desired result, and when $M \in ST\Gamma_{\mathrm{odd}}$, Remark \ref{actwdqiso} implies that replacing $\xi_{H}$ by $\varphi(\xi_{H})$ (which lies in the same coset modulo $H$ by Lemma \ref{xiHdef}) in the summand associated with $\varphi$ does not change the result. Thus the assertion is true also in the remaining cases. This proves the proposition.
\end{proof}

Such characters can be used to determine the complete decomposition of some Weil representations into irreducible components. For example, assume that $D$ is a \emph{cyclic} discriminant form, of size $N$, and fix a generator $\gamma$ of $D$. Then $\frac{\gamma^{2}}{2}$ equals $\frac{t}{N}+\mathbb{Z}$ when $N$ is odd and $\frac{t}{2N}+\mathbb{Z}$ in case $N$ is even, where $t$ is prime to $N$. The automorphism group $G$ of $D$ is a product of $\{\pm1\}$'s, one for each prime dividing $N$, except for $p=2$ when $N$ is even but indivisible by 4. The subgroups of $D$ are determined by their cardinality, which is a divisor $M$ of $N$, and the subgroup $H_{M}$ of cardinality $M$ is isotropic if and only if $M^{2}$ divides $N$ and $\frac{N}{M^{2}}$, which is the size of the quotient $A_{M}:=H_{M}^{\perp}/H_{M}$, has the same parity of $N$. We say that a character $\psi$ of $G$ is \emph{admissible for $M$} if it attains $+1$ on the $-1$ component associated with any prime $p$ that does not divide $\frac{N}{M^{2}}$, as well as with $p=2$ in case $4$ divides $N$ but does not divide $\frac{N}{M^{2}}$. For such a discriminant form we obtain the following decomposition.
\begin{thm}
The set of irreducible representations of $\rho_{D}$ is in one-to-one correspondence with pairs $(M,\psi)$ where $M$ is such a divisor of $N$ and $\psi$ is a character of $G$ that is admissible for $M$. The sub-representation associated with such a pair $M$ and $\psi$ consists of those elements of $\mathbb{C}[D]$ on which $G$ operates via $\psi$, which are in the image of $\uparrow_{H_{M}}\mathbb{C}[A_{M}]$, and which are perpendicular to $\uparrow_{H_{L}}\mathbb{C}[A_{L}]$ for any divisor $L$ of $N$ which is properly divisible by $M$. \label{cycdecom}
\end{thm}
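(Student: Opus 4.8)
The plan is to build the decomposition in three stages: reduce to cyclic discriminant forms of prime-power order, stratify $\mathbb{C}[D]$ by the arrow operators of Corollary \ref{arrow}, and then refine each stratum by the action of $G$ and prove irreducibility.

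First I would record the structural facts that make everything $G$-equivariant. Any form-preserving automorphism of $D$ commutes with $\rho_{D}$, so $\mathbb{C}[D]=\bigoplus_{\psi}\mathbb{C}[D]^{\psi}$ is a decomposition into sub-representations and, by Schur's lemma, $G$ acts by a scalar character on every irreducible constituent. Since subgroups of a cyclic group are characteristic, $G$ fixes every $H_{M}$ and $H_{M}^{\perp}$, so for isotropic $H_{M}$ the image $V_{M}:=\uparrow_{H_{M}}\mathbb{C}[A_{M}]$ is a $G$-stable sub-representation and, as is immediate from the formula in Corollary \ref{arrow}, $\uparrow_{H_{M}}$ is equivariant for the induced action of $G$ on $A_{M}$. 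Finally, writing $D=\bigoplus_{p}D_{p}$ for the primary decomposition one has $\rho_{D}=\bigotimes_{p}\rho_{D_{p}}$ and $G=\prod_{p}G_{p}$ with $G_{p}$ the corresponding group of $D_{p}$; since the constituents of the various $\rho_{D_{p}}$ factor through $\operatorname{SL}_{2}(\mathbb{Z}/c_{p})$ for conductors $c_{p}$ that are coprime prime powers, the Chinese remainder isomorphism $\operatorname{SL}_{2}(\mathbb{Z}/\prod_{p}c_{p})\cong\prod_{p}\operatorname{SL}_{2}(\mathbb{Z}/c_{p})$ shows that a tensor product of irreducible constituents taken one from each prime is again irreducible and that the pairs $(M,\psi)$ multiply accordingly. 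This reduces the theorem to the case $D=\mathbb{Z}/p^{a}$, with only routine metaplectic bookkeeping when $\operatorname{sgn}D$ is odd.

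For $D=\mathbb{Z}/p^{a}$, Remark \ref{compsub} gives $V_{p^{j+1}}\subseteq V_{p^{j}}$, so the $V_{p^{j}}$ are linearly ordered; putting $W_{p^{j}}:=V_{p^{j}}\cap V_{p^{j+1}}^{\perp}$ (and $W_{p^{\lfloor a/2\rfloor}}:=V_{p^{\lfloor a/2\rfloor}}$) and using $\dim V_{p^{j}}=|A_{p^{j}}|=p^{a-2j}$, a telescoping dimension count yields the orthogonal decomposition $\mathbb{C}[D]=\bigoplus_{j}W_{p^{j}}$ into sub-representations, which refines to $\mathbb{C}[D]=\bigoplus_{j,\psi}W_{p^{j}}^{\psi}$ with $W_{p^{j}}^{\psi}:=W_{p^{j}}\cap\mathbb{C}[D]^{\psi}$. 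Because $\uparrow_{H_{p^{j}}}$ identifies $W_{p^{j}}^{\psi}$ isometrically and equivariantly with the ``primitive'' $\psi$-isotypic part of $\mathbb{C}[A_{p^{j}}]$, and the $p$-component $G_{p}$ of $G$ acts trivially on $A_{p^{j}}$ exactly when $p\nmid p^{a-2j}$, a short count of $G$-orbits shows that $W_{p^{j}}^{\psi}\neq0$ if and only if $\psi$ is admissible for $p^{j}$ (the clause about $p=2$ and $4\mid N$ being precisely the $2$-adic refinement of this count). Distinct pairs give non-isomorphic constituents: the order of $\rho_{D}(T)$ on $W_{p^{j}}$ depends only on $a-2j$ and so separates the $j$'s, while the scalar character by which $G$ acts separates the $\psi$'s.

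The remaining point, and the crux, is that each nonzero $W_{p^{j}}^{\psi}$ is irreducible; the decomposition $\mathbb{C}[D]=\bigoplus_{j,\psi}W_{p^{j}}^{\psi}$ then displays exactly the asserted bijection. Via $\uparrow_{H_{p^{j}}}$ this reduces to the irreducibility of the primitive $\psi$-part of $\mathbb{C}[\mathbb{Z}/p^{b}]$ for each character $\psi$ (the case $b\le1$ being the classical Weil representation of $\mathbb{Z}/p$, where irreducibility of the $\pm$-parts is standard). I would argue by computing the commutant: an endomorphism $\Phi$ commuting with $\rho_{D}$ preserves both the eigenspaces of the diagonal operator $\rho_{D}(T)$ (whose eigenvalue on $\mathfrak{e}_{\gamma}$ is $\mathbf{e}(\gamma^{2}/2)$) and the $G$-isotypic pieces, and one then analyses how $\rho_{D}(S)$ — whose matrix in the $\mathfrak{e}$-basis has all entries nonzero — together with the powers $\rho_{D}(T^{k})$ intertwines these eigenspaces, deducing that $\Phi$ is scalar on each $W_{p^{j}}^{\psi}$. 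The hard part is exactly this eigenspace/$S$-matrix analysis: away from elements of $p$-adic valuation $0$ the $T$-eigenspaces are genuinely higher-dimensional, so one must show that $\rho_{D}(S)$ and $G$ mix them irreducibly; alternatively one may verify $\langle\chi_{W_{p^{j}}^{\psi}},\chi_{W_{p^{j}}^{\psi}}\rangle=1$ by Gauss-sum evaluations over the finite quotient through which $\rho_{D}$ factors, or match dimensions against the classification of the representations of $\operatorname{SL}_{2}(\mathbb{Z}/p^{n})$. A further, more technical, obstacle is $p=2$, where the quadratic form is $2$-adically subtler, the relevant self-dual subgroup may be only quasi-isotropic rather than isotropic (so Theorem \ref{qisotriv} and the order-$4$ discriminant forms of Remark \ref{discqiso} intervene), and the metaplectic double cover is genuinely needed.
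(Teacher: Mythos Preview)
Your plan follows essentially the same architecture as the paper's sketch: stratify $\mathbb{C}[D]$ by the arrow images $V_{M}=\uparrow_{H_{M}}\mathbb{C}[A_{M}]$, refine each stratum by the characters of $G$, and then prove that the resulting pieces are irreducible and pairwise non-isomorphic. The one structural difference is that you first pass to prime-power order via the tensor decomposition and the Chinese remainder isomorphism for $\operatorname{SL}_{2}$, whereas the paper works with a general cyclic $D$ of order $N$ throughout. Your extra step is harmless (and your argument that tensors of irreducibles over coprime conductors remain irreducible is the right one), but it is not needed: for cyclic $D$ the automorphism group already splits as the product of the $\{\pm1\}$'s over the primes dividing $N$, and the $T$-eigenvalue argument the paper invokes does not require a prime-power hypothesis.

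On the crux---irreducibility of the primitive $\psi$-piece---you and the paper diverge in an interesting way. The paper asserts that this piece ``admits a basis consisting of eigenvectors for $\rho_{D}(T)$ with \emph{different} eigenvalues'', from which irreducibility is indeed immediate (any subrepresentation is a sum of $T$-eigenlines, and $\rho_{D}(S)$ applied to one eigenline hits them all). You, by contrast, flag that away from generators the $T$-eigenspaces in $W_{1}^{\psi}$ can be higher-dimensional, and you are right: already for $D=\mathbb{Z}/p^{2}$ with odd $p$ the primitive even part $W_{1}^{+}$ has a $(p-1)/2$-dimensional eigenspace for the eigenvalue $1$, so the paper's one-line sketch is, as written, too optimistic for $|A_{M}|$ not squarefree. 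In that sense your proposal is more honest about where the work lies. Neither you nor the paper actually closes this gap; the clean routes are exactly the ones you list---either compute $\langle\chi_{W_{1}^{\psi}},\chi_{W_{1}^{\psi}}\rangle$ over the finite quotient $\operatorname{SL}_{2}(\mathbb{Z}/N\mathbb{Z})$ via Gauss sums, or match dimensions against the Nobs--Wolfart classification of irreducibles of $\operatorname{SL}_{2}(\mathbb{Z}/p^{n})$---and carrying one of them out would complete both your argument and the paper's.
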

Indeed, we have a surjective map from $G$ onto the automorphism group of $A_{M}$ for each $M$, and the kernel of this map consists precisely of those $\{\pm1\}$'s that are associated with the primes in the definition of admissibility. Hence Remark \ref{compsub} restricts the proof of Theorem \ref{cycdecom} to verifying the irreducibility of the representations associated with $M=1$, where it is clear that vectors on which $G$ operates via $\psi$ exist for every $\psi$ (check the vectors obtained from a generator). Then one can verify that the space in question admits a basis consisting of eigenvectors for $\rho_{D}(T)$ with different eigenvalues, and applying $\rho_{D}(S)$ to each one of them gives a linear combination involving all the different eigenvalues, and the irreducibility easily follows. It is clear that neither the description of the subgroups of $D$, nor the result of Theorem \ref{cycdecom}, hold when $D$ is not cyclic.

\section{Quotients of Prime Exponent \label{PrimQuot}}

Some discriminant forms $D$ do not contain self-dual quasi-isotropic subgroups, and for a subgroup $H$ that is not quasi-isotropic and self-dual, Lemma \ref{rhoDaH} indicates that bases in which the action of $\operatorname{Mp}_{2}(\mathbb{Z})$ look particularly simple may need to involve more that one group (e.g., $H$ and $H^{\perp}$). For doing so we shall need some additional formulae.
\begin{lem}
Assume that $H$ is contained in another subgroup $K$ of $D$, and let $\mathfrak{R}$ be a set of representatives for $K/H$ in $K$. Then for $\eta$ and $\lambda$ in $D$ we have \[\mathfrak{a}_{\eta,\lambda}^{K}=\frac{1}{\sqrt{|K/H|}}\sum_{\tau\in\mathfrak{R}}\mathbf{e}\big((\tau,\eta)\big)\mathfrak{a}_{\eta,\lambda+\tau}^{H}.\] If, in addition, we take $l$ and $\xi_{l,H}$ as in Lemma \ref{xilH}, then we have the equality \[\rho_{D}(T^{l})\mathfrak{a}_{\eta,\lambda}^{K}=\frac{1}{\sqrt{|K/H|}}\mathbf{e}\big(l\tfrac{\lambda^{2}}{2}\big)\sum_{\tau\in\mathfrak{R}}\mathbf{e}\big((\tau,\eta+l\lambda)+l\tfrac{\tau^{2}}{2}\big) \mathfrak{a}_{\eta+l\lambda+l\tau+\xi_{l,H},\lambda+\tau}^{H}.\] The summands in these formulae are independent of the choice of $\mathfrak{R}$, and in case $K \subseteq H^{\perp}$ we can omit $l\tau$ from the first index in the latter expression. \label{actTKH}
\end{lem}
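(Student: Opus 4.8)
The first identity is purely a statement about the vectors $\mathfrak{a}^{K}_{\eta,\lambda}$ and $\mathfrak{a}^{H}_{\eta,\lambda}$ from Equation \eqref{vecH}, so the plan is to unfold both sides into the natural basis. Writing $\mathfrak{a}^{K}_{\eta,\lambda}=\frac{1}{\sqrt{|K|}}\sum_{\gamma\in K}\mathbf{e}\big((\gamma,\eta)\big)\mathfrak{e}_{\lambda+\gamma}$, I would split the sum over $K$ according to the coset of $H$: every $\gamma\in K$ is uniquely $\tau+\delta$ with $\tau\in\mathfrak{R}$ and $\delta\in H$, and then $(\gamma,\eta)=(\tau,\eta)+(\delta,\eta)$ and $\lambda+\gamma=(\lambda+\tau)+\delta$, so the inner sum over $\delta\in H$ reassembles into $\sqrt{|H|}\,\mathfrak{a}^{H}_{\eta,\lambda+\tau}$. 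Since $\sqrt{|K|}=\sqrt{|K/H|}\cdot\sqrt{|H|}$, this gives the first formula, and the independence of $\mathfrak{R}$ is immediate from Lemma \ref{ortho} (changing a representative $\tau$ by an element $\delta\in H$ multiplies $\mathbf{e}\big((\tau,\eta)\big)$ and $\mathfrak{a}^{H}_{\eta,\lambda+\tau}$ by reciprocal factors).

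For the second identity the cleanest route is to apply $\rho_{D}(T^{l})$ to the first one and then push the operator inside the sum using the second formula of Lemma \ref{rhoDaH}. Applying that lemma to $\mathfrak{a}^{H}_{\eta,\lambda+\tau}$ (which is legitimate: $l$ and $\xi_{l,H}$ depend only on $H$, not on $\lambda$) produces $\mathbf{e}\big(l\tfrac{(\lambda+\tau)^{2}}{2}\big)\mathfrak{a}^{H}_{\eta+l(\lambda+\tau)+\xi_{l,H},\,\lambda+\tau}$. Expanding $\tfrac{(\lambda+\tau)^{2}}{2}=\tfrac{\lambda^{2}}{2}+(\lambda,\tau)+\tfrac{\tau^{2}}{2}$ and collecting the $\mathbf{e}\big(l\tfrac{\lambda^{2}}{2}\big)$ out front, the remaining phase from the $\tau$-summand is $\mathbf{e}\big((\tau,\eta)+l(\tau,\lambda)+l\tfrac{\tau^{2}}{2}\big)=\mathbf{e}\big((\tau,\eta+l\lambda)+l\tfrac{\tau^{2}}{2}\big)$, and the new first index is $\eta+l\lambda+l\tau+\xi_{l,H}$, matching the claim. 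For the two final remarks: the summands are again $\mathfrak{R}$-independent, which one checks by the same Lemma \ref{ortho} bookkeeping (noting that the hypothesis of Lemma \ref{xilH} forces $l\tfrac{\tau^{2}}{2}$ to behave additively modulo the ambiguity); and when $K\subseteq H^{\perp}$ one has $(\tau,\eta')=0$ in the relevant pairings for any $\eta'\in$ the span of $\tau$'s, so by Lemma \ref{ortho} the index $\eta+l\lambda+l\tau+\xi_{l,H}$ and $\eta+l\lambda+\xi_{l,H}$ give the same vector $\mathfrak{a}^{H}$, allowing the deletion of $l\tau$.

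None of the steps is a genuine obstacle; this is a bookkeeping lemma whose only subtlety is keeping the phases straight. The one point requiring a little care is the very last claim ($l\tau$ may be dropped when $K\subseteq H^{\perp}$): here I want to invoke Lemma \ref{ortho}, which says $\mathfrak{a}^{H}_{\eta',\lambda'}$ depends on $\eta'$ only modulo $H^{\perp}$, and since $\tau\in K\subseteq H^{\perp}$ we indeed have $l\tau\in H^{\perp}$, so $\mathfrak{a}^{H}_{\eta+l\lambda+l\tau+\xi_{l,H},\lambda+\tau}=\mathfrak{a}^{H}_{\eta+l\lambda+\xi_{l,H},\lambda+\tau}$. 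That is the whole argument.
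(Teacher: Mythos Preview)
Your proof is correct and follows exactly the same approach as the paper: the first formula is obtained by decomposing the sum over $K$ into cosets of $H$ and reassembling via Equation \eqref{vecH}, the second by applying Lemma \ref{rhoDaH} termwise to the first, and the final remarks are consequences of Lemma \ref{ortho} together with the defining property of $\xi_{l,H}$ in Lemma \ref{xilH}. The paper's proof is simply a more compressed version of what you wrote.
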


\begin{proof}
The first formula follows from re-ordering the sum in Equation \eqref{vecH}, the second one follows easily from the first via Lemma \ref{rhoDaH}, and the remaining ones are now simple consequences of Lemma \ref{ortho} and the definition of $\xi_{l,H}$ in Lemma \ref{xilH}. This proves the lemma.
\end{proof}

We shall need these formulae when $H$ is quasi-isotropic and $K=H^{\perp}$, for powers $l$ satisfying some co-primality conditions. But first we shall need the following extension of Milgram's formula. Let $H \subseteq D$ be quasi-isotropic, with the isotropic subgroup $H_{0}$ from Corollary \ref{isoinqiso} and the associated discriminant form $A_{0}$. Take an integer $l$ that is prime to $|H^{\perp}/H|$, and then if $H$ is isotropic or $l$ is odd then we write $A_{0}(l)$ for the re-scaling of $A_{0}$ by $l$. For $H$ not isotropic and even $l$, multiplying the quadratic form on $H^{\perp}$ by $l$ transforms $A$ to a discriminant form, and we allow the abuse of notation of writing $A_{0}(l)$ for the resulting discriminant form there as well.

Let $\xi_{l,H}$ be the vector from Lemma \ref{xilH}, and choose $k\in\mathbb{Z}$ with the following properties. In case $l$ is odd or $H$ is isotropic, we require that $kl$ is congruent to 1 modulo the denominators of $(\gamma,\xi_{l,H})$ and $\frac{\gamma^{2}}{2}$ for $\gamma \in H^{\perp}$, as well as that of $\frac{\xi_{l,H}^{2}}{2}$. When $H$ is not isotropic and $l$ is even, so that $\xi_{l,H} \in H^{\perp}$ and $|H^{\perp}/H|$ is odd, we impose these congruences only modulo the odd parts of these denominators, and demand that $k$ be even.
\begin{lem}
Given such $H$ and $l$, take $\xi_{l,H}$ and $k$ as defined above. Then each term in the sum $\sum_{\sigma \in H^{\perp}/H}\mathbf{e}\big(l\frac{\sigma^{2}}{2}-(\sigma,\xi_{l,H})\big)$ is well-defined, and the value of the entire sum is $\mathbf{e}\big(-k\frac{\xi_{l,H}^{2}}{2}\big)\mathbf{e}\big(\operatorname{sgn}A_{0}(l)/8\big)\cdot\sqrt{|H^{\perp}/H|}$. \label{Milodd}
\end{lem}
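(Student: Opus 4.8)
The plan is to reduce the sum to a genuine Milgram sum for the discriminant form $A_0(l)$ (possibly twisted by the inhomogeneity coming from $\xi_{l,H}$), and then apply Equation \eqref{Milgram}. First I would observe that because $l$ is prime to $|H^\perp/H|$, multiplication by $l$ is a bijection of $A=H^\perp/H$ onto itself, so re-indexing $\sigma\mapsto k\sigma$ (using that $kl\equiv 1$ modulo the relevant denominators, so that $l\cdot\frac{(k\sigma)^2}{2}\equiv k\frac{\sigma^2}{2}$ and $(k\sigma,\xi_{l,H})\equiv k(\sigma,\xi_{l,H})$ as elements of $\mathbb{Q}/\mathbb{Z}$) is the natural first move. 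Before that, though, I must check that each summand is well-defined, i.e.\ independent of the representative $\sigma\in H^\perp$ of its class in $A$: this is exactly Lemma \ref{xilH}, since changing $\sigma$ by an element $\delta\in H$ changes $l\frac{\sigma^2}{2}$ by $l\frac{\delta^2}{2}+l(\sigma,\delta)\equiv(\delta,\xi_{l,H})+l(\sigma,\delta)$ and changes $(\sigma,\xi_{l,H})$ by $(\delta,\xi_{l,H})$, and $l(\sigma,\delta)\in\mathbb{Z}$ because $l$ kills the pairing on $H$ — wait, more carefully, $l(\sigma,\delta)$ is an integer since $\delta\in H$ and $l$ clears the denominator of the pairing $(\,\cdot\,,\delta)$ restricted to... actually the cleanest statement is that $l\frac{\sigma^2}{2}-(\sigma,\xi_{l,H})$ descends to $A$ precisely because $l\frac{\gamma^2}{2}=(\gamma,\xi_{l,H})$ on $H$ by Lemma \ref{xilH}, and I would spell that out.

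Next, I would complete the square. Write $l\frac{\sigma^2}{2}-(\sigma,\xi_{l,H})$; since $k$ inverts $l$ on the relevant denominators, one has $l\frac{\sigma^2}{2}-(\sigma,\xi_{l,H})\equiv l\frac{(\sigma-k\xi_{l,H})^2}{2}-l\cdot k^2\frac{\xi_{l,H}^2}{2}+\bigl[\text{cross terms that cancel}\bigr]$ in $\mathbb{Q}/\mathbb{Z}$. Concretely, $l\frac{(\sigma-k\xi_{l,H})^2}{2}=l\frac{\sigma^2}{2}-lk(\sigma,\xi_{l,H})+lk^2\frac{\xi_{l,H}^2}{2}$, and $lk(\sigma,\xi_{l,H})\equiv(\sigma,\xi_{l,H})$ by the congruence on the denominator of $(\gamma,\xi_{l,H})$, while $lk^2\frac{\xi_{l,H}^2}{2}\equiv k\frac{\xi_{l,H}^2}{2}$ using $lk\equiv1$ modulo the denominator of $\frac{\xi_{l,H}^2}{2}$. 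Hence the summand equals $\mathbf{e}\bigl(l\frac{(\sigma-k\xi_{l,H})^2}{2}\bigr)\mathbf{e}\bigl(-k\frac{\xi_{l,H}^2}{2}\bigr)$ (as a function on $A$, once one checks $\sigma\mapsto\sigma-k\xi_{l,H}$ is a well-defined bijection of $A$ — here I must be slightly careful in the non-isotropic even-$l$ case, where $\xi_{l,H}\in H^\perp$ and $k$ is chosen even, which is precisely the hypothesis that makes $k\xi_{l,H}$ land in $H^\perp$ with the right $2$-adic behavior so the shift makes sense on $A_0(l)$). Pulling the constant $\mathbf{e}(-k\frac{\xi_{l,H}^2}{2})$ out of the sum, the remaining sum is $\sum_{\sigma\in A}\mathbf{e}\bigl(l\frac{\sigma^2}{2}\bigr)$, which is the Milgram sum for $A_0(l)$ and equals $\mathbf{e}(\operatorname{sgn}A_0(l)/8)\sqrt{|A|}=\mathbf{e}(\operatorname{sgn}A_0(l)/8)\sqrt{|H^\perp/H|}$ by Equation \eqref{Milgram}.

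The main obstacle I anticipate is the bookkeeping in the non-isotropic, even-$l$ case: there $\frac{\sigma^2}{2}$ takes values in $\frac12\mathbb{Z}/\mathbb{Z}$ on $H$ (not $0$), the form $l\frac{\cdot^2}{2}$ on $H^\perp$ only becomes a genuine $\mathbb{Q}/\mathbb{Z}$-quadratic form on $A$ after the rescaling that defines $A_0(l)$, and one needs the parity conditions ($k$ even, congruences only modulo odd parts) to ensure both that the completion-of-the-square identity holds $2$-adically and that $\sigma\mapsto\sigma-k\xi_{l,H}$ is a well-defined involution of $A_0(l)$ fixing the quadratic form up to the extracted constant. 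I would handle this by treating the prime $2$ separately: since $|H^\perp/H|$ is odd in this case, the shift $k\xi_{l,H}$ has no effect modulo odd denominators beyond what is already accounted for, and the evenness of $k$ guarantees $k\frac{\xi_{l,H}^2}{2}$ is still a well-defined element of $\mathbb{Q}/\mathbb{Z}$ with the stated value; everything else goes through as in the odd case. I would also double-check the isotropic sub-case separately as a sanity check: there $\xi_{l,H}\in H^\perp$ can be taken $0$ when $l$ clears all denominators, the sum collapses to $\sum_{\sigma\in A}\mathbf{e}(l\frac{\sigma^2}{2})$ directly, and the formula reduces to ordinary Milgram for $A(l)$, confirming the general statement.
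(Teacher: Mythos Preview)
Your completion-of-the-square argument is correct and matches the paper's proof in the cases where $\xi_{l,H}\in H^{\perp}$, namely when $H$ is isotropic or when $l$ is even. In those cases the shift $\sigma\mapsto\sigma-k\xi_{l,H}$ is a bijection of $A=H^{\perp}/H$, and you recover the Milgram sum for $A_0(l)$ exactly as you describe.

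However, you have missed the case where $H$ is quasi-isotropic but not isotropic and $l$ is \emph{odd}. There $\xi_{l,H}=\xi_H$, and since $\tfrac{\gamma^2}{2}=(\gamma,\xi_H)\neq 0$ for some $\gamma\in H$, the element $\xi_H$ does \emph{not} lie in $H^{\perp}$ (only in $H_0^{\perp}$). Consequently $k\xi_H\notin H^{\perp}$ (as $k$ is odd here), and your shift $\sigma\mapsto\sigma-k\xi_H$ is not a self-bijection of $H^{\perp}/H$: it lands in the nontrivial coset $\xi_H+H^{\perp}$ inside $H_0^{\perp}$. So after the shift you are summing $\mathbf{e}\bigl(l\tfrac{\tau^2}{2}\bigr)$ over that coset, which is not yet the Milgram sum for $A_0(l)=H_0^{\perp}/H_0$.

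The paper closes this gap with an extra ingredient you do not mention: one writes the Milgram sum for $A_0(l)$ as a sum over $H_0^{\perp}/H_0=H^{\perp}/H_0\,\cup\,(\xi_H+H^{\perp})/H_0$, observes that the sum over the first piece \emph{vanishes} (translation by any $\gamma\in H\setminus H_0$ flips the sign of every summand, since $l\tfrac{\gamma^2}{2}=\tfrac12$), and then identifies the sum over the second piece with twice the desired sum (each class in $H^{\perp}/H$ splits into two classes in $H^{\perp}/H_0$, with equal summands by well-definedness). Combining this with $\sqrt{|H_0^{\perp}/H_0|}=2\sqrt{|H^{\perp}/H|}$ gives the formula. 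Your approach can be completed along exactly these lines, but as written the odd-$l$ non-isotropic case is a genuine gap.
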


\begin{proof}
The invariance under changing $\sigma \in H^{\perp}$ by an element of $H$ follows directly from Lemma \ref{xilH}, making each summand well-defined.

When $H$ is isotropic or $l$ is even we have $\xi_{l,H} \in H^{\perp}$, and then writing the variable $\sigma$ as $\gamma+k\xi_{l,H}$ with $\gamma \in A:=H^{\perp}/H$ gives $\mathbf{e}\big(-k\frac{\xi_{l,H}^{2}}{2}\big)$ times the left hand side of Equation \eqref{Milgram}, with $D$ replaced by $A_{0}(l)$ (this is $A(l)$ if $H$ is isotropic). The result thus follows from Milgram's formula in this case.

If $H$ is quasi-isotropic and $l$ is odd, then we have $\frac{\gamma^{2}}{2}=\frac{1}{2}$ for any $\gamma \in H \setminus H_{0}$, so by fixing such an element we have $H=H_{0}\cup(\gamma+H_{0})$. Since Corollary \ref{isoinqiso} gives $H_{0}^{\perp}=H^{\perp}\cup(\xi_{H}+H^{\perp})$, $k$ must be odd as well, the non-trivial coset can be written as $k\xi_{H}+H^{\perp}$. The left hand side of Milgrams's formula for $A_{0}(l)$ can thus be written as the sum of $\sum_{\sigma \in H^{\perp}/H_{0}}\mathbf{e}\big(l\frac{\sigma^{2}}{2}\big)$ and $\sum_{\sigma \in H^{\perp}/H_{0}}\mathbf{e}\big(l\frac{(\sigma+k\xi_{H})^{2}}{2}\big)$. Now, replacing $\sigma$ by $\sigma+\gamma$ in the former sum inverts all the summands, which implies that this sum vanishes. Thus Equation \eqref{Milgram} compares $\mathbf{e}(\operatorname{sgn}A_{0}(l))\cdot\sqrt{|H_{0}^{\perp}/H_{0}|}$ with $\mathbf{e}\big(k\frac{\xi_{H}^{2}}{2}\big)$ times $\sum_{\sigma \in H^{\perp}/H_{0}}\mathbf{e}\big(l\frac{\sigma^{2}}{2}-(\sigma,\xi_{H})\big)$. But the well-definedness of the required sum means that it equals half of the latter sum, which gives the desired result by the index 2 property in Corollary \ref{isoinqiso} and the fact that $\xi_{l,H}=\xi_{H}$ in this setting. This proves the lemma.
\end{proof}

In this case the formulae from Lemmas \ref{rhoDaH} and \ref{actTKH} are complemented by the following evaluation.

\begin{prop}
Let $H$, $l$, $A_{0}(l)$, $\xi_{l,H}$, and $k$ be as in Lemma \ref{Milodd}, and take $\eta$ and $\lambda$ in $D$. Then $\rho_{D}(ST^{l})\mathfrak{a}_{\eta,\lambda}^{H^{\perp}}$ equals $\mathbf{e}\big(\frac{\operatorname{sgn}A_{0}(l)-\operatorname{sgn}D}{8}\big)$ times
\[\mathbf{e}\big[(kl-1)\big(l\tfrac{\lambda^{2}}{2}+(\lambda,\eta+\xi_{l,H})\big)+k\tfrac{\eta^{2}}{2}+k(\eta,\xi_{l,H})\big]\rho_{D}(T^{-k})\mathfrak{a}_{(kl-1)\lambda+k\eta,\eta+l\lambda+\xi_{l,H}}^{H^{\perp}}.\] \label{actofS}
\end{prop}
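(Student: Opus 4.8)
The plan is to expand both $\rho_{D}(ST^{l})\mathfrak{a}_{\eta,\lambda}^{H^{\perp}}$ and the asserted right-hand side as explicit linear combinations of the orthonormal vectors $\mathfrak{a}_{-\lambda,\nu+\rho}^{H}$, where $\nu:=\eta+l\lambda+\xi_{l,H}$ and $\rho$ ranges over a set $\mathfrak{R}$ of representatives for $H^{\perp}/H$, and then to compare coefficients; every sum over $\mathfrak{R}$ below is independent of the choice of $\mathfrak{R}$ by Lemma \ref{ortho}. For the left-hand side I first expand $\rho_{D}(T^{l})\mathfrak{a}_{\eta,\lambda}^{H^{\perp}}$ by the second formula of Lemma \ref{actTKH} (with $K=H^{\perp}$, dropping the $l\tau$ from the first index since $K\subseteq H^{\perp}$), and then apply $\rho_{D}(S)$ to each summand via Lemma \ref{rhoDaH}. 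Using bilinearity and that $(\tau,\gamma)=0$ for $\tau\in H^{\perp}$ and $\gamma\in H$, the accumulated phases collapse and one obtains
\[\rho_{D}(ST^{l})\mathfrak{a}_{\eta,\lambda}^{H^{\perp}}=\frac{\mathbf{e}(-\operatorname{sgn}D/8)\mathbf{e}\big(l\tfrac{\lambda^{2}}{2}-(\lambda,\nu)\big)}{\sqrt{|H^{\perp}/H|}}\sum_{\tau\in\mathfrak{R}}\mathbf{e}\big(l\tfrac{\tau^{2}}{2}-(\tau,\xi_{l,H})\big)\mathfrak{a}_{-\lambda-\tau,\nu}^{H^{\perp}}.\]
Expanding each $\mathfrak{a}_{-\lambda-\tau,\nu}^{H^{\perp}}$ by the first formula of Lemma \ref{actTKH} and reducing $-\lambda-\tau$ to $-\lambda$ modulo $H^{\perp}$, the coefficient of $\mathfrak{a}_{-\lambda,\nu+\rho}^{H}$ becomes $\frac{\mathbf{e}(-\operatorname{sgn}D/8)}{|H^{\perp}/H|}\mathbf{e}\big(l\tfrac{\lambda^{2}}{2}-(\lambda,\nu)-(\rho,\lambda)\big)$ times the twisted Gauss sum $S_{\rho}:=\sum_{\tau\in\mathfrak{R}}\mathbf{e}\big(l\tfrac{\tau^{2}}{2}-(\tau,\xi_{l,H}+\rho)\big)$, which is well-defined because $\rho\in H^{\perp}$.

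For the right-hand side I expand $\rho_{D}(T^{-k})\mathfrak{a}_{(kl-1)\lambda+k\eta,\nu}^{H^{\perp}}$ once more by the second formula of Lemma \ref{actTKH} with $K=H^{\perp}$ and power $-k$ (legitimate for any power, as $H$ is quasi-isotropic). A short computation gives $\big((kl-1)\lambda+k\eta\big)-k\nu=-\lambda-k\xi_{l,H}$, and moreover $\xi_{-k,H}-k\xi_{l,H}\in H^{\perp}$, because $(\gamma,\xi_{-k,H}-k\xi_{l,H})=-k(l+1)\tfrac{\gamma^{2}}{2}$ vanishes for $\gamma\in H$ (here one uses $\tfrac{\gamma^{2}}{2}\in\{0,\tfrac12\}$ on the quasi-isotropic $H$ together with the parity imposed on $k$). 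Hence the first index of every resulting vector reduces to $-\lambda$ modulo $H^{\perp}$, so the right-hand side is again a combination of the $\mathfrak{a}_{-\lambda,\nu+\rho}^{H}$, whose coefficients I read off directly from Lemma \ref{actTKH}.

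It then remains to evaluate $S_{\rho}$ and match the two coefficients. Since $kl\equiv1$ modulo the exponent of $A:=H^{\perp}/H$ — which is non-degenerate, and whose exponent divides that of the quadratic form on $H^{\perp}$ (in the non-isotropic even-$l$ case $|H^{\perp}/H|$ is odd, so the moduli built into the definition of $k$ still suffice) — one has $(kl-1)\rho\in H$, so the substitution $\tau\mapsto\tau+k\rho$ permutes $\mathfrak{R}$ modulo $H$ and completes the square: $S_{\rho}=\mathbf{e}\big((k^{2}l-2k)\tfrac{\rho^{2}}{2}-k(\rho,\xi_{l,H})\big)S_{0}$, while $S_{0}=\mathbf{e}\big(-k\tfrac{\xi_{l,H}^{2}}{2}\big)\mathbf{e}\big(\operatorname{sgn}A_{0}(l)/8\big)\sqrt{|H^{\perp}/H|}$ by Lemma \ref{Milodd}. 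Plugging this in and expanding $\tfrac{\nu^{2}}{2}$ and $(\lambda,\nu)$ by bilinearity, the required equality of the two coefficients of $\mathfrak{a}_{-\lambda,\nu+\rho}^{H}$ reduces, after cancelling the common factor $\mathbf{e}\big((\operatorname{sgn}A_{0}(l)-\operatorname{sgn}D)/8\big)/\sqrt{|H^{\perp}/H|}$ together with $\mathbf{e}\big(-(\rho,\lambda)-k(\rho,\xi_{l,H})\big)$, to two arithmetic facts: the $\rho$-independent terms collapse to $-k\tfrac{\xi_{l,H}^{2}}{2}$ on each side, and the $\rho$-dependent discrepancy is exactly $k(kl-1)\tfrac{\rho^{2}}{2}$, which lies in $\mathbb{Z}$ because $(kl-1)\tfrac{\rho^{2}}{2}$ already does.

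I expect the main obstacle to be this phase bookkeeping together with the arithmetic of $k$: one must carry the accumulated roots of unity through three applications of Lemmas \ref{rhoDaH} and \ref{actTKH} and two passages between the $\mathfrak{a}^{H^{\perp}}$- and $\mathfrak{a}^{H}$-bases, and — more delicately — check that the congruences imposed on $k$ really suffice, with separate attention to the case where $H$ is not isotropic and $l$ is even (where those congruences hold only modulo odd parts and $k$ is taken even, so the parity-sensitive steps $\xi_{-k,H}\equiv k\xi_{l,H}\pmod{H^{\perp}}$ and $k(kl-1)\tfrac{\rho^{2}}{2}\in\mathbb{Z}$ must be verified by hand, using that $|H^{\perp}/H|$ is odd there and that $\tfrac{\rho^{2}}{2}$ then has $2$-power denominator at most $2$). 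Everything else is routine manipulation with the bilinear form, Lemma \ref{ortho}, and the well-definedness already recorded in Lemma \ref{Milodd}.
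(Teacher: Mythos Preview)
Your approach is correct and is essentially the same as the paper's: both expand $\rho_{D}(ST^{l})\mathfrak{a}_{\eta,\lambda}^{H^{\perp}}$ via Lemma~\ref{actTKH} followed by Lemma~\ref{rhoDaH}, pass to the $\mathfrak{a}^{H}$-basis, complete the square in the resulting Gauss sum by the shift $\tau\mapsto\tau+k\rho$, and invoke Lemma~\ref{Milodd}. The only organizational difference is that the paper then recognizes the remaining sum over $\rho$ directly as $\sqrt{|H^{\perp}/H|}\,\mathbf{e}\big(k\tfrac{\alpha^{2}}{2}\big)\rho_{D}(T^{-k})\mathfrak{a}_{\beta,\alpha}^{H^{\perp}}$ (reading Lemma~\ref{actTKH} backwards), whereas you expand the asserted right-hand side by Lemma~\ref{actTKH} as well and match coefficients in the $\mathfrak{a}^{H}$-basis; these are inverse presentations of the same identity. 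Your handling of the delicate points (the congruence $\xi_{-k,H}\equiv k\xi_{l,H}\pmod{H^{\perp}}$ via the parity of $k(l+1)$, and the vanishing of $k(kl-1)\tfrac{\rho^{2}}{2}$ in the non-isotropic even-$l$ case using that $|H^{\perp}/H|$ is odd there) is correct and in fact makes explicit a step the paper leaves implicit. One small imprecision: the $\rho$-independent phases on the two sides do not literally ``collapse to $-k\tfrac{\xi_{l,H}^{2}}{2}$'' but rather to the common value $-l\tfrac{\lambda^{2}}{2}-(\lambda,\eta+\xi_{l,H})-k\tfrac{\xi_{l,H}^{2}}{2}$; the match is nonetheless exact.
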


\begin{proof}
Taking the second formula from Lemma \ref{actTKH} with $K=H^{\perp}$ and applying $\rho_{D}(S)$ presents, via Lemma \ref{rhoDaH}, the vector $\rho_{D}(ST^{l})\mathfrak{a}_{\eta,\lambda}^{H^{\perp}}$ as \[\frac{\mathbf{e}(-\operatorname{sgn}D/8)}{\sqrt{|H^{\perp}/H|}}\mathbf{e}\big(-l\tfrac{\lambda^{2}}{2}-(\lambda,\eta+\xi_{l,H})\big)\sum_{\tau\in\mathfrak{R}}\mathbf{e}\big(-(\tau,\xi_{l,H})+l\tfrac{\tau^{2}}{2}\big) \mathfrak{a}_{-\lambda-\tau,\eta+l\lambda+\xi_{l,H}}^{H^{\perp}}.\] Using the first formula from Lemma \ref{actTKH}, the sum over $\tau$ becomes \[\frac{1}{\sqrt{|H^{\perp}/H|}}\sum_{\rho\in\mathfrak{R}}\mathbf{e}\big(-(\lambda,\rho)\big)\Bigg[\sum_{\tau\in\mathfrak{R}}\mathbf{e}\big(l\tfrac{\tau^{2}}{2}-(\tau,\rho+\xi_{l,H})\big)\Bigg] \mathfrak{a}_{-\lambda,\eta+l\lambda+\xi_{l,H}+\rho}^{H},\] where Lemma \ref{ortho} allowed us to ignore $\tau \in H^{\perp}$ in the first index before interchanging the summation order.

Now, we may replace $\mathfrak{R}$ by $H^{\perp}/H$ in the internal sum, and writing $\tau$ as $\sigma+k\rho$ with $\sigma \in H^{\perp}/H$ transforms the latter sum into $\mathbf{e}\big(-k\tfrac{\rho^{2}}{2}-k(\rho,\xi_{l,H})\big)$ times the expression from Lemma \ref{Milodd}. The considerations from the proof of that lemma thus present $\rho_{D}(ST^{l})\mathfrak{a}_{\eta,\lambda}^{H^{\perp}}$ as $\mathbf{e}\big(\frac{\operatorname{sgn}A_{0}(l)-\operatorname{sgn}D}{8}\big)\big/\sqrt{|H^{\perp}/H|}$ times \[\mathbf{e}\big(-l\tfrac{\lambda^{2}}{2}-(\lambda,\eta+\xi_{l,H})-k\tfrac{\xi_{l,H}^{2}}{2}\big)\sum_{\rho\in\mathfrak{R}}\mathbf{e}\big(-(\rho,\lambda+k\xi_{l,H})-k\tfrac{\rho^{2}}{2}\big) \mathfrak{a}_{-\lambda,\eta+l\lambda+\xi_{l,H}+\rho}^{H}.\] Lemma \ref{ortho} allows us to add $(1-k)\xi_{l,H} \in H^{\perp}$ to $-\lambda$ (note that $(1-k)\xi_{l,H} \in H^{\perp}$ also when $\xi_{l,H} \notin H^{\perp}$ since then $k$ is odd), and since the resulting expression equals $\beta-k\alpha+\xi_{l,H}$ for $\beta:=(kl-1)\lambda+k\eta$ and $\alpha:=\eta+l\lambda+\xi_{l,H}$, and $-\lambda-k\xi_{l,H}$ equals $\beta-k\alpha$, Lemma \ref{actTKH} shows that the sum over $\mathfrak{R}$ here equals $\sqrt{|H^{\perp}/H|}\mathbf{e}\big(k\tfrac{\alpha^{2}}{2}\big)$ times $\rho_{D}(T^{-k})\mathfrak{a}_{\beta,\alpha}^{H^{\perp}}$. Substituting the value of $\alpha$ and simple algebra now produces the desired result.
\end{proof}

\begin{rmk}
Proposition \ref{propsym} can be extended, with a similar proof, to show that the formulae from Proposition \ref{actofS} are also valid for the vectors from Equations \eqref{withaut} and \eqref{symvec}. \label{rhoSsym}
\end{rmk}

\smallskip

We shall be using these expressions in the case where $H$ is a quasi-isotropic subgroup of $D$ such that the quotient $H^{\perp}/H$ has prime exponent $p$. We shall need the following technical lemma.
\begin{lem}
Let $H_{0}$ be the subgroup from Lemma \ref{xiHdef}. If $H^{\perp}/H$ has prime exponent $p$ then there exists a subgroup $\tilde{H}^{\perp}$ of $H^{\perp}$ such that $\tilde{H}^{\perp}/H_{0}$ is a complement of $H/H_{0}$ inside the finer quotient $H^{\perp}/H_{0}$. \label{liftmodH0}
\end{lem}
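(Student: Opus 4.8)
The plan is to recast the statement as the existence of a complement to the subgroup $H/H_{0}$ inside the abelian group $H^{\perp}/H_{0}$: by the subgroup correspondence for the projection $H^{\perp}\twoheadrightarrow H^{\perp}/H_{0}$, pulling back such a complement produces a subgroup $\tilde{H}^{\perp}\supseteq H_{0}$ with $\tilde{H}^{\perp}/H_{0}$ the desired complement, and conversely every such $\tilde{H}^{\perp}$ arises this way. By Corollary \ref{isoinqiso}, when $H$ is isotropic we have $H_{0}=H$ and one may simply take $\tilde{H}^{\perp}=H^{\perp}$; so I assume henceforth that $H$ is not isotropic, in which case Corollary \ref{isoinqiso}, together with Lemma \ref{xiHdef}, tells us that $H/H_{0}$ has order $2$ and that $H_{0}=\{h\in H:\tfrac{h^{2}}{2}=0\}$. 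It thus suffices to exhibit a complement to this order-$2$ subgroup.

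For odd $p$ the quadratic form plays no role. Since $H^{\perp}/H$ has exponent $p$ it is an elementary abelian $p$-group, so $H^{\perp}/H_{0}$ has order $2p^{m}$ for some $m$; its unique $2$-Sylow subgroup then has order $2$ and therefore coincides with $H/H_{0}$, which is consequently a direct factor by the primary decomposition of finite abelian groups. Taking $\tilde{H}^{\perp}$ to be the preimage in $H^{\perp}$ of the $p$-part of $H^{\perp}/H_{0}$ settles this case.

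For $p=2$ the plan is to prove the stronger assertion that $H^{\perp}/H_{0}$ is an elementary abelian $2$-group; then $H/H_{0}$, being a subspace of an $\mathbb{F}_{2}$-vector space, automatically has a complement, and one takes $\tilde{H}^{\perp}$ to be the preimage of any such complement. To see the assertion, let $x\in H^{\perp}$. Since $H^{\perp}/H$ has exponent $2$ we get $2x\in H$, and a short computation using the identity $(\gamma,\delta)=\tfrac{(\gamma+\delta)^{2}}{2}-\tfrac{\gamma^{2}}{2}-\tfrac{\delta^{2}}{2}$ and the homogeneity $\tfrac{(n\gamma)^{2}}{2}=n^{2}\cdot\tfrac{\gamma^{2}}{2}$ of the quadratic form yields $\tfrac{(2x)^{2}}{2}=(x,2x)$; the right-hand side vanishes because $x\in H^{\perp}$ and $2x\in H$. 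Hence $2x$ lies in $H$ and satisfies $\tfrac{(2x)^{2}}{2}=0$, that is, $2x\in H_{0}$, so every element of $H^{\perp}/H_{0}$ has order dividing $2$, as needed.

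I expect the only delicate point to be this prime-$2$ computation, where one has to keep track of the quadratic form, its associated bilinear form, and the description of $H_{0}$ simultaneously; and it is precisely the hypothesis that $H^{\perp}/H$ has \emph{prime} exponent that forces $2x\in H$ and thereby lets the pairing $(x,2x)$ be read off and shown to vanish. Everything else is the correspondence theorem and the classification of finite abelian groups, so I do not foresee a genuine obstacle.
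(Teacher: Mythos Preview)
Your proof is correct and follows essentially the same route as the paper's: for odd $p$ you take the odd-order part of $H^{\perp}/H_{0}$ as the complement, and for $p=2$ you show that $H^{\perp}/H_{0}$ is an elementary abelian $2$-group, whence any subgroup splits off. The only cosmetic difference is that in the $p=2$ case the paper reaches $2x\in H_{0}$ via Lemma~\ref{xilH} (observing that the bilinear form on $H^{\perp}$ takes values in $\tfrac{1}{2}\mathbb{Z}/\mathbb{Z}$, hence $\tfrac{x^{2}}{2}\in\tfrac{1}{4}\mathbb{Z}/\mathbb{Z}$), whereas you compute $\tfrac{(2x)^{2}}{2}=(x,2x)=0$ directly; the two arguments are equivalent.
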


\begin{proof}
If $p$ is odd then the fact that $H/H_{0}$ has order dividing 2 determines $\tilde{H}^{\perp}/H_{0}$ as the subgroup containing those elements of $H^{\perp}/H_{0}$ whose order is odd. For $p=2$ we note that $2(\gamma,\delta)$ vanishes for every $\gamma$ and $\delta$ in $H^{\perp}$ (since $2\delta \in H$), implying that $H^{\perp}$ and 2 satisfy the condition from Lemma \ref{xilH}. By this means that $\frac{\gamma^{2}}{2}\in\frac{1}{4}\mathbb{Z}\big/\mathbb{Z}$ for every $\gamma \in H^{\perp}$, and therefore $2\gamma$ lies in the subgroup $H_{0}$ from Lemma \ref{xiHdef}. Hence $H^{\perp}/H_{0}$ also has exponent 2, and every subgroup in it has a complement. This proves the lemma.
\end{proof}

\begin{cor}
There exists a vector $\xi_{H,\tilde{H}} \in H_{0}^{\perp}$ which pairs trivially with $\tilde{H}^{\perp}/H_{0}$, but not trivially with $H/H_{0}$ in case the latter group is non-trivial. This vector is uniquely determined modulo $H$, has order at most 2 in $D/H$, and the order of $\frac{\xi_{H,\tilde{H}}^{2}}{2}$ in $\mathbb{Q}/\mathbb{Z}$ divides 8. \label{veccomp}
\end{cor}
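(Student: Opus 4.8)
The plan is to construct $\xi_{H,\tilde H}$ as a preimage, under the duality between $H_0^\perp/H_0$ and $(H^\perp/H_0)$-dual, of a suitable functional on $H^\perp/H_0$. Concretely: since $H_0$ is isotropic, $H_0^\perp/H_0$ is a non-degenerate discriminant form, and the pairing identifies it with its own $\mathbb{Q}/\mathbb{Z}$-dual. Inside $H_0^\perp/H_0$ sit the two subgroups $\tilde H^\perp/H_0$ and $H/H_0$, which by Lemma \ref{liftmodH0} are complementary inside $H^\perp/H_0$. First I would use non-degeneracy to pick an element $\bar\xi \in H_0^\perp/H_0$ that is orthogonal to $\tilde H^\perp/H_0$ but pairs non-trivially with $H/H_0$ whenever $H/H_0$ is non-trivial (i.e.\ whenever $H$ is not isotropic); this is possible because the pairing restricted to $H^\perp/H_0$ is non-degenerate and $\tilde H^\perp/H_0$ is a proper complement. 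Then I lift $\bar\xi$ to a vector $\xi_{H,\tilde H} \in H_0^\perp \subseteq D$; the coset modulo $H_0$ is determined, and since the further ambiguity of adding $H/H_0$ only shifts by elements of $H$, the class of $\xi_{H,\tilde H}$ in $D/H$ is well-defined.

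Next I would verify the order claims. That $\xi_{H,\tilde H}$ has order at most $2$ in $D/H$: when $H$ is isotropic, $H=H_0$, the functional is trivial on all of $H^\perp/H_0$, so $\bar\xi$ can be taken in $(H^\perp/H_0)^\perp$; one checks this forces $\xi_{H,\tilde H}\in H^\perp = H$ — wait, more carefully, when $H$ is isotropic the condition is vacuous and one may simply take $\xi_{H,\tilde H}=0$. When $H$ is not isotropic, $H/H_0$ has order $2$ by Corollary \ref{isoinqiso}, so the non-trivial pairing value with the order-$2$ generator $\tau$ of $H/H_0$ is $\tfrac12 + \mathbb{Z}$; then $2\xi_{H,\tilde H}$ pairs trivially with all of $H^\perp/H_0$, hence $2\xi_{H,\tilde H}$ lies in the annihilator, which modulo $H$ means $2\xi_{H,\tilde H} \in H$, giving order dividing $2$ in $D/H$. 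For the quadratic value: $\xi_{H,\tilde H}$ lies in $H_0^\perp$, and the bilinear form on $H_0^\perp$ takes values that are controlled — since $H^\perp/H_0$ has exponent $p$ (for $p=2$, exponent $2$ as shown in the proof of Lemma \ref{liftmodH0}), and $\xi_{H,\tilde H}$ pairs into $\tfrac1p\mathbb{Z}/\mathbb{Z}$ or $\tfrac12\mathbb{Z}/\mathbb{Z}$ with the relevant groups; more to the point, $2\xi_{H,\tilde H}\in H \subseteq H^\perp$ so $\tfrac{(2\xi_{H,\tilde H})^2}{2} = (2\xi_{H,\tilde H}, \xi_{H,\tilde H})$ lies in the image of pairing $H$ with $H_0^\perp$, which is $\tfrac12\mathbb{Z}/\mathbb{Z}$ since $H\subseteq H_0^\perp$ and $H/H_0$ has order $2$. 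Hence $4\frac{\xi_{H,\tilde H}^2}{2} \in \tfrac12\mathbb{Z}/\mathbb{Z}$, i.e.\ $8\frac{\xi_{H,\tilde H}^2}{2}\in\mathbb{Z}$, so the order of $\frac{\xi_{H,\tilde H}^2}{2}$ divides $8$.

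The main obstacle I anticipate is the case $p=2$: there the distinction between $H^\perp/H_0$ having exponent $2$ versus $4$, and the interplay between the quadratic form and the bilinear form on the $2$-group $H_0^\perp/H_0$, must be handled carefully — in particular one must make sure the element $\bar\xi$ can be chosen so that $\frac{\xi_{H,\tilde H}^2}{2}$ genuinely has denominator dividing $8$ and not something larger, which is exactly where the fact from the proof of Lemma \ref{liftmodH0} that $\frac{\gamma^2}{2}\in\tfrac14\mathbb{Z}/\mathbb{Z}$ for $\gamma\in H^\perp$ gets used, combined with $\xi_{H,\tilde H}\in H_0^\perp$ rather than $H^\perp$. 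The odd-$p$ case is comparatively routine, since there $H=H_0$ automatically (a quasi-isotropic $H$ with $H/H_0$ of order dividing $2$ inside a $p$-group for odd $p$ forces $H=H_0$), so $\xi_{H,\tilde H}$ may be taken to be $0$ and every assertion is trivial; the substance is entirely in $p=2$.
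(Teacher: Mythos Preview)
Your core argument is correct and matches the paper's (very terse) proof: construct $\xi_{H,\tilde H}$ via duality from the complement $\tilde H^\perp/H_0$, then observe that $2\xi_{H,\tilde H}$ annihilates all of $H^\perp$ and hence lies in $H$, from which both the order-$2$ claim and the divisibility-by-$8$ claim follow (the latter via $(2\xi_{H,\tilde H})^2/2 \in \tfrac12\mathbb{Z}/\mathbb{Z}$ since $2\xi_{H,\tilde H}$ is in the quasi-isotropic group $H$). Your computation $(2\xi)^2/2 = (2\xi,\xi) = 4\cdot\tfrac{\xi^2}{2}$ is exactly what the paper leaves implicit.

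One correction to your final paragraph: the claim that for odd $p$ one automatically has $H=H_0$ is not true in the generality of the corollary. The only standing hypothesis is that $H^\perp/H$ has exponent $p$; nothing forces $D$ (or $H$) to be a $p$-group, so $H/H_0$ can perfectly well have order $2$ when $p$ is odd (indeed, the proof of Lemma~\ref{liftmodH0} treats exactly this situation). Fortunately this error is confined to your speculative ``obstacles'' discussion and does not touch the main argument, which already handles all primes uniformly: you never used the parity of $p$ in deducing $2\xi_{H,\tilde H}\in H$ or the bound on the denominator of $\tfrac{\xi_{H,\tilde H}^2}{2}$. So simply drop that last paragraph, and tighten the uniqueness step (the coset of $\bar\xi$ modulo $H_0$ is not determined---there are two choices when $H\neq H_0$---but they differ by an element of $H/H_0$, giving uniqueness modulo $H$, as you in effect say).
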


\begin{proof}
The first statement follows directly from Lemma \ref{liftmodH0}, and the second one as in Lemma \ref{xilH}. Next we note that $2\xi_{H,\tilde{H}}$ is perpendicular to $H^{\perp}$ hence lies in $H$, and the remaining assertions follows. This proves the corollary.
\end{proof}
It follows immediately from Corollary \ref{veccomp} that the vector $\xi_{H,\tilde{H}}$ can serve as a representative for $\xi_{H}$ from Lemma \ref{xiHdef}. Moreover, given $l\in\mathbb{Z}$, we recall from that Lemma that $\xi_{l,H}$ vanishes if $l$ is even and equals $\xi_{H}$ for odd $l$.  Using the third assertion in that corollary, we therefore set $\xi_{l,H,\tilde{H}}$ to be 0 in case $l$ is even and $\xi_{H,\tilde{H}}$ when $l$ is odd. We remark that if $p$ is odd then $\xi_{l,H,\tilde{H}}$ is simply $\xi_{pl,H^{\perp}}$ from Lemma \ref{xiHdef}, but this is not true when $p=2$.

Assume thus that $H$ is such a subgroup, fix $\tilde{H}^{\perp}$ as in Lemma \ref{liftmodH0}, and assume that $J$ is subgroup of $H$, of order $p$. Then the $\mathbb{Q}/\mathbb{Z}$-dual $D/J^{\perp}$ of $J$ is also cyclic of order $p$, and the image of any element of $D \setminus J^{\perp}$ generates it. It follows that given two elements $\eta$ and $\lambda$ in $D$, if $\lambda \notin J^{\perp}$, then there exists $l\in\mathbb{Z}$ such that $\eta-l\lambda \in J^{\perp}$. The subgroup $\tilde{H}^{\perp}$ thus produces the vector $\xi_{l,H,\tilde{H}}$. Denote $A=H^{\perp}/H$ (also when $H$ is not isotropic), and we set
\begin{equation}
\mathfrak{b}_{\eta,\lambda}^{H,J}:=\begin{cases} \frac{1}{\sqrt{|A|}}\sum_{\tau \in A}\mathbf{e}\big((\tau,\eta-\xi_{l,H,\tilde{H}})+l\frac{\tau^{2}}{2}\big)\mathfrak{a}_{\eta,\lambda+\tau}^{H}, & \mathrm{if\ }\lambda \notin J^{\perp}, \\ \mathfrak{a}_{\eta,\lambda}^{H}, & \mathrm{for\ }\lambda \in J^{\perp},\end{cases} \label{bHJvec}
\end{equation}
where we could take $\tau$ in $A$ rather than in a representing set by Lemmas \ref{ortho} and \ref{xilH}. It is clear that for fixed $l$ the expression from Equation \eqref{bHJvec} depends only on the class of $\xi_{l,H,\tilde{H}}$ in $D/H$, and for showing that it is well-defined as an formula of $\eta$ and $\lambda$ alone, we prove the following lemma.
\begin{lem}
Given two elements $\eta \in D$ and $\lambda \in D \setminus J^{\perp}$, the vector $\mathfrak{b}_{\eta,\lambda}^{H,J}$ equals $\mathbf{e}\big(-l\tfrac{\lambda^{2}}{2}\big)\rho_{D}(T^{l})\mathfrak{a}_{\eta-l\lambda-\xi_{l,H,\tilde{H}},\lambda}^{H^{\perp}}$, and thus depends on $l$ only modulo $p$. \label{bwelldef}
\end{lem}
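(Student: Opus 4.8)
The plan is to obtain both assertions from the two formulae of Lemma~\ref{actTKH}, applied with the larger subgroup taken to be $H^{\perp}$ itself (legitimate, since $H$ is quasi-isotropic, so $H\subseteq H^{\perp}$).

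For the first assertion, I would apply the second formula of Lemma~\ref{actTKH} with $K=H^{\perp}$, lower index $\lambda$, and upper index $\eta-l\lambda-\xi_{l,H,\tilde{H}}$ in place of $\eta$; since $K=H^{\perp}\subseteq H^{\perp}$, we are in the case in which $l\tau$ may be omitted from the first index. Writing $\eta'=\eta-l\lambda-\xi_{l,H,\tilde{H}}$, one has $\eta'+l\lambda=\eta-\xi_{l,H,\tilde{H}}$, so the exponent $(\tau,\eta'+l\lambda)+l\tfrac{\tau^{2}}{2}$ is exactly the phase occurring in Equation~\eqref{bHJvec}, while the first index $\eta'+l\lambda+\xi_{l,H}=\eta-\xi_{l,H,\tilde{H}}+\xi_{l,H}$ is congruent to $\eta$ modulo $H^{\perp}$, since $\xi_{l,H,\tilde{H}}$ represents $\xi_{l,H}$ (Corollary~\ref{veccomp} and the remark after it); thus Lemma~\ref{ortho} lets us replace $\mathfrak{a}_{\eta'+l\lambda+\xi_{l,H},\lambda+\tau}^{H}$ by $\mathfrak{a}_{\eta,\lambda+\tau}^{H}$. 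Using $|H^{\perp}/H|=|A|$ and cancelling the outer factor $\mathbf{e}\big(l\tfrac{\lambda^{2}}{2}\big)$ against the $\mathbf{e}\big(-l\tfrac{\lambda^{2}}{2}\big)$ in front yields precisely the right-hand side of Equation~\eqref{bHJvec}, which is the first claim.

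For the second assertion, since any two admissible values of $l$ differ by a multiple of $p$, it suffices to compare $l$ with $l+p$. The crucial observation is that $p$ plays for $H^{\perp}$ the role that $l$ plays for $H$ in Lemma~\ref{xilH}: because $H^{\perp}/H$ has exponent $p$, every $\gamma\in H^{\perp}$ has $p\gamma\in H$, so $p(\gamma,\delta)=(p\gamma,\delta)=0$ for all $\gamma,\delta\in H^{\perp}$, and Lemma~\ref{xilH} produces a vector $\xi_{p,H^{\perp}}$ (unique modulo $(H^{\perp})^{\perp}=H$), while Lemma~\ref{rhoDaH} gives $\rho_{D}(T^{p})\mathfrak{a}_{\nu,\lambda}^{H^{\perp}}=\mathbf{e}\big(p\tfrac{\lambda^{2}}{2}\big)\mathfrak{a}_{\nu+p\lambda+\xi_{p,H^{\perp}},\lambda}^{H^{\perp}}$. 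Feeding this into $\rho_{D}(T^{l+p})=\rho_{D}(T^{l})\rho_{D}(T^{p})$, the phases $\mathbf{e}\big(\pm p\tfrac{\lambda^{2}}{2}\big)$ cancel and the expression of the lemma for $l+p$ becomes the one for $l$ with the upper index shifted by $\xi_{p,H^{\perp}}-\big(\xi_{l+p,H,\tilde{H}}-\xi_{l,H,\tilde{H}}\big)$; since that index matters only modulo $H$ (Lemma~\ref{ortho}), everything reduces to the congruence $\xi_{p,H^{\perp}}\equiv\xi_{l+p,H,\tilde{H}}-\xi_{l,H,\tilde{H}}\pmod{H}$. For $p$ odd I would verify this with the decomposition $H^{\perp}/H_{0}=(\tilde{H}^{\perp}/H_{0})\oplus(H/H_{0})$ of Lemma~\ref{liftmodH0}: on $\tilde{H}^{\perp}$ one gets $p\tfrac{\gamma^{2}}{2}=0$ (as $(\gamma,\gamma)\in\tfrac{1}{p}\mathbb{Z}/\mathbb{Z}$ and $2$ is invertible modulo $p$), while the $H$-component contributes $(\gamma,\xi_{H,\tilde{H}})$, so $\xi_{p,H^{\perp}}\equiv\xi_{H,\tilde{H}}\pmod{H}$; and since $\xi_{l,H,\tilde{H}}$ runs through $0$ and $\xi_{H,\tilde{H}}$ according to the parity of $l$ and $2\xi_{H,\tilde{H}}\in H$, the two sides agree.

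The case $p=2$ of the second assertion is the point I expect to be the real work. Here $l$ and $l+2$ have the same parity, so the $\xi$-terms cancel and one is reduced to handling $\xi_{2,H^{\perp}}$ modulo $H$ on its own; the odd-case argument does not apply directly, because $\tfrac{\gamma^{2}}{2}$ need only lie in $\tfrac{1}{4}\mathbb{Z}/\mathbb{Z}$ on $H^{\perp}$ rather than in $\tfrac{1}{2}\mathbb{Z}/\mathbb{Z}$ (cf.\ the proof of Lemma~\ref{liftmodH0}). The way I would try to push through is to exploit the finer structure of $H^{\perp}/H_{0}$ and $\tilde{H}^{\perp}$ from Lemma~\ref{liftmodH0} and Corollary~\ref{veccomp} — in particular that $2\gamma\in H_{0}$ for every $\gamma\in H^{\perp}$ and the defining properties of $\xi_{H,\tilde{H}}$ relative to $\tilde{H}^{\perp}$ — and to take the representatives $\tau$ of $A$ in Equation~\eqref{bHJvec} inside $\tilde{H}^{\perp}$, which removes the $\xi_{l,H,\tilde{H}}$-contribution from each summand and isolates the dependence on $l\tfrac{\tau^{2}}{2}$ as the quantity that must be pinned down.
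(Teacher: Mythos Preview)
Your proof of the first assertion is correct and coincides with the paper's: both apply the second formula of Lemma~\ref{actTKH} with $K=H^{\perp}$ and read off Equation~\eqref{bHJvec}.

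For the second assertion your strategy---factor $\rho_{D}(T^{l+np})=\rho_{D}(T^{l})\rho_{D}(T^{np})$ and invoke Lemma~\ref{rhoDaH}/Remark~\ref{actindvec} for $H^{\perp}$ with exponent $np$---is precisely the paper's (it cites Remark~\ref{actindvec}); both reduce to the congruence $\xi_{np,H^{\perp}}\equiv \xi_{l+np,H,\tilde H}-\xi_{l,H,\tilde H}\pmod{H}$. Your verification for odd $p$ is essentially right, but the step ``$p\tfrac{\gamma^{2}}{2}=0$ on $\tilde H^{\perp}$'' needs one more ingredient than you record: from $(\gamma,\gamma)\in\tfrac{1}{p}\mathbb{Z}/\mathbb{Z}$ alone you only get that the denominator of $\tfrac{\gamma^{2}}{2}$ divides $2p$; you must also use that $p\gamma\in H_{0}$ on $\tilde H^{\perp}$ (true for odd $p$ by Lemma~\ref{liftmodH0}), so the denominator divides $p^{2}$, hence $p$.

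Your caution at $p=2$ is justified, and the difficulty is not merely ``real work'' but a genuine obstruction. Here $l$ and $l+2$ have the same parity, so the right-hand side of your congruence is $0$, and you need $\xi_{2,H^{\perp}}\in H$, i.e.\ $(\gamma,\gamma)=0$ for every $\gamma\in H^{\perp}$. The proof of Lemma~\ref{liftmodH0} only guarantees $\tfrac{\gamma^{2}}{2}\in\tfrac{1}{4}\mathbb{Z}/\mathbb{Z}$, and this can fail: take $D=2^{+1}_{1}\oplus 2^{+2}_{II}$, $H=J=\langle e\rangle$ the isotropic generator of the hyperbolic part, so $A=H^{\perp}/H\cong 2^{+1}_{1}$ with nonzero class represented by $\gamma$ satisfying $\tfrac{\gamma^{2}}{2}=\tfrac{1}{4}$. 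With $\eta=\gamma$, $\lambda=f$ one may take $l=0$ or $l=2$, and the $\tau=\gamma$ summand in Equation~\eqref{bHJvec} picks up $\mathbf{e}(2\cdot\tfrac{1}{4})=-1$ in passing from $l=0$ to $l=2$. Your proposed remedy of choosing representatives in $\tilde H^{\perp}$ does not help here, since $H$ is isotropic and $\tilde H^{\perp}=H^{\perp}$. The paper's one-line appeal to Remark~\ref{actindvec} and Corollary~\ref{veccomp} does not cover this case either (note its explicit caveat after Corollary~\ref{veccomp} that $\xi_{l,H,\tilde H}=\xi_{pl,H^{\perp}}$ fails for $p=2$). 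So at $p=2$ the claim ``depends on $l$ only modulo $p$'' requires an additional hypothesis on $A$ (even type, so that $(\gamma,\gamma)=0$ on $H^{\perp}$) or a modified normalization of $\mathfrak{b}_{\eta,\lambda}^{H,J}$; you have located a gap rather than failed to fill one.
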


\begin{proof}
The expression for $\mathfrak{b}_{\eta,\lambda}^{H,J}$ is a consequence of Lemma \ref{actTKH} and Equation \eqref{bHJvec}. The invariance under changing $l$ by a multiple of $p$ follows from Remark \ref{actindvec} and the fact that the difference between $\xi_{l,H,\tilde{H}}$ and $\xi_{l+np,H,\tilde{H}}$ is just $\xi_{np,H,\tilde{H}}$ from Corollary \ref{veccomp}. This proves the lemma.
\end{proof}
From Lemma \ref{bwelldef} we also obtain the following analogue of Lemma \ref{ortho}.
\begin{cor}
For $\lambda \in J^{\perp}$, the vector $\mathfrak{b}_{\eta,\lambda}^{H,J}$ from Equation \eqref{bHJvec} depends only on the image of $\eta$ modulo $H^{\perp}$, while adding $\delta \in H \subseteq J^{\perp}$ to $\lambda$ multiplies it by $\mathbf{e}\big(-(\delta,\eta)\big)$. On the other hand, if $\lambda \notin J^{\perp}$ then this vector depends on the image of $\eta$ in $D/H$, and for $\delta \in H^{\perp}$ we have $\mathfrak{b}_{\eta,\lambda+\delta}^{H,J}=\mathbf{e}\big(l\frac{\delta^{2}}{2}-(\delta,\eta-\xi_{l,H,\tilde{H}})\big)\mathfrak{b}_{\eta-l\delta,\lambda}^{H,J}$ with $l$ as in Equation \eqref{bHJvec}. Two vectors $\mathfrak{b}_{\eta,\lambda}^{H,J}$ and $\mathfrak{b}_{\kappa,\nu}^{H,J}$ with indices that are not related by these transformations are orthogonal, and each $\mathfrak{b}_{\eta,\lambda}^{H,J}$ has norm 1. \label{basisTl}
\end{cor}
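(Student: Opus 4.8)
The plan is to reduce everything to an explicit description of $\mathfrak{b}_{\eta,\lambda}^{H,J}$ in the natural basis $\{\mathfrak{e}_{\gamma}\}$. For $\lambda\in J^{\perp}$ the vector is literally $\mathfrak{a}_{\eta,\lambda}^{H}$, so the two assertions in that case are immediate from Lemma \ref{ortho}. For $\lambda\notin J^{\perp}$ I would first rewrite Equation \eqref{bHJvec}: inserting the definition of $\mathfrak{a}_{\eta,\lambda+\tau}^{H}$ from \eqref{vecH} and splitting a general $\gamma\in H^{\perp}$ as $\tau+h$ with $h\in H$, the relations $(\tau,h)=0$ for $\tau\in H^{\perp}$, $\frac{h^{2}}{2}=(h,\xi_{H})$ on $H$ (Lemma \ref{xiHdef}), and $(h,\xi_{l,H,\tilde{H}})=l\frac{h^{2}}{2}$ collapse the double sum to
\[\mathfrak{b}_{\eta,\lambda}^{H,J}=\tfrac{1}{\sqrt{|H^{\perp}|}}\sum_{\gamma\in H^{\perp}}\mathbf{e}\big((\gamma,\eta-\xi_{l,H,\tilde{H}})+l\tfrac{\gamma^{2}}{2}\big)\mathfrak{e}_{\lambda+\gamma},\]
which agrees with what Lemma \ref{bwelldef} gives once the diagonal action of $\rho_{D}(T)$ is unfolded. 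From this expansion the remaining claims for $\lambda\notin J^{\perp}$ should follow by inspection: the coefficients are unchanged when $\eta$ is replaced by $\eta+h$ with $h\in H$ (and the residue of $l$ modulo $p$ is unchanged too), so the vector depends only on $\eta+H$; re-indexing $\gamma\mapsto\gamma+\delta$ with $\delta\in H^{\perp}$ rewrites the expansion for $\lambda+\delta$ as $\mathbf{e}\big(l\frac{\delta^{2}}{2}-(\delta,\eta-\xi_{l,H,\tilde{H}})\big)$ times the one for $(\eta-l\delta,\lambda)$, which is the stated transformation rule; and the vector is supported on the single coset $\lambda+H^{\perp}$ with all coefficients of modulus $1/\sqrt{|H^{\perp}|}$, hence of norm $1$ (for $\lambda\in J^{\perp}$, norm $1$ is again Lemma \ref{ortho}).

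Next I would address orthogonality. Since $\mathfrak{b}_{\eta,\lambda}^{H,J}$ is supported on $\lambda+H$ when $\lambda\in J^{\perp}$ and on $\lambda+H^{\perp}$ otherwise, and $H^{\perp}\subseteq J^{\perp}$ because $J\subseteq H\subseteq H^{\perp}$, the inner product $\langle\mathfrak{b}_{\eta,\lambda}^{H,J},\mathfrak{b}_{\kappa,\nu}^{H,J}\rangle$ vanishes automatically unless $\lambda$ and $\nu$ lie in the same $J^{\perp}$-coset and, within it, in the same $H$-coset (both inside $J^{\perp}$) or $H^{\perp}$-coset (both outside) -- which is exactly the constraint the admissible moves impose on the second index. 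In the first case the vectors are $\mathfrak{a}^{H}$-vectors and Lemma \ref{ortho} settles everything. In the second case, the transformation rule above lets me reduce to $\nu=\lambda$ at the cost of a unit scalar, and then the two expansions yield
\[\big\langle\mathfrak{b}_{\eta,\lambda}^{H,J},\mathfrak{b}_{\kappa,\lambda}^{H,J}\big\rangle=\tfrac{1}{|H^{\perp}|}\sum_{\gamma\in H^{\perp}}\mathbf{e}\big((\gamma,\eta-\kappa-\xi_{l,H,\tilde{H}}+\xi_{m,H,\tilde{H}})+(l-m)\tfrac{\gamma^{2}}{2}\big),\]
with $l,m$ the exponents attached to $(\eta,\lambda)$ and $(\kappa,\lambda)$ and the $\frac{\lambda^{2}}{2}$-contributions cancelling.

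The step I expect to be the main obstacle is the evaluation of this twisted character sum, since the quadratic twist $(l-m)\frac{\gamma^{2}}{2}$ could in principle make it a nonzero Gauss sum. The point will be to exploit that $H$ is quasi-isotropic: summing first over $h\in H$ in $\gamma=\tau+h$ and using $(\tau,h)=0$ and $\frac{h^{2}}{2}=(h,\xi_{H})$ on $H$, the inner sum becomes $\sum_{h\in H}\mathbf{e}\big((h,\eta-\kappa-\xi_{l,H,\tilde{H}}+\xi_{m,H,\tilde{H}}+(l-m)\xi_{H})\big)$, which equals $|H|$ if that argument lies in $H^{\perp}$ and $0$ otherwise; and since $\xi_{r,H,\tilde{H}}\equiv r\xi_{H}\pmod{H^{\perp}}$ (using $2\xi_{H}\in H^{\perp}$ from Lemma \ref{xiHdef}), the argument is $\equiv\eta-\kappa\pmod{H^{\perp}}$. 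Now if $\eta\equiv\kappa\pmod{J^{\perp}}$ then $l\equiv m\pmod p$ (the image of $\lambda$ generates $D/J^{\perp}$), the twist disappears, and the remaining ordinary sum $\frac{1}{|H^{\perp}|}\sum_{\gamma\in H^{\perp}}\mathbf{e}\big((\gamma,\eta-\kappa)\big)$ is $1$ exactly when $\eta-\kappa\in(H^{\perp})^{\perp}=H$, i.e. exactly when the indices are related. If $\eta\not\equiv\kappa\pmod{J^{\perp}}$ then $l\not\equiv m\pmod p$, the indices are unrelated (the admissible moves change $\eta$ only within $\eta+J^{\perp}$, as $H$ and $H^{\perp}$ both lie in $J^{\perp}$), and $\eta-\kappa\in H^{\perp}$ is impossible since $H^{\perp}\subseteq J^{\perp}$ would force $l\equiv m$; hence the inner $H$-sum, and so the whole sum, vanishes. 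Finally I would trace the unit scalars back through the reductions to conclude that in every case the inner product is nonzero precisely when $(\eta,\lambda)$ and $(\kappa,\nu)$ are related by the listed transformations -- nonvanishing when they are related being clear from the norm-$1$ property -- which completes the proof. The only genuinely delicate bookkeeping is with the vectors $\xi_{r,H,\tilde{H}}$ and the observation that quasi-isotropy of $H$ is exactly what degenerates the residual $H$-sum.
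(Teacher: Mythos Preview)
Your argument is correct and is essentially the paper's intended approach carried out in full: the paper states the corollary as an immediate consequence of Lemma \ref{bwelldef}, meaning one views $\mathfrak{b}_{\eta,\lambda}^{H,J}$ for $\lambda\notin J^{\perp}$ as a unit scalar times $\rho_{D}(T^{l})\mathfrak{a}_{\eta-l\lambda-\xi_{l,H,\tilde{H}},\lambda}^{H^{\perp}}$ and invokes Lemma \ref{ortho} for the subgroup $H^{\perp}$, and your explicit $\mathfrak{e}_{\gamma}$-expansion is exactly this unfolded. The one phrase to tighten is ``the twist disappears'' when $\eta\equiv\kappa\pmod{J^{\perp}}$: rather than arguing that $(l-m)\tfrac{\gamma^{2}}{2}$ vanishes for $p\mid l-m$, simply note that by Lemma \ref{bwelldef} you may choose $l=m$ in the two expansions before pairing them.
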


The actions of the generators $S$ and $T$ on the vectors from Equation \eqref{bHJvec} take the following form.
\begin{prop}
For every such $H$, $J$, $\eta$, and $\lambda$ we have the equality \[\rho_{D}(T)\mathfrak{b}_{\eta,\lambda}^{H,J}=\mathbf{e}\big(\tfrac{\lambda^{2}}{2}\big)\mathfrak{b}_{\eta+\lambda+\xi_{H,\tilde{H}},\lambda}^{H,J}.\] If either $\eta$ or $\lambda$ lies outside of $J^{\perp}$, then there exists an 8th root of unity $\varepsilon_{J}(S,v)$, for $v:=\binom{\eta}{\lambda}$ as in Remark \ref{actindvec}, such that \[\rho_{D}(S)\mathfrak{b}_{\eta,\lambda}^{H,J}=\varepsilon_{J}(S,v)\mathbf{e}\big(-(\lambda,\eta)\big)\mathfrak{b}_{-\lambda,\eta}^{H,J}.\] \label{formbHJ}
\end{prop}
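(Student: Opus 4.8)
The plan is to prove both identities by transporting the known actions of $T$ and $S$ on the vectors $\mathfrak{a}^{H}$ and $\mathfrak{a}^{H^{\perp}}$ (Lemma \ref{rhoDaH}, Proposition \ref{actofS}) through the two available descriptions of $\mathfrak{b}_{\eta,\lambda}^{H,J}$: the explicit sum of Equation \eqref{bHJvec}, and the identity $\mathfrak{b}_{\eta,\lambda}^{H,J}=\mathbf{e}\big(-l\tfrac{\lambda^{2}}{2}\big)\rho_{D}(T^{l})\mathfrak{a}_{\eta-l\lambda-\xi_{l,H,\tilde{H}},\lambda}^{H^{\perp}}$ of Lemma \ref{bwelldef}. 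The $\xi$-bookkeeping is absorbed throughout by: the fact that $\xi_{H,\tilde{H}}$ represents $\xi_{H}$ modulo $H^{\perp}$ and lies in $J^{\perp}$; the relations $2\xi_{H,\tilde{H}}\in H=(H^{\perp})^{\perp}$ and $\operatorname{ord}\big(\tfrac{\xi_{H,\tilde{H}}^{2}}{2}\big)\mid 8$ of Corollary \ref{veccomp}; the identity $\xi_{l+1,H,\tilde{H}}+\xi_{l,H,\tilde{H}}=\xi_{H,\tilde{H}}$; and the invariance (Lemma \ref{ortho}) of $\mathfrak{a}^{H}$, resp.\ $\mathfrak{a}^{H^{\perp}}$, under altering the first index modulo $H^{\perp}$, resp.\ modulo $H$.

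For $\rho_{D}(T)$: if $\lambda\in J^{\perp}$ then $\mathfrak{b}_{\eta,\lambda}^{H,J}=\mathfrak{a}_{\eta,\lambda}^{H}$, and the claim is Lemma \ref{rhoDaH} with $l=1$, after replacing $\xi_{1,H}=\xi_{H}$ by $\xi_{H,\tilde{H}}$. If $\lambda\notin J^{\perp}$, then (using $\xi_{H,\tilde{H}}\in J^{\perp}$) the exponent $l+1$ is valid for $\mathfrak{b}_{\eta+\lambda+\xi_{H,\tilde{H}},\lambda}^{H,J}$, and applying $\rho_{D}(T)$ to the Lemma \ref{bwelldef} expression for $\mathfrak{b}_{\eta,\lambda}^{H,J}$ and then invoking $\xi_{H,\tilde{H}}-\xi_{l+1,H,\tilde{H}}=\xi_{l,H,\tilde{H}}$ together with $2\xi_{l,H,\tilde{H}}\in H$ identifies the result with $\mathbf{e}\big(\tfrac{\lambda^{2}}{2}\big)\mathfrak{b}_{\eta+\lambda+\xi_{H,\tilde{H}},\lambda}^{H,J}$.

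For $\rho_{D}(S)$: when exactly one of $\eta,\lambda$ lies in $J^{\perp}$ one may take the exponent $0$ in Equation \eqref{bHJvec}, so that each of $\mathfrak{b}_{\eta,\lambda}^{H,J},\mathfrak{b}_{-\lambda,\eta}^{H,J}$ is an $\mathfrak{a}^{H}$ or an $\mathfrak{a}^{H^{\perp}}$, and Lemma \ref{rhoDaH} gives the claim with $\varepsilon_{J}(S,v)=\mathbf{e}(-\operatorname{sgn}D/8)$. In the main case $\eta,\lambda\notin J^{\perp}$, the exponent $l$ in Equation \eqref{bHJvec} is prime to $p$ and may be chosen odd, so $\xi_{l,H,\tilde{H}}=\xi_{H,\tilde{H}}$; then Lemma \ref{bwelldef} gives $\rho_{D}(S)\mathfrak{b}_{\eta,\lambda}^{H,J}=\mathbf{e}\big(-l\tfrac{\lambda^{2}}{2}\big)\rho_{D}(ST^{l})\mathfrak{a}_{\eta-l\lambda-\xi_{H,\tilde{H}},\lambda}^{H^{\perp}}$, which I would feed into Proposition \ref{actofS} with $\xi_{l,H}$ taken to be $\xi_{H,\tilde{H}}$ and with $k$ odd chosen so that $kl\equiv 1$ modulo the modulus of Lemma \ref{Milodd} and modulo $p$. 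Setting $\mu=\eta-l\lambda-\xi_{H,\tilde{H}}$, the index data of Proposition \ref{actofS} become $\mu+\xi_{H,\tilde{H}}=\eta-l\lambda$, $\mu+l\lambda+\xi_{H,\tilde{H}}=\eta$ and $(kl-1)\lambda+k\mu=k\eta-\lambda-k\xi_{H,\tilde{H}}$, the last of which equals $k\eta-\lambda-\xi_{H,\tilde{H}}$ modulo $H$ because $k$ is odd and $2\xi_{H,\tilde{H}}\in H$. Since $-k\equiv m\pmod p$, where $m$ is the exponent governing $\mathfrak{b}_{-\lambda,\eta}^{H,J}$ (both satisfy $(\,\cdot\,)l\equiv-1\bmod p$), a second application of Lemma \ref{bwelldef}, now with exponent $-k$, identifies $\rho_{D}(T^{-k})\mathfrak{a}_{k\eta-\lambda-\xi_{H,\tilde{H}},\eta}^{H^{\perp}}$ with $\mathbf{e}\big(-k\tfrac{\eta^{2}}{2}\big)\mathfrak{b}_{-\lambda,\eta}^{H,J}$. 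Collecting all scalars and expanding $\tfrac{\mu^{2}}{2}$ and $(\mu,\xi_{H,\tilde{H}})$, every term containing $\tfrac{\lambda^{2}}{2}$, $(\lambda,\eta)$ or $\tfrac{\eta^{2}}{2}$ telescopes, leaving $\rho_{D}(S)\mathfrak{b}_{\eta,\lambda}^{H,J}=\mathbf{e}\big(\tfrac{\operatorname{sgn}A_{0}(l)-\operatorname{sgn}D}{8}-k\tfrac{\xi_{H,\tilde{H}}^{2}}{2}\big)\mathbf{e}\big(-(\lambda,\eta)\big)\mathfrak{b}_{-\lambda,\eta}^{H,J}$. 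Thus one may take $\varepsilon_{J}(S,v)=\mathbf{e}\big(\tfrac{\operatorname{sgn}A_{0}(l)-\operatorname{sgn}D}{8}-k\tfrac{\xi_{H,\tilde{H}}^{2}}{2}\big)$, which is an $8$th root of unity since $\operatorname{sgn}A_{0}(l),\operatorname{sgn}D\in\mathbb{Z}/8\mathbb{Z}$ and $\operatorname{ord}\big(\tfrac{\xi_{H,\tilde{H}}^{2}}{2}\big)\mid 8$.

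The main obstacle is the last collecting-and-telescoping step: one must carry every phase factor accurately through the single application of Proposition \ref{actofS} and the two applications of Lemma \ref{bwelldef}, and check that all $p$-th-root-of-unity contributions present in the intermediate vectors cancel, so that only $\mathbf{e}(-(\lambda,\eta))$ and the two explicit $8$th roots survive. A subsidiary point deserving explicit verification is the mutual compatibility of the parities of $l$ and of $k$ (hence of $-k$), since this is precisely what makes the several vectors $\xi_{\bullet,H,\tilde{H}}$ appearing on the two sides collapse to the single vector $\xi_{H,\tilde{H}}$ modulo $H$.
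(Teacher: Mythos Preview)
Your proposal is correct and follows essentially the same route as the paper: both proofs use Lemma~\ref{bwelldef} to rewrite $\mathfrak{b}^{H,J}$ in terms of $\mathfrak{a}^{H^{\perp}}$, apply Lemma~\ref{rhoDaH} (for $T$) or Proposition~\ref{actofS} (for $S$ in the main case), and then invoke Lemma~\ref{bwelldef} once more to identify the result, arriving at the identical value $\varepsilon_{J}(S,v)=\mathbf{e}\big(\tfrac{\operatorname{sgn}A_{0}(l)-\operatorname{sgn}D}{8}-k\tfrac{\xi_{l,H,\tilde{H}}^{2}}{2}\big)$. Your choice to fix $l$ odd (and hence $\xi_{l,H,\tilde{H}}=\xi_{H,\tilde{H}}$, and $k$ odd) is a harmless specialization that the paper does not make explicitly, but it does not change the structure of the argument; the paper simply keeps $\xi_{l,H,\tilde{H}}$ generic and absorbs the parity bookkeeping into the phrase ``after some cancellations.''
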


\begin{proof}
If $\lambda \in J^{\perp}$ then both formulae follow from Lemma \ref{rhoDaH} (note that for $S$ we assume $\eta \notin J^{\perp}$, and we can apply the formula from Lemma \ref{bwelldef}), the second one with $\varepsilon_{J}(S,v)=\mathbf{e}(-\operatorname{sgn}D/8)$. When $\lambda \notin J^{\perp}$, the expression for $T$ is a consequence of Lemma \ref{bwelldef}, since the difference between $\xi_{l,H,\tilde{H}}$ and $\xi_{l+1,H,\tilde{H}}$ is $\xi_{H,\tilde{H}}$, which has order 2 in $D/H$ by Corollary \ref{veccomp}.

For evaluating the action of $\rho_{D}(S)$ when $\lambda \notin J^{\perp}$, we take $l$ as in Equation \eqref{bHJvec}, and write $\mathfrak{b}_{\eta,\lambda}^{H,J}$ as in Lemma \ref{bwelldef} once again. If $\eta \in J^{\perp}$ then $l$ can be taken to be 0, and the desired formula is again obtained from Lemma \ref{rhoDaH}, again with $\varepsilon_{J}(S,v)=\mathbf{e}(-\operatorname{sgn}D/8)$. On the other hand, when $\eta \notin J^{\perp}$ the index $l$ is prime to the $p$-power $|A|$, we choose $k$ as in Lemma \ref{Milodd}, and then Proposition \ref{actofS} presents $\rho_{D}(S)\mathfrak{b}_{\eta,\lambda}^{H,J}$, after some cancelations, as \[\mathbf{e}\Big(\tfrac{\operatorname{sgn}A_{0}(l)-\operatorname{sgn}D}{8}-(\lambda,\eta)-k\tfrac{\xi_{l,H,\tilde{H}}^{2}}{2}+k\tfrac{\eta^{2}}{2}\Big)\rho_{D}(T^{-k}) \mathfrak{a}_{-\lambda+k\eta-k\xi_{l,H,\tilde{H}},\eta}^{H^{\perp}}.\] Another application of Lemma \ref{bwelldef} compares this with the desired result, with $\varepsilon_{J}(S,v)=\mathbf{e}\Big(\frac{\operatorname{sgn}A_{0}(l)-\operatorname{sgn}D}{8}-k\frac{\xi_{l,H,\tilde{H}}^{2}}{2}\Big)$, which is indeed an 8th root of unity by Corollary \ref{veccomp} again. This proves the proposition.
\end{proof}

Using Proposition \ref{formbHJ}, the proofs of Theorems \ref{genact} and \ref{qisotriv} establish the following result.
\begin{thm}
Let $H$, $J$, $\eta$, and $\lambda$ be as in Proposition \ref{formbHJ}, and assume that $J$ is isotropic and that not both of $\eta$ and $\lambda$ are in $J^{\perp}$. Take $M\in\operatorname{Mp}_{2}(\mathbb{Z})$, set $v=\binom{\eta}{\lambda}$, and write the associated vector $\mathfrak{b}_{\eta,\lambda}^{H,J}$ from Equation \eqref{bHJvec} as $\mathfrak{b}_{v}^{H,J}$. Then the action of $\rho_{D}(M)$ takes $\mathfrak{b}_{v}^{H,J}$ to $\varepsilon_{J}(M,v)\mathbf{e}\big(Q(M,v)\big)\mathfrak{b}_{M*v}^{H,J}$, where $Q(M,v)$ is defined in Equation \eqref{QMv}, $M*v$ is defined via Equation \eqref{twistact} (with $\xi_{l,H,\tilde{H}}$ replacing $\xi_{H}$), and $\varepsilon_{J}(M,v)$ is an 8th root of unity. \label{arrowJperp}
\end{thm}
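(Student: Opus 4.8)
The plan is to mimic the structure of the proofs of Theorems~\ref{genact} and \ref{qisotriv}, using the two generator formulae from Proposition~\ref{formbHJ} together with the cocycle identity for $Q$ from Lemma~\ref{cocycle} and the twisted action $M*v$ of Equation~\eqref{twistact}. First I would observe that the set of vectors $\{\mathfrak{b}_{v}^{H,J}\}$ (for $v$ ranging over a suitable index set, modulo the equivalences described in Corollary~\ref{basisTl}) is permuted up to scalars by $\rho_{D}(M)$: this is the content we want, and the point is that $\rho_{D}$ is a group homomorphism, so once we know $\rho_{D}(M)\mathfrak{b}_{v}^{H,J}=\varepsilon_{J}(M,v)\mathbf{e}\big(Q(M,v)\big)\mathfrak{b}_{M*v}^{H,J}$ on a generating set (with $\varepsilon_{J}$ an $8$th root of unity depending on $M$ and $v$, and satisfying an appropriate cocycle relation so that the product of two such scalars is again of the required shape), it extends to all of $\operatorname{Mp}_{2}(\mathbb{Z})$. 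Concretely, if the formula holds for $M$ and for $N$, then
\[
\rho_{D}(MN)\mathfrak{b}_{v}^{H,J}=\varepsilon_{J}(N,v)\mathbf{e}\big(Q(N,v)\big)\rho_{D}(M)\mathfrak{b}_{N*v}^{H,J}
=\varepsilon_{J}(N,v)\varepsilon_{J}(M,N*v)\mathbf{e}\big(Q(N,v)+Q(M,N*v)\big)\mathfrak{b}_{M*(N*v)}^{H,J},
\]
and one checks $M*(N*v)=(MN)*v$ and $Q(N,v)+Q(M,N*v)=Q(MN,v)$ exactly as in the proof of Theorem~\ref{qisotriv} (the latter from the proof of Lemma~\ref{cocycle} combined with Equations~\eqref{twistact} and \eqref{twistQ}, now with $\xi_{l,H,\tilde{H}}$ in place of $\xi_{H}$ — note $\xi_{l,H,\tilde{H}}$ is $0$ for even $l$ and $\xi_{H,\tilde{H}}$ for odd $l$, exactly the behaviour of $\xi_{l,H}$ used there). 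Setting $\varepsilon_{J}(MN,v):=\varepsilon_{J}(N,v)\varepsilon_{J}(M,N*v)$ keeps us in the realm of $8$th roots of unity, so the induction on word length in the generators closes.

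The base cases are $T^{\pm1}$ and $S$. For $T$ this is the first displayed formula in Proposition~\ref{formbHJ}, which gives $\varepsilon_{J}(T,v)=1$ and $T*v$ precisely as in Equation~\eqref{twistact} (the shift by $\binom{\xi_{H,\tilde{H}}}{0}$ matches the first index shift $\eta\mapsto\eta+\lambda+\xi_{H,\tilde{H}}$, and the multiplier $\mathbf{e}(\lambda^{2}/2)$ is $\mathbf{e}(Q(T,v))$ by Equation~\eqref{QMv}). For $T^{-1}$ one either runs the same computation or inverts the $T$ formula. For $S$ this is the second displayed formula in Proposition~\ref{formbHJ}, valid precisely under the hypothesis that not both of $\eta,\lambda$ lie in $J^{\perp}$: it gives $\varepsilon_{J}(S,v)$ as the $8$th root of unity produced there, $S*v=Sv$ (consistent with \eqref{twistact} since $S\in ST\Gamma_{\mathrm{odd}}$ would shift, but $S$ itself — careful here — I need to track whether $S\in\Gamma_{\mathrm{odd}}$; it is, as $\Gamma_{\mathrm{odd}}=\langle S,T^{2}\rangle$, so $S*v=Sv$), and the multiplier $\mathbf{e}(-(\lambda,\eta))$ equals $\mathbf{e}(Q(S,v))$ by Equation~\eqref{QMv}. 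Here the isotropy of $J$ enters: it is what makes the whole orbit of $v$ under $\operatorname{Mp}_{2}(\mathbb{Z})$ stay in the locus ``not both coordinates in $J^{\perp}$'', so that Proposition~\ref{formbHJ}'s $S$-formula is applicable at every step of the induction — I would verify this stability explicitly, noting that $J$ isotropic means $J\subseteq J^{\perp}$ and that the pair of coordinates of $Mv$ spans the same subgroup-pattern relative to $J^{\perp}$ as that of $v$ up to the $\operatorname{SL}_{2}$-action on $(D/J^{\perp})^{2}$, which cannot collapse a vector with a coordinate outside $J^{\perp}$ to one with both inside.

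The main obstacle is exactly this bookkeeping of the scalar $\varepsilon_{J}(M,v)$ and the domain-of-validity condition. Unlike in Theorem~\ref{genact}, where $\operatorname{sgn}D=0$ and $S$ contributes no anomalous factor, here $\varepsilon_{J}(S,v)$ genuinely depends on the auxiliary integer $k$ from Lemma~\ref{Milodd} and on $\xi_{l,H,\tilde{H}}$ through $\mathbf{e}\big((\operatorname{sgn}A_{0}(l)-\operatorname{sgn}D)/8-k\xi_{l,H,\tilde{H}}^{2}/2\big)$; one must confirm that although $k$ and $l$ are non-canonical, the product $\varepsilon_{J}(M,v)\mathbf{e}(Q(M,v))\mathfrak{b}_{M*v}^{H,J}$ is well-defined (this uses Corollary~\ref{basisTl} in the same way Remark~\ref{actwdqiso} used Lemma~\ref{ortho}), and that iterating the generator relations never produces a factor outside the group of $8$th roots of unity — which follows since each generator contributes such a factor and the group is closed under multiplication, but should be stated. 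I do not expect to obtain a closed formula for $\varepsilon_{J}(M,v)$ in general (the introduction flags this), so the theorem is deliberately stated with $\varepsilon_{J}(M,v)$ as merely ``an $8$th root of unity''; the proof therefore only needs the inductive propagation, not an evaluation. I would close by remarking that the case where both $\eta,\lambda\in J^{\perp}$ is excluded because there $\mathfrak{b}_{\eta,\lambda}^{H,J}=\mathfrak{a}_{\eta,\lambda}^{H}$ lands in the sub-representation $\uparrow_{J}\mathbb{C}[J^{\perp}/J]$ handled separately (cf.\ Remark~\ref{compsub}), so no generality is lost.
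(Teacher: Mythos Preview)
Your proposal is correct and follows essentially the same approach as the paper, whose proof is the single sentence ``Using Proposition~\ref{formbHJ}, the proofs of Theorems~\ref{genact} and \ref{qisotriv} establish the following result.'' The only point to sharpen is your stability argument: the reason the orbit stays in the locus ``not both coordinates in $J^{\perp}$'' is not merely that the linear action of $\operatorname{SL}_{2}$ on $(D/J^{\perp})^{2}$ cannot collapse, but that the \emph{twisted} action $M*v$ involves shifts by $\xi_{H,\tilde{H}}$, and isotropy of $J$ forces $J\subseteq H_{0}$ and hence $\xi_{H,\tilde{H}}\in H_{0}^{\perp}\subseteq J^{\perp}$ (Corollary~\ref{veccomp}), so these shifts do not change the $(D/J^{\perp})^{2}$-image---this is precisely the content of Remark~\ref{Jperpiso}.
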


\begin{rmk}
We need that $J$ be isotropic in Theorem \ref{arrowJperp}, in order for the vector $\xi_{H,\tilde{H}}$ to be in $J^{\perp}$, so that the operations from that theorem preserve the property than not both $\lambda$ and $\eta$ are in $J^{\perp}$. We can then set $B:=J^{\perp}/J$, and get a description of the orthogonal complement, inside $\mathbb{C}[D]$, of the sub-representation $\uparrow_{J}\mathbb{C}[B]$ from Corollary \ref{arrow}. Moreover, using Corollary \ref{basisTl} we can easily describe this orthogonal complement using an orthonormal basis. \label{Jperpiso}
\end{rmk}

\begin{rmk}
Note that unlike in Remark \ref{discqiso}, the parameter $\varepsilon_{J}(M,v)$ from Theorem \ref{arrowJperp} is no longer a character of $\operatorname{Mp}_{2}(\mathbb{Z})$. This is because of its dependence on $v$, as seen in the proof of Proposition \ref{formbHJ}. Note also that since $\varepsilon_{J}(T,v)=1$ for every $v$ (see Proposition \ref{formbHJ}), the root of unity $\varepsilon_{J}(M,v)$ depends only on the lower row of $M$ (as well as the metaplectic sign when $\operatorname{sgn}D$ is odd). Note that for odd $p$ and non-isotropic $H$, where as in Remark \ref{discqiso}, every signature may appear,
the two presentations of $\varepsilon_{J}(S,v)$ when $p$ does not divide $l$ look different: One with even $l$ where the term involving $k$ disappears, and one with odd $k$. If $D$ has odd signature, then in the first case $A_{0}(l)$ is a discriminant of odd order hence even signature, but in the second case $k$ is odd, $A_{0}(l)$ has odd signature, and $\frac{\xi_{l,H,\tilde{H}}^{2}}{2}$ is of order 8. However, when $H$ is isotropic, we can replace $\mathfrak{b}_{\eta,\lambda}^{H,J}=\mathfrak{a}_{\eta,\lambda}^{H}$ when $\lambda \in J^{\perp}$ by $\mathbf{e}(\operatorname{sgn}D/8)\mathfrak{a}_{\eta,\lambda}^{H}$, and then all the roots of unity from Proposition \ref{formbHJ} and Theorem \ref{arrowJperp} will be of order 4. Moreover, when $D$ has even signature these roots of unity will be reduced in this way to signs, and if $|A|$ is an even power of $p$ and either $p$ is odd or $A$ satisfies some signature condition, all these parameters disappear and we get an action like in Theorem \ref{genact}. \label{rootunit}
\end{rmk}

We conclude by remarking that the formula from Equation \eqref{withaut}, and its properties given in Proposition \ref{propsym}, can be extended to the vectors from Equation \eqref{bHJvec} and their properties, once extra assumptions are made on $G$. Specifically we need $G$ to preserve $J$ as well (and then the parameter $l$ from Equation \eqref{withaut} remains unaffected), but also the group $\tilde{H}^{\perp}$, whose definition in Lemma \ref{liftmodH0} involved a choice in some cases, must be preserved. These assumptions are clearly satisfied in the cyclic case considered in Theorem \ref{cycdecom}, but not in general.

\section{Integral Bases for Discriminant Forms \label{GenRes}}

In this section we establish the first main goal of this paper, namely proving that the Weil representation $\rho_{D}$ associated with any discriminant form $D$ can always be defined over a ring of algebraic integers.

\smallskip

We begin with the case where $D$ is the cyclic discriminant form $p^{\pm1}$, where $p$ is some odd prime. The even part of this case was essentially dealt with in \cite{[Wa]}, but we reproduce the proof because some of our previous results shorten it significantly. However, the odd part requires an auxiliary technical lemma.
\begin{lem}
Given positive integers $h$ and $m$, assume that we are given an analytic function $\varphi_{m,h}$ of the variable $\zeta$, such that the Taylor expansion of $\varphi_{m,h}$ at $\zeta=1$ is $\sum_{n=0}^{\infty}P_{h,n}(m)(\zeta-1)^{n}$ in which $P_{h,n}$ is an odd polynomial of degree $2n+2h-1$, and the function $\varphi_{m,h}$ is the constant $\delta_{m,h}$ when $m \leq h$. Fix a third integer $r\geq0$, and define functions $f_{m,h}^{(r)}$ of $\zeta$ as follows: For $h=0$ we set $f_{m,0}^{(r)}$ to be the constant $\binom{m+r}{2r+1}$, and for $h\geq1$ the function $f_{m,h}^{(r)}$ is defined inductively as $f_{m,h-1}^{(r)}(\zeta)-\varphi_{m,h}(\zeta)f_{h,h-1}^{(r)}(\zeta)$. Then the function $f_{m,h}^{(r)}$ is the constant $\binom{m+r}{2r+1}$ wherever $h \leq r$, and vanishes to order at least $h-r$ at $\zeta=1$ in case $h \geq r$. \label{expzeta=1}
\end{lem}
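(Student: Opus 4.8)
The plan is to induct on $h$, keeping careful track of two separate regimes: the ``stable'' range $h\le r$ where the functions $f_{m,h}^{(r)}$ are supposed to be constant, and the ``vanishing'' range $h\ge r$ where we want high-order vanishing at $\zeta=1$. The base case $h=0$ is immediate from the definition, since $f_{m,0}^{(r)}$ is the constant $\binom{m+r}{2r+1}$ and both assertions read off directly (for $h=0\le r$ it is the asserted constant; for $h=0\ge r$, i.e. $r=0$, it vanishes to order $\ge 0$, which is vacuous).

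For the inductive step, assume the statement for $h-1$ and examine $f_{m,h}^{(r)}=f_{m,h-1}^{(r)}-\varphi_{m,h}f_{h,h-1}^{(r)}$. \emph{First} I would handle the stable range: if $h\le r$ then also $h-1\le r$, so by induction $f_{m,h-1}^{(r)}\equiv\binom{m+r}{2r+1}$ and $f_{h,h-1}^{(r)}\equiv\binom{h+r}{2r+1}$ are both constants; meanwhile $m$ ranges over values $\ge h$ in any nontrivial instance, so I need $\varphi_{m,h}$ to be the constant $\delta_{m,h}$ precisely when $m\le h$ — this is where the hypothesis ``$\varphi_{m,h}=\delta_{m,h}$ for $m\le h$'' is used, but note it only gives the constant value at $m=h$. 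The point is that for $h\le r$ the combination $\binom{m+r}{2r+1}-\varphi_{m,h}(\zeta)\binom{h+r}{2r+1}$ must still be the constant $\binom{m+r}{2r+1}$, which forces $\binom{h+r}{2r+1}=0$; and indeed $\binom{h+r}{2r+1}=0$ exactly when $h+r<2r+1$, i.e. $h\le r$. So in this range the subtracted term vanishes identically and $f_{m,h}^{(r)}=f_{m,h-1}^{(r)}=\binom{m+r}{2r+1}$, as needed.

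\emph{Second}, the vanishing range $h\ge r$. Here the mechanism is that $f_{h,h-1}^{(r)}$ is, by the defining recursion, built so that subtracting $\varphi_{m,h}\cdot f_{h,h-1}^{(r)}$ cancels the lowest surviving Taylor coefficient of $f_{m,h-1}^{(r)}$. Concretely: by induction $f_{m,h-1}^{(r)}$ vanishes to order $\ge (h-1)-r$ at $\zeta=1$ (when $h-1\ge r$; when $h-1<r\le h$, i.e. $h=r$, it is instead the \emph{constant} $\binom{m+r}{2r+1}$, and I must treat this boundary case separately). Writing the Taylor expansion of $f_{m,h-1}^{(r)}$ at $\zeta=1$ and of $\varphi_{m,h}=\sum_n P_{h,n}(m)(\zeta-1)^n$ with $P_{h,0}(m)=\varphi_{m,h}(1)$ a specific odd polynomial in $m$ of degree $2h-1$, the leading term of the product $\varphi_{m,h}f_{h,h-1}^{(r)}$ matches the leading term of $f_{m,h-1}^{(r)}$ provided $P_{h,0}(m)$ is the right ratio of the two leading coefficients as polynomials in $m$; the degree bookkeeping ($\deg P_{h,n}=2n+2h-1$) is exactly what makes this ratio a polynomial of the correct parity and degree, so that the construction is consistent. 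The key identity driving the cancellation is that both $f_{m,h-1}^{(r)}$ and $f_{h,h-1}^{(r)}$ share the same $(\zeta-1)^{(h-1)-r}$-coefficient up to a factor polynomial in $m$, and $P_{h,0}$ is precisely that interpolating factor — this should be provable by unwinding the recursion one more step, expressing $f_{m,h-1}^{(r)}$ in terms of $f_{m,h-2}^{(r)}$ and so on down to $f_{m,0}^{(r)}=\binom{m+r}{2r+1}$, whose lowest nonvanishing derivative at $\zeta=1$ is governed by a binomial coefficient that is itself an odd polynomial in $m$ of the predicted degree.

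\emph{The main obstacle} I anticipate is verifying this last matching of Taylor coefficients: one must show that the ``leading coefficient in $m$'' of $f_{m,h-1}^{(r)}$ at order $(h-1)-r$ is a polynomial in $m$ of exactly the degree that $P_{h,0}(m)\cdot(\text{that of }f_{h,h-1}^{(r)})$ can absorb, with matching parity, so that after subtraction the order of vanishing genuinely increases by one. I would set up an auxiliary claim, proved simultaneously with the main induction, recording the precise leading term of $f_{m,h}^{(r)}$ at $\zeta=1$ as an explicit (odd) polynomial in $m$ — presumably essentially $\binom{m+r}{2r+1}$ divided by the product of the $P_{h',0}$'s used so far — and then the cancellation becomes a formal consequence of the recursion defining $f_{\bullet,h}^{(r)}$ together with the degree data hypothesized for the $P_{h,n}$. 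The boundary case $h=r$ (transition from ``constant'' to ``vanishing to order $0$'') and the case $m\le h$ (where $\varphi_{m,h}=\delta_{m,h}$ collapses the recursion to $f_{m,h}^{(r)}=f_{m,h-1}^{(r)}-\delta_{m,h}f_{h,h-1}^{(r)}$, i.e. $f_{h,h}^{(r)}=0$ and $f_{m,h}^{(r)}=f_{m,h-1}^{(r)}$ for $m<h$) should both be checked explicitly as they anchor the induction.
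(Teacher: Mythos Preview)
Your treatment of the stable range $h\le r$ is correct and coincides with the paper's: since $f_{h,h-1}^{(r)}=\binom{h+r}{2r+1}=0$ for $h\le r$, the subtracted term vanishes and the recursion is trivial.

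The gap is in the vanishing range $h\ge r$. Your proposed auxiliary claim---tracking only the \emph{leading} Taylor coefficient of $f_{m,h}^{(r)}$ as a polynomial in $m$---is not strong enough to propagate inductively. Once you cancel the $(\zeta-1)^{h-1-r}$ term, the new leading term of $f_{m,h}^{(r)}$ sits at order $h-r$, and to continue you must know \emph{its} structure as a polynomial in $m$; but that was the \emph{second} Taylor coefficient of $f_{m,h-1}^{(r)}$, about which your auxiliary claim says nothing. Moreover, your assertion that ``$P_{h,0}$ is precisely that interpolating factor'' is not justified: you need the leading coefficient of $f_{m,h-1}^{(r)}$ to be a scalar multiple of $\binom{m+h-1}{2h-1}$ specifically (not merely an odd polynomial of the right degree), and this requires knowing that it vanishes for $m\le h-1$---information that must itself be maintained inductively at every Taylor order, not just the leading one.

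The paper closes this gap by proving a much stronger structural claim: for $h\ge r$, the entire expansion is $f_{m,h}^{(r)}(\zeta)=\sum_{s\ge h-r}\sum_{j=h}^{s+r}\beta_{s,j}^{(h,r)}\binom{m+j}{2j+1}(\zeta-1)^s$, i.e.\ every Taylor coefficient lies in the span of $\binom{m+j}{2j+1}$ with $h\le j\le s+r$. The first step is to expand each $P_{h,n}$ in the basis $\{\binom{m+j}{2j+1}\}_j$: the hypothesis $\varphi_{m,h}=\delta_{m,h}$ for $m\le h$ forces $P_{h,0}(m)=\binom{m+h-1}{2h-1}$ exactly, and for $n\ge1$ forces only $h\le j\le n+h-1$ to appear. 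Then in the recursion, substituting $m=h$ into the inductive expansion of $f_{m,h-1}^{(r)}$ kills all $j\ge h$ terms (since $\binom{h+j}{2j+1}=\delta_{j,h-1}$), so $f_{h,h-1}^{(r)}$ picks out exactly the $j=h-1$ strand; multiplying back by the constant term $\binom{m+h-1}{2h-1}$ of $\varphi_{m,h}$ reconstructs and cancels that entire strand at every order simultaneously, leaving only $j\ge h$. This is the mechanism you were looking for, and it genuinely requires controlling all coefficients, not just the leading one.
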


\begin{proof}
The binomial coefficients $\binom{m+j}{2j+1}$ with $j\geq0$ form a basis for the space of odd polynomials in $m$, where the $j$th such expression has degree $2j+1$, it vanishes for $m \leq j$, and it attains 1 on $m=j+1$. The degree bound means that $P_{h,n}(m)$ is spanned by $\binom{m+j}{2j+1}$ for $0 \leq j \leq n+h-1$, and the values for small $m$ mean that the terms with $j \leq h-\delta_{n,0}$ do not appear in $P_{h,n}(m)$, and $P_{h,0}(m)=\binom{m+h-1}{2h-1}$ with the coefficient 1. We can thus write \[\varphi_{m,h}(\zeta)=\binom{m+h-1}{2h-1}-\sum_{n=1}^{\infty}\sum_{j=h}^{n+h-1}\alpha_{n,j}^{(h)}\binom{m+j}{2j+1}(\zeta-1)^{n}\] for some constants $\alpha_{n,j}^{(h)}$ for any $n\geq1$, $h\geq1$, and $h \leq j \leq n+h-1$.

Now, the constant $f_{m,0}^{(r)}=\binom{m+r}{2r+1}$ vanishes for $m \leq r$. Therefore, if we assume that $f_{m,k-1}^{(r)}$ is the constant $\binom{m+r}{2r+1}$ for some $1 \leq h \leq r$ (which is given when $h=1$), then the vanishing of $f_{h,h-1}^{(r)}$ in the definition of $f_{m,h}^{(r)}$ implies that the latter equals the same constant as well. This proves the first assertion.

We now claim that for any $h \geq r$, the function $f_{m,h}^{(r)}$ expands at $\zeta=1$ as $\sum_{s=h-r}^{\infty}\sum_{j=h}^{s+r}\beta_{s,j}^{(h,r)}\binom{m+j}{2j+1}(\zeta-1)^{s}$ for some constants $\beta_{s,j}^{(h,r)}$, which will clearly establish the second assertion. The claim is evident for $h=r$, with the coefficients $\beta_{s,j}^{(r,r)}=\delta_{s,0}\delta_{j,r}$. Assuming that the claim holds for $h-1$ for some $h>r$, the fact that for $j \geq h-1$ the expression $\binom{h+j}{2j+1}$ equals $\delta_{j,h-1}$ reduces $f_{h,h-1}^{(r)}(\zeta)$ to $\sum_{s=h-1-r}^{\infty}\beta_{s,h-1}^{(h-1,r)}(\zeta-1)^{s}$. Substituting these expressions into the formula $f_{m,h-1}^{(r)}(\zeta)-\varphi_{m,h}(\zeta)f_{h,h-1}^{(r)}(\zeta)$, the summands with $j=h-1$ cancel with the part coming from the constant term of $\varphi_{m,h}$, and our expression for $f_{m,h}^{(r)}(\zeta)$ becomes the sum of $\sum_{s=h-r}^{\infty}\sum_{j=h}^{s+r}\beta_{s,j}^{(h-1,r)}\binom{m+j}{2j+1}(\zeta-1)^{s}$ and \[\sum_{l=h-1-r}^{\infty}\beta_{l,h-1}^{(h-1,r)}(\zeta-1)^{l}\times\sum_{n=1}^{\infty}\sum_{j=h}^{n+h-1}\alpha_{n,j}^{(h)}\binom{m+j}{2j+1}(\zeta-1)^{n}.\] But with $s=n+l \geq h-r$ and $n=s-l \leq n-h+1+r$ we get the inequalities $h \leq j \leq n+h-1 \leq s+r$, and our claim (with the second assertion) follows, with $\beta_{s,j}^{(h,r)}=\beta_{s,j}^{(h-1,r)}+\sum_{n=j+1-h}^{s+r-1-h}\alpha_{n,j}^{(h)}\beta_{s-n,h-1}^{(h-1,r)}$. This proves the lemma.
\end{proof}

We shall also need a decomposition of a Vandermonde matrix.
\begin{lem}
Let $M$ be the Vandermonde matrix of some parameters $\{x_{m}\}_{m=1}^{n}$, in the convention in which the first column of $M$ consists of 1's. Then in the presentation of $M$ as $LU$, where $L$ is lower triangular and $U$ is upper triangular and unipotent, the entries of $U$ are polynomials in $\{x_{m}\}_{m=1}^{n}$, and $L$ decomposes further as the following product. Let $N_{h}$ be the lower triangular unipotent matrix with $ij$ entry 1 in case $i=h \leq j$ and $\delta_{ij}$ otherwise, and set $D_{h}$ to be the diagonal matrix with $i$th diagonal entry 1 if $i \leq h$ and $x_{i}-x_{h}$ if $i>h$. Then $L$ is the product $N_{1}D_{1}N_{2}D_{2}...N_{n-1}D_{n-1}$. \label{Vdm}
\end{lem}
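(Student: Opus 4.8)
The plan is to induct on $n$, peeling off one factor $N_h D_h$ at a time from the left of the Vandermonde matrix $M$ and showing that each such peeling produces a block structure from which the next step can proceed. Write $M = M^{(n)}$ for the $n\times n$ Vandermonde matrix in the $x_m$'s. The key observation is the classical Vandermonde recursion: subtracting $x_1$ times column $j$ from column $j+1$ (for $j$ running down from $n-1$ to $1$) turns $M$ into a matrix whose first row is $(1,0,0,\dots,0)$ and whose lower-right $(n-1)\times(n-1)$ block, after factoring $x_i - x_1$ out of row $i$ for $i \geq 2$, is again a Vandermonde matrix, now in the parameters $x_2,\dots,x_n$. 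These two operations — the column combinations and the row scalings — are exactly encoded by multiplication on the left by $D_1^{-1} N_1^{-1}$ and on the right by a unipotent upper triangular matrix $V_1$ with entries polynomial in $x_1$. That is, $N_1^{-1} D_1^{-1} M^{(n)} V_1^{-1}$ has the block form $\begin{pmatrix} 1 & 0 \\ 0 & M^{(n-1)} \end{pmatrix}$ with $M^{(n-1)}$ the Vandermonde matrix in $x_2,\dots,x_n$.

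First I would verify this base manipulation carefully: check that left-multiplication by $N_1$ performs the row operation ``add row $1$ to every lower row'' (so $N_1^{-1}$ subtracts it), that $D_1$ scales rows $2,\dots,n$ by the factors $x_i - x_1$, and that there is a unipotent upper triangular $V_1$, with polynomial entries, implementing the column telescoping; a direct entrywise check with the binomial identity $x_i^j = \sum_{k} \binom{j}{k} x_1^{\,j-k}(x_i - x_1)^k$ does this. Then I would apply the induction hypothesis to $M^{(n-1)}$: it factors as $L' U'$ with $L' = N_2' D_2' \cdots N_{n-1}' D_{n-1}'$ (indices shifted, parameters $x_2,\dots,x_n$) and $U'$ unipotent upper triangular with polynomial entries. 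Embedding everything back into $n\times n$ matrices by padding with a leading $1\times 1$ identity block, the padded $N_h' D_h'$ are exactly the $N_h D_h$ of the statement (the diagonal $D_h$ has $i$th entry $x_i - x_h$ for $i > h$ and $1$ otherwise, which is what the shifted-and-padded version gives), and the padded $U'$ together with $V_1$ recombine into a single unipotent upper triangular $U$. Tracking the multiplications, $M^{(n)} = N_1 D_1 (N_2 D_2 \cdots N_{n-1} D_{n-1}) U$ with $U$ unipotent upper triangular; that the entries of $U$ are polynomials follows since $V_1$ and the padded $U'$ both have polynomial entries and polynomials are closed under products and sums.

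The main obstacle I expect is bookkeeping rather than conceptual: one must be careful that the column operations (the $V_1$ factor) and the row operations (the $N_1, D_1$ factors) are applied on the correct sides and in an order that does not interfere — in particular that the column telescoping can be carried out \emph{after} the row operations without altering the already-cleared first row, and that the resulting $U = V_1^{-1}\cdot(\text{padded }U')$ is genuinely upper triangular and unipotent (its unipotence comes from $V_1^{-1}$ being unipotent upper triangular and the padded $U'$ being unipotent upper triangular, a property preserved under products). A secondary point to get right is that the factorization $M = LU$ with $L$ lower triangular and $U$ unipotent upper triangular is unique when it exists (all leading principal minors of $M$ are nonzero, being products of differences $x_i - x_j$, assuming the $x_m$ distinct), so the decomposition of $L$ we produce is \emph{the} one referred to in the statement; I would note this uniqueness to justify the phrasing ``in the presentation of $M$ as $LU$.''
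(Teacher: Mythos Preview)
Your proposal is correct and follows essentially the same inductive route as the paper. Both arguments rest on the recursion
\[
L \;=\; N_{1}D_{1}\begin{pmatrix} 1 & 0 \\ 0 & \tilde{L}\end{pmatrix},
\]
with $\tilde{L}$ the $L$-factor of the Vandermonde matrix in $x_{2},\ldots,x_{n}$, and then iterate. The difference is in how the two assertions are separated: the paper quotes the explicit closed form of the $LU$-decomposition from \cite{[Ya]} (namely $U_{ij}=h_{j-i}(x_{1},\ldots,x_{i})$ and $L_{ij}=\prod_{m=1}^{j-1}(x_{i}-x_{m})$), which immediately gives the polynomiality of $U$, and then only has to check the product formula for $L$ against that explicit $L$. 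You instead perform the full Gaussian elimination on $M$ itself, producing both $L$ and $U$ simultaneously and obtaining the polynomial $U$ as a byproduct $U=\bigl(\begin{smallmatrix}1 & 0 \\ 0 & U'\end{smallmatrix}\bigr)V_{1}$. Your version is more self-contained; the paper's is shorter by outsourcing the first assertion.

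One small point to clean up: you write the left factor once as $D_{1}^{-1}N_{1}^{-1}$ and once as $N_{1}^{-1}D_{1}^{-1}$. These do not commute, and it is the former that gives $M=N_{1}D_{1}(\cdots)$, matching your final displayed equation and the statement; just make the two occurrences consistent. Your remark on uniqueness of the $LU$-factorisation (via nonvanishing leading minors) is appropriate and worth keeping.
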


\begin{proof}
Let $h_{k}$ denote the complete homogeneous symmetric polynomial of degree $k$ (with $h_{0}=1$ and $h_{k}=0$ for $k<0$). Then a classical result (see, e.g., Theorem 2 of \cite{[Ya]} in the alternative convention, though it was known much earlier) implies that in this decomposition of $M$, the $ij$th entry of $U$ is $h_{j-i}(x_{1},\ldots,x_{i})$, while the $ij$th entry of $L$ is $\prod_{m=1}^{j-1}(x_{i}-x_{m})$ (indeed vanishing when $i<j)$, with the empty product 1 when $j=1$. This proves the first assertion. The second one follow by induction, once one verifies that $L=N_{1}D_{1}\binom{1\ \ 0}{0\ \ \tilde{L}}$ where $\tilde{L}$ is the $L$-matrix of the Vandermonde matrix of $\{x_{m}\}_{m=2}^{n}$. This proves the lemma.
\end{proof}

Next we establish a certain explicit formula.
\begin{lem}
Take some parameter $\zeta$ and some $k\geq1$, set $\varepsilon_{m}:=\frac{\zeta^{2km}-\zeta^{-2km}}{\zeta^{2m}-\zeta^{-2m}}$, and consider them to be the entries of a column vector $\varepsilon$. Let $N_{h}$ and $D_{h}$ be the matrices from Lemma \ref{Vdm}, where $x_{m}=\zeta^{m^{2}}$. For every $m$ and $h$ we define the function $\varphi_{m,h}(\zeta)=\frac{\zeta^{2m}-\zeta^{-2m}}{\zeta^{2h}-\zeta^{-2h}}\prod_{j=1}^{h-1}\frac{\zeta^{m^{2}}-\zeta^{j^{2}}}{\zeta^{h^{2}}-\zeta^{j^{2}}}$, and define $f_{m,h}^{(r)}(\zeta)$ as in Lemma \ref{expzeta=1}. Then for $m>h$ the $m$th entry of $D_{h}^{-1}N_{h}^{-1} \ldots D_{1}^{-1}N_{1}^{-1}\varepsilon$ is \[\frac{(\zeta^{2k}-\zeta^{-2k})\sum_{r=0}^{m-1}f_{m,h}^{(r)}(\zeta)\eta_{k}^{2r}}{(\zeta^{2m}-\zeta^{-2m})\prod_{j=1}^{h}(\zeta^{m^{2}}-\zeta^{j^{2}})},\qquad\mathrm{where}\qquad\eta_{k}:=\zeta^{k}-\zeta^{-k}.\] Moreover, the $m$th entry of $L^{-1}\varepsilon$ is given by the same expression, with $h=m-1$. \label{withflhr}
\end{lem}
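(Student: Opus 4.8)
The plan is to establish the claimed formula by induction on $h$, running the two assertions of Lemma \ref{Vdm} in tandem with the inductive structure of $f_{m,h}^{(r)}$ from Lemma \ref{expzeta=1}. First I would record the base case $h=0$: since $N_{1}^{-1}\varepsilon$ subtracts the first entry from every lower entry, and $D_{1}^{-1}$ then divides the $m$th entry by $\zeta^{m^{2}}-\zeta^{1}$, the $m$th entry of $D_{0}^{-1}N_{0}^{-1}\varepsilon$ (read as $\varepsilon$ itself, or after the first elementary step, depending on indexing) is $\varepsilon_{m}=\frac{\zeta^{2km}-\zeta^{-2km}}{\zeta^{2m}-\zeta^{-2m}}$. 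The point is to rewrite $\zeta^{2km}-\zeta^{-2km}$ as a polynomial in $\eta_{k}=\zeta^{k}-\zeta^{-k}$ with coefficients $(\zeta^{2k}-\zeta^{-2k})\binom{m+r}{2r+1}$; this is exactly the Chebyshev-type identity $\frac{\sin(2km x)}{\sin(2m x)}$-analogue, or concretely the identity $\frac{X^{m}-X^{-m}}{X-X^{-1}}=\sum_{r}\binom{m+r}{2r+1}(X^{1/2}-X^{-1/2})^{2r}$ applied with $X=\zeta^{2k}$ divided and multiplied appropriately — I would verify it by checking it is a polynomial identity in the single variable $\eta_k$ of the right degree with the right values, matching $f_{m,0}^{(r)}(\zeta)=\binom{m+r}{2r+1}$.

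Next, for the inductive step, I assume the $m$th entry of $D_{h-1}^{-1}N_{h-1}^{-1}\cdots D_{1}^{-1}N_{1}^{-1}\varepsilon$ is the stated expression with $h$ replaced by $h-1$. Applying $N_{h}^{-1}$ subtracts the $h$th entry from each $m$th entry with $m>h$; the $h$th entry is by hypothesis the same expression evaluated at $m=h$, and since $\varphi_{m,h}(\zeta)$ is precisely the ratio of denominators $\frac{(\zeta^{2m}-\zeta^{-2m})\prod_{j=1}^{h}(\zeta^{m^2}-\zeta^{j^2})}{(\zeta^{2h}-\zeta^{-2h})\prod_{j=1}^{h-1}(\zeta^{h^2}-\zeta^{j^2})}$ times a correction — wait, more carefully, $\varphi_{m,h}=\frac{\zeta^{2m}-\zeta^{-2m}}{\zeta^{2h}-\zeta^{-2h}}\prod_{j=1}^{h-1}\frac{\zeta^{m^2}-\zeta^{j^2}}{\zeta^{h^2}-\zeta^{j^2}}$ is exactly the factor needed so that subtracting $\varphi_{m,h}$ times the $h$th entry corresponds, after clearing the common denominator $(\zeta^{2m}-\zeta^{-2m})\prod_{j=1}^{h-1}(\zeta^{m^2}-\zeta^{j^2})$, to forming $f_{m,h-1}^{(r)}-\varphi_{m,h}f_{h,h-1}^{(r)}=f_{m,h}^{(r)}$ in the numerator coefficient of $\eta_k^{2r}$. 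Then applying $D_{h}^{-1}$ divides the $m$th entry (for $m>h$) by $\zeta^{m^2}-\zeta^{h^2}$, which extends the product in the denominator from $\prod_{j=1}^{h-1}$ to $\prod_{j=1}^{h}$, yielding precisely the asserted formula at level $h$. I would need to check the bookkeeping that $N_h^{-1}$ applied to the vector really does produce the difference of the $m$th and $h$th entries with coefficient $1$ (so that the $\varphi_{m,h}$ factor must be inserted by hand — actually it is not inserted by $N_h$; rather $\varphi_{m,h}$ is manufactured by the mismatch of denominators, so the algebra must be arranged so the common-denominator rescaling introduces exactly $\varphi_{m,h}$).

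The final clause — that the $m$th entry of $L^{-1}\varepsilon$ is the same expression with $h=m-1$ — follows because, by Lemma \ref{Vdm}, $L^{-1}=D_{n-1}^{-1}N_{n-1}^{-1}\cdots D_{1}^{-1}N_{1}^{-1}$, but the $m$th entry stabilizes once $h$ reaches $m-1$: the matrices $N_h, D_h$ for $h\geq m$ only involve rows $\geq h\geq m$ acting on the $m$th coordinate trivially (their $m$th row is the standard basis vector since $i=m<h$ puts us off the special row of $N_h$ and on a $1$-diagonal entry of $D_h$). Hence the $m$th entry of $L^{-1}\varepsilon$ equals the $m$th entry of $D_{m-1}^{-1}N_{m-1}^{-1}\cdots D_1^{-1}N_1^{-1}\varepsilon$, which the induction computes as the displayed formula with $h=m-1$.

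The main obstacle I anticipate is purely organizational: keeping the denominators, the index ranges, and the off-by-one conventions in Lemma \ref{Vdm} (the $N_h D_h$ ordering, and whether $D_h$ uses $x_i - x_h$) perfectly consistent with the definition of $\varphi_{m,h}$ and with the inductive recursion $f_{m,h}^{(r)}=f_{m,h-1}^{(r)}-\varphi_{m,h}f_{h,h-1}^{(r)}$, so that the emergence of $\varphi_{m,h}$ from the denominator mismatch is exact rather than approximate. The genuinely substantive input — that $\frac{\zeta^{2km}-\zeta^{-2km}}{\zeta^{2m}-\zeta^{-2m}}$ expands in $\eta_k^{2r}$ with binomial coefficients, and that the recursion for $f$ matches the linear algebra — is elementary once set up correctly; the difficulty is the setup.
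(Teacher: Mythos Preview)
Your proposal is correct and follows essentially the same route as the paper's proof: the base case $h=0$ via the Chebyshev identity expressing $\varepsilon_{m}$ as $\frac{\zeta^{2k}-\zeta^{-2k}}{\zeta^{2m}-\zeta^{-2m}}\sum_{r}\binom{m+r}{2r+1}\eta_{k}^{2r}$, the inductive step by recognizing that $\varphi_{m,h}$ is exactly the ratio of the denominator at index $m$ to the denominator at index $h$ (so that subtracting the $h$th entry from the $m$th entry, as $N_{h}^{-1}$ does, produces the recursion $f_{m,h}^{(r)}=f_{m,h-1}^{(r)}-\varphi_{m,h}f_{h,h-1}^{(r)}$ in the numerator before $D_{h}^{-1}$ inserts the extra factor $\zeta^{m^{2}}-\zeta^{h^{2}}$ downstairs), and the observation that the $m$th entry is untouched by $N_{h},D_{h}$ for $h\geq m$. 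The paper makes the Chebyshev step slightly more explicit by naming $U_{n}$ and using the expansion $U_{n}(y)=\sum_{r}\binom{n+r+1}{2r+1}(2y-2)^{r}$ with argument $\tfrac{\eta_{k}^{2}}{2}+1$, but your identity $\frac{X^{m}-X^{-m}}{X-X^{-1}}=\sum_{r}\binom{m+r}{2r+1}(X^{1/2}-X^{-1/2})^{2r}$ with $X=\zeta^{2k}$ is the same thing.
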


\begin{proof}
We establish the result by induction, where for $h=0$ we need to express $\varepsilon_{m}$ in a more convenient manner. Recalling that $\frac{X^{n+1}-X^{-n-1}}{X-X^{-1}}$ is given by $U_{n}\big(\frac{X+X^{-1}}{2}\big)$ where $U_{n}\big(\frac{y}{2}\big)=\sum_{s=0}^{\lfloor n/2 \rfloor}(-1)^{s}\binom{n-s}{s}y^{n-2s}$ is the \emph{Chebyshev polynomial of the second kind} (with $\lfloor x \rfloor$ being the lower integral function), we can write $\varepsilon_{m}$ as $\frac{\zeta^{2k}-\zeta^{-2k}}{\zeta^{2m}-\zeta^{-2m}}U_{m-1}\big(\frac{\zeta^{2k}+\zeta^{-2k}}{2}\big)$. As the argument of $U_{m-1}$ here is $\frac{\eta_{k}^{2}}{2}+1$, the formula $\sum_{r=0}^{n}\binom{n+r+1}{2r+1}(2y-2)^{r}$ for $U_{n}(y)$ now allows us to express $\varepsilon_{m}$ as $\frac{\zeta^{2k}-\zeta^{-2k}}{\zeta^{2m}-\zeta^{-2m}}\sum_{r=0}^{m-1}\binom{m+r}{2r+1}\eta_{k}^{2r}$, which is the assertion for $h=0$ by the definition of $f_{m,0}^{(r)}$ in Lemma \ref{expzeta=1} and its vanishing for $m \leq r$.

Take now $h\geq1$, and assume that the $m$th entry of $D_{h-1}^{-1}N_{h-1}^{-1} \ldots D_{1}^{-1}N_{1}^{-1}\varepsilon$ is expressed, for $m>h$, by our formula with $h-1$. The action of $N_{h}^{-1}$, which is defined like $N_{h}$ but with $hj$-entry $-1$ for $j>h$, subtracts from it the same term with $m=h$. As $\varphi_{m,h}(\zeta)$ is the quotient of the denominators, we indeed obtain the recursive definition of $f_{m,h}^{(r)}(\zeta)$ from Lemma \ref{expzeta=1}, and the action of $D_{h}^{-1}$ divides by the remaining expression in the denominator. This establishes the formula with $h$, and the for $L^{-1}\varepsilon$, with $L$ decomposed as in Lemma \ref{Vdm}, we just note that the $m$th entry is not affected by the matrices with index $m$ and larger, and thus preserves the value that it attains for $h=m-1$. This proves the lemma.
\end{proof}

The application that we shall need is the following one.
\begin{prop}
Let $\zeta$ be a non-trivial root of unity of prime order $p$, and take some $1 \leq k \leq p-1$. Let $c_{l}$ with $0 \leq l\leq\frac{p-3}{2}$ be the elements of $\mathbb{Q}(\zeta)$ such that $\sum_{l=0}^{(p-3)/2}(\zeta^{2m}-\zeta^{-2m})\zeta^{lm^{2}}c_{l}=\zeta^{2km}-\zeta^{-2km}$ for every $1 \leq m\leq\frac{p-1}{2}$. Then $c_{l}$ lies in $\mathbb{Z}[\zeta]$ for every $0 \leq l\leq\frac{p-3}{2}$. \label{inintbasis}
\end{prop}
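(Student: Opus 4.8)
The plan is to recognize the system as an invertible Vandermonde system and run it through Lemmas \ref{Vdm}, \ref{withflhr} and \ref{expzeta=1}. Put $n=\frac{p-1}{2}$. For $1\le m\le n$ one has $4m\not\equiv 0\pmod p$ and the residues $m^{2}\bmod p$ are pairwise distinct, so $\zeta^{2m}-\zeta^{-2m}\neq 0$, and after dividing each equation by $\zeta^{2m}-\zeta^{-2m}$ the system reads $Vc=\varepsilon$, where $V$ is the Vandermonde matrix of the parameters $x_{m}=\zeta^{m^{2}}$ (with its first column all $1$'s) and $\varepsilon$ has entries $\varepsilon_{m}=\frac{\zeta^{2km}-\zeta^{-2km}}{\zeta^{2m}-\zeta^{-2m}}$ exactly as in Lemma \ref{withflhr}. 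Hence the $c_{l}$ exist, are unique, and form the vector $V^{-1}\varepsilon$. Writing $V=LU$ as in Lemma \ref{Vdm}, the factor $U$ is unipotent upper triangular with entries that are polynomials with integer coefficients in the $x_{m}$, so $U^{-1}$ has entries in $\mathbb{Z}[x_{1},\dots,x_{n}]\subseteq\mathbb{Z}[\zeta]$; therefore $c=U^{-1}(L^{-1}\varepsilon)$ lies in $\mathbb{Z}[\zeta]^{n}$ as soon as $L^{-1}\varepsilon$ does, and that is what I would prove.

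By Lemma \ref{withflhr}, taken with $h=m-1$, the $m$-th entry of $L^{-1}\varepsilon$ equals
\[
\frac{(\zeta^{2k}-\zeta^{-2k})\sum_{r=0}^{m-1}f_{m,m-1}^{(r)}(\zeta)\,\eta_{k}^{2r}}{(\zeta^{2m}-\zeta^{-2m})\prod_{j=1}^{m-1}(\zeta^{m^{2}}-\zeta^{j^{2}})},\qquad \eta_{k}=\zeta^{k}-\zeta^{-k},
\]
and I would finish with a valuation count at the unique prime $\lambda$ of $\mathbb{Z}[\zeta]$ above $p$, for which $\zeta-1$ is a uniformizer and $\zeta\equiv 1\pmod\lambda$. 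Every factor of the denominator has the form $\zeta^{a}-\zeta^{b}$ with $a\not\equiv b\pmod p$ — using $1\le j<m\le n$ one checks $4m\not\equiv 0$ and $m^{2}\not\equiv j^{2}\pmod p$ — and any such factor is $\zeta-1$ times a cyclotomic unit; so the denominator generates $\lambda^{m}$ and is a unit away from $\lambda$. Likewise $\zeta^{2k}-\zeta^{-2k}$ and $\eta_{k}$ have $\lambda$-valuation $1$ (here $1\le k\le p-1$). The essential input, addressed below, is that $f_{m,m-1}^{(r)}(\zeta)$ is an algebraic integer lying in $(\zeta-1)^{(m-1)-r}\mathbb{Z}[\zeta]$; granting it, $f_{m,m-1}^{(r)}(\zeta)\eta_{k}^{2r}$ has $\lambda$-valuation at least $(m-1)-r+2r\ge m-1$, so the numerator has $\lambda$-valuation at least $m$, which cancels the denominator. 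Thus the $m$-th entry of $L^{-1}\varepsilon$ is integral at $\lambda$; it is integral away from $\lambda$ as well, hence lies in $\mathbb{Z}[\zeta]$, the full ring of integers of $\mathbb{Q}(\zeta)$.

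The crux is therefore the claim $f_{m,h}^{(r)}(\zeta)\in(\zeta-1)^{h-r}\mathbb{Z}[\zeta]$, which I expect to be the main obstacle. The first observation is that each $\varphi$ occurring in the recursion of Lemma \ref{expzeta=1} takes a value at $\zeta$ that is a unit of $\mathbb{Z}[\zeta]$: every factor $\zeta^{a}-\zeta^{b}$ with $a\not\equiv b\pmod p$ is $\zeta-1$ times a cyclotomic unit, and since all exponents occurring there are prime to $p$ for the index ranges at hand (each being a product of factors of absolute value less than $p$), the powers of $\zeta-1$ in numerator and denominator cancel. Hence $f_{m,h}^{(r)}(\zeta)$, built from these units and the integers $\binom{m+r}{2r+1}$ by $\mathbb{Z}$-linear combinations and products, is an algebraic integer. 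For the sharper divisibility, I would regard $f_{m,h}^{(r)}$ as a rational function in a single variable $X$ whose numerator lies in $\mathbb{Z}[X]$ (up to a unit power of $X$) and whose denominator is a product of cyclotomic polynomials $\Phi_{d}$ with $p\nmid d$; by Lemma \ref{expzeta=1} this function vanishes to order at least $h-r$ at $X=1$, and since the denominator does not vanish there, its numerator is divisible by $(X-1)^{h-r}$ in $\mathbb{Z}[X]$. Specializing $X\mapsto\zeta$ — where $\zeta-1$ is the uniformizer at $\lambda$ and the denominator specializes to a unit, each $\Phi_{d}(\zeta)$ with $p\nmid d$ being a factor of the cyclotomic unit $\frac{\zeta^{d}-1}{\zeta-1}$ — yields $f_{m,h}^{(r)}(\zeta)\in(\zeta-1)^{h-r}\mathbb{Z}[\zeta]$, as needed. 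I expect the careful tracking of which cyclotomic factors appear, and why each is a $\lambda$-unit, to be the most delicate part; beyond that, no step is deep.
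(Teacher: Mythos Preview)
Your argument is correct and follows the paper's proof closely: both reduce to the integrality of $L^{-1}\varepsilon$ via the Vandermonde $LU$ factorization of Lemma \ref{Vdm}, invoke Lemma \ref{withflhr} for the explicit formula, and finish by comparing the $\mathfrak{p}$-valuation (equivalently, the order of vanishing at $\zeta=1$) of numerator against denominator using Lemma \ref{expzeta=1}. The one step you leave implicit---checking that the particular functions $\varphi_{m,h}$ from Lemma \ref{withflhr} actually satisfy the odd-polynomial Taylor hypotheses required by Lemma \ref{expzeta=1}---is carried out explicitly in the paper's proof, and your $\Phi_d$ bookkeeping (the $\Phi_1$ factors cancel between numerator and denominator of each $\varphi_{m,h}$, leaving only $d>1$ with $p\nmid d$) is exactly the mechanism behind the paper's one-line principle that the power of $\mathfrak{p}$ dividing a polynomial in $\zeta$ is at least its vanishing order at $\zeta=1$.
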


\begin{proof}
Simple division shows that the vector $c$ of our elements is related to $\varepsilon$ from Lemma \ref{withflhr} via $Mc=\varepsilon$, where $M$ is the Vandermonde matrix of the numbers $\zeta_{m^{2}}$ for $1 \leq m\leq\frac{p-1}{2}$, in the convention from Lemma \ref{Vdm}. We thus need the integrality of the entries of $M^{-1}\varepsilon=U^{-1}L^{-1}\varepsilon$, and since the first assertion of that lemma implies that both $U$ and $U^{-1}$ have entries from $\mathbb{Z}[\zeta]$, the desired integrality is equivalent to that of $L^{-1}\varepsilon$. We thus need the integrality of the expression from Lemma \ref{withflhr} with our $\zeta$.

Now, it is clear that the formula in question consists of quotients of differences of the form $\zeta^{a}-\zeta^{b}$. Recall that every non-zero such difference generates the same prime ideal $\mathfrak{p}$ in $\mathbb{Z}[\zeta]$, and quotients between such differences are units in $\mathbb{Z}[\zeta]$. Moreover, the value obtained by substituting $\zeta=1$ in such a unit is non-zero. It therefore follows that the power of $\mathfrak{p}$ dividing any polynomial in $\zeta$ is at least the order of that expression at $\zeta=1$ (this is not equality because the are relations among powers of $\zeta$), but when this polynomial is a product of non-vanishing differences, the power of $\mathfrak{p}$ dividing it is precisely the number of multipliers. In particular, an expression like that from Lemma \ref{withflhr} is integral in case it becomes finite at $\zeta=1$ as a function of $\zeta$.

Next we verify that the functions $\varphi_{m,h}$ from that proposition satisfy the conditions from Lemma \ref{expzeta=1}. The equality $\varphi_{m,h}(\zeta)=\delta_{m,h}$ for $m \leq h$ is clear. Moreover, the $t$th term in the Taylor expansion of each normalized multiplier $\frac{\zeta^{m^{2}}-\zeta^{j^{2}}}{\zeta-1}$ at $\zeta=1$ is an even polynomial of degree $2t+2$ in $m$, while for $\frac{\zeta^{2m}-\zeta^{-2m}}{\zeta-1}$ it is odd of degree $2t+1$. Since the Taylor expansions of the corresponding terms in the denominator of $\varphi_{m,h}(\zeta)$ are independent of $m$, we deduce that the $n$th term in the Taylor expansion of $\varphi_{m,h}(\zeta)$ is indeed an odd polynomial in $m$, of degree $2n+2h-1$.

Finally, the denominator in Lemma \eqref{withflhr} has order $h+1$ at $\zeta=1$. On the other hand, for every $r$ we have an order of $2r$ from $\eta_{k}^{2r}$, Lemma \ref{expzeta=1} implies that $f_{m,h}^{(r)}(\zeta)$ has order at least $h-r$ (this is clear also if $r>h$), and the external multiplier contributes another 1 to the order. Therefore this summand has order at least $h+r+1 \geq h+1$ at $\zeta=1$, meaning that the total quotient is finite at $\zeta=1$, hence lies in $\mathbb{Z}[\zeta]$, for every $m$ and $h$ as desired. This proves the proposition.
\end{proof}

\smallskip

We can now prove the basic case $D=p^{\pm1}$ of our main result.
\begin{lem}
The Weil representation $\rho_{D}$ associated with the discriminant form $D=p^{\pm1}$ has a basis in which the action of $\rho_{D}$ is integral. \label{irrodd}
\end{lem}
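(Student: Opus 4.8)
The plan is to split $\rho_{D}$ into its two irreducible constituents and give each an explicit integral basis of ``Vandermonde type''. Write $D=\mathbb{Z}/p\mathbb{Z}$ with $\tfrac{\gamma^{2}}{2}=\tfrac{t\gamma^{2}}{p}$ for a fixed $t$ prime to $p$, set $\zeta=\mathbf{e}(t/p)$ (a primitive $p$th root of unity) and $\epsilon=\mathbf{e}(-\operatorname{sgn}D/8)$, so that $\rho_{D}(T)\mathfrak{e}_{x}=\zeta^{x^{2}}\mathfrak{e}_{x}$ and $\rho_{D}(S)\mathfrak{e}_{x}=\tfrac{\epsilon}{\sqrt{p}}\sum_{y}\zeta^{-2xy}\mathfrak{e}_{y}$. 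By Proposition \ref{propsym} applied with $G=\{\pm\operatorname{Id}_{D}\}$ (equivalently by Theorem \ref{cycdecom}), $\mathbb{C}[D]=\mathbb{C}[D]^{+}\oplus\mathbb{C}[D]^{-}$ as representations, where $\mathbb{C}[D]^{\pm}$ is the $\pm1$-eigenspace of the flip $\mathfrak{e}_{x}\mapsto\mathfrak{e}_{-x}$, of dimensions $\tfrac{p+1}{2}$ and $\tfrac{p-1}{2}$. It suffices to produce an integral basis of each: $\rho_{D}(S)$ and $\rho_{D}(T)$ have finite order (indeed $\rho_{D}(T)^{p}=I$), so their determinants are roots of unity, and once $\rho_{D}(S),\rho_{D}(T)$ are integral on such a basis every $\rho_{D}(M)$ is as well (they generate, and an integral matrix with unit determinant has integral inverse).

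For the even (symmetric) part, reproving the result of \cite{[Wa]}, I use the vectors $\hat{u}_{l}:=\sum_{x\in D}\zeta^{lx^{2}}\mathfrak{e}_{x}=\tfrac{\sqrt p}{\epsilon}\rho_{D}(T^{l}S)\mathfrak{e}_{0}\in\mathbb{C}[D]^{+}$. For $l=1,\dots,\tfrac{p+1}{2}$ these form a basis (a square Vandermonde matrix in the $\tfrac{p+1}{2}$ distinct values $\zeta^{x^{2}}$, $x=0,\dots,\tfrac{p-1}{2}$, up to an invertible diagonal factor), and $\rho_{D}(T)\hat{u}_{l}=\hat{u}_{l+1}$. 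The relation expressing $\hat{u}_{(p+3)/2}$ through $\hat{u}_{1},\dots,\hat{u}_{(p+1)/2}$ is read off from the monic polynomial $X(X-1)\prod_{r}(X-\zeta^{r})$, the product over the nonzero quadratic residues $r$ modulo $p$; its coefficients lie in $\mathbb{Z}[\zeta]$, being symmetric functions of powers of $\zeta$, so $\rho_{D}(T)$ acts by an integral companion matrix of unit determinant. A completion of the square in the Gauss sum gives $\rho_{D}(S)\hat{u}_{l}=\tfrac{\epsilon g_{l}}{\sqrt p}\hat{u}_{-l^{-1}}$, where $g_{l}=\sum_{x}\zeta^{lx^{2}}=\big(\tfrac{l}{p}\big)g_{1}$ and $g_{1}^{2}=\big(\tfrac{-1}{p}\big)p$; hence $g_{l}/\sqrt p$ is a fourth root of unity, the whole scalar is a root of unity, and $\hat{u}_{-l^{-1}}$ is an integral combination of the basis (apply powers of the integral, unit-determinant matrix $\rho_{D}(T)$). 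Thus $\rho_{D}(S)$ is integral on $\mathbb{C}[D]^{+}$.

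For the odd (antisymmetric) part I take $u_{l}:=\sum_{x\in D}(\zeta^{2x}-\zeta^{-2x})\zeta^{lx^{2}}\mathfrak{e}_{x}=-\tfrac{\sqrt p}{\epsilon}\rho_{D}(T^{l}S)(\mathfrak{e}_{1}-\mathfrak{e}_{-1})\in\mathbb{C}[D]^{-}$. Again $u_{1},\dots,u_{(p-1)/2}$ form a basis, $\rho_{D}(T)u_{l}=u_{l+1}$, and the last column of $\rho_{D}(T)$ is governed by the monic polynomial $X\prod_{r}(X-\zeta^{r})\in\mathbb{Z}[\zeta][X]$, so $\rho_{D}(T)$ is again an integral companion matrix of unit determinant. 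The delicate point, and the reason the earlier machinery is needed, is $\rho_{D}(S)$: the Gauss-sum computation now yields $\rho_{D}(S)u_{l}=-\tfrac{\epsilon g_{l}}{\sqrt p}\zeta^{c}\sum_{y}(\zeta^{2cy}-\zeta^{-2cy})\zeta^{cy^{2}}\mathfrak{e}_{y}$ with $c=-l^{-1}$, and the weight $\zeta^{2cy}-\zeta^{-2cy}$ no longer matches the weight $\zeta^{2y}-\zeta^{-2y}$ built into the $u_{j}$. Here Proposition \ref{inintbasis} supplies $c_{j}\in\mathbb{Z}[\zeta]$ with $\zeta^{2cy}-\zeta^{-2cy}=\sum_{j=0}^{(p-3)/2}(\zeta^{2y}-\zeta^{-2y})\zeta^{jy^{2}}c_{j}$ for every $y\in\mathbb{Z}/p\mathbb{Z}$ (the identity of Proposition \ref{inintbasis} holds for $1\le y\le\tfrac{p-1}{2}$, and both sides are odd in $y$), which rewrites the right-hand side as $-\tfrac{\epsilon g_{l}}{\sqrt p}\zeta^{c}\sum_{j}c_{j}u_{j+c}$. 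As before the scalar is a root of unity and each $u_{j+c}$ is an integral combination of the basis, so $\rho_{D}(S)$ is integral on $\mathbb{C}[D]^{-}$.

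The main obstacle of the lemma is precisely this mismatch of weights in the $S$-action on the odd part; Lemma \ref{expzeta=1}, Lemma \ref{withflhr} and Proposition \ref{inintbasis} were developed exactly to resolve it, while everything else reduces to classical Gauss-sum identities and to the integrality of symmetric functions of roots of unity. Concatenating the bases of $\mathbb{C}[D]^{+}$ and $\mathbb{C}[D]^{-}$ produces the required integral basis of $\mathbb{C}[D]$.
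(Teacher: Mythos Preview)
Your proof is correct and follows essentially the same approach as the paper: both split $\rho_{D}$ into its symmetric and antisymmetric parts, take a $T$-cyclic basis in each, handle $T$ via its characteristic polynomial, and reduce the $S$-action on the antisymmetric piece to Proposition~\ref{inintbasis}. Your shift to indices $l\ge 1$ (so that every $l$ is invertible modulo $p$ and the Gauss-sum identity applies uniformly) is a mild streamlining over the paper, which starts at $l=0$ and must treat the $S$-image of the base vector separately in each parity.
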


\begin{proof}
We decompose $\rho_{D}$ into the even and odd parts, and consider each one separately. We also choose a generator $\eta$ of $D$, write every index $m\eta$ for $m\in\mathbb{F}_{p}$ as simply $m$, and set $\zeta:=\mathbf{e}\big(\frac{\eta^{2}}{2}\big)=\mathbf{e}\big(\frac{t}{p}\big)$ for some $t\in\mathbb{F}_{p}^{\times}$ with $\big(\frac{t}{p}\big)=\pm1$ is the sign for which $D=p^{\pm1}$.

For the even part, consider the vector $\mathfrak{a}_{0,0}^{D,+}=\mathfrak{a}_{0,0}^{D}$. Recall from the proof of Theorem \ref{cycdecom} that the $\pm$ part of $\mathbb{C}[D]$ consists of a basis of eigenvectors for $\rho_{D}(T)$ with different values, and all of these appear in $\mathfrak{a}_{0,0}^{D,+}$. Since there are $\frac{p+1}{2}$ such eigenvalues, which are of the form $\zeta^{j^{2}}$ for $t\in\mathbb{F}_{p}^{\times}/(\mathbb{F}_{p}^{\times})^{2}$ determined by the sign $\pm$ of $D$ and $0 \leq j\leq\frac{p-1}{2}$, we deduce that $\big\{\rho_{D}(T^{l})\mathfrak{a}_{0,0}^{D,+}\big\}_{l=0}^{(p-1)/2}$ form a basis for the corresponding sub-representation of $\rho_{D}$. We prove that this basis has the desired property.

Indeed, it suffices to verify this property for $T$ and $S$. For $T$, each vector but the last is taken to the next one, and the statement is clear. On the other hand, the previous paragraph implies that the presentation of the $\rho_{D}(T)$-image of the last vector in terms of the previous ones is based on the coefficients of the characteristic polynomial of $\rho_{D}(T)$. But we have seen that the roots of this polynomial are roots of unity, so that these coefficients are algebraic integers, as desired.

For the action of $S$, Remark \ref{rhoSsym} allows us to apply Proposition \ref{actofS}, with the parameters $\eta=\lambda=\xi_{l,H}=0$, for evaluating the action on the vectors with $l>0$. This determines the corresponding $\rho_{D}(S)$-image as $\mathbf{e}\big(\frac{\operatorname{sgn}D(l)-\operatorname{sgn}D}{8}\big)$ times $\rho_{D}(T^{-k})\mathfrak{a}_{0,0}^{D,+}$ (in fact, the results of \cite{[Sch]}, \cite{[Str]}, \cite{[Ze]}, and others determine this coefficient to be just the Legendre symbol $\big(\frac{l}{p}\big)$), where we can take $-p<k<0$. This vector thus has algebraic integer coefficients in our basis, by what we know about powers of $T$.

It remains to verify that $\rho_{D}(S)\mathfrak{a}_{0,0}^{D,+}$, or equivalently $\mathfrak{e}_{0}$, has integral coefficients in our basis. Writing the latter vector as $\sum_{l=0}^{(p-1)/2}c_{l}\rho_{D}(T)\mathfrak{a}_{0,0}^{D,+}$, and checking the coefficients of $\mathfrak{a}_{0,j}^{\{0\},+}$, we find that $\sum_{l=0}^{(p-1)/2}c_{l}\zeta^{j^{2}l}=0$ for every $1 \leq j\leq\frac{p-1}{2}$, so that the polynomial $\sum_{l=0}^{(p-1)/2}c_{l}X^{l}$ is a multiple of $\prod_{j=1}^{(p-1)/2}(X-\zeta^{j^{2}l})$ (the multiplier is $c:=c_{(p-1)/2}$). It thus suffices to check that this multiplier is an algebraic integer, for which we compare the coefficients of $\mathfrak{e}_{0}$ on both sides and get $1=\sum_{l=0}^{(p-1)/2}c_{l}=\frac{c}{\sqrt{p}}\prod_{j=1}^{(p-1)/2}(1-\zeta^{j^{2}l})$. But each multiplier generates the prime ideal $\mathfrak{p}$ from the proof of Proposition \ref{inintbasis}, and it is known that if $\pm p(\mathrm{mod\ }4)$ then $\sqrt{\pm p}\in\mathbb{Z}[\zeta]$ and it generates the ideal $\mathfrak{p}^{(p-1)/2}$. It follows that the product and $\sqrt{\pm p}$ generate the same ideal, and thus $c\in\mathbb{Z}[\zeta]$ (and is a unit there). This establishes the desired property of our basis of the even part.

In the odd part there is no vector $\mathfrak{a}_{0,0}^{D,-}$, so we work with $\mathfrak{a}_{1,0}^{D,-}$ instead (representing a generator of $D$). Here $\rho_{D}(T^{l})\mathfrak{a}_{1,0}^{D,-}$ with $0 \leq l\leq\frac{p-3}{2}$ is a basis, since the eigenvalues of $\rho_{D}(T)$ are $\zeta^{j^{2}}$ with $1 \leq j\leq\frac{p-1}{2}$, and the action of $T$ involves integral coefficients by the same argument.

Next, we claim that any other possible generator, namely $\mathfrak{a}_{k,0}^{D,-}$ for some non-zero $k\in\mathbb{F_{p}}$, is generated by our basis with integral coefficients. Indeed, using that basis for the anti-symmetric part of $\rho_{D}$, there are coefficients $c_{l}$ with $0 \leq l\leq\frac{p-3}{2}$ such that $\mathfrak{a}_{k\eta,0}^{D,-}=\sum_{l=0}^{(p-3)/2}c_{l}\rho_{D}(T^{l})\mathfrak{a}_{1,0}^{D,-}$. But Equation \eqref{symvec} gives \[\mathfrak{a}_{1,0}^{D,-}=\sum_{m=0}^{p-1}\frac{\zeta^{2m}-\zeta^{-2m}}{\sqrt{2p}}\mathfrak{e}_{m\eta}=\sum_{m=1}^{(p-1)/2}\frac{\zeta^{2m}-\zeta^{-2m}}{\sqrt{p}}\mathfrak{a}_{0,m}^{\{0\},-},\] $\rho_{D}(T^{l})$ multiplies the $m$th summand by $\zeta^{lm^{2}}$, and in $\mathfrak{a}_{k,0}^{D,-}$ we replace $\zeta^{\pm2m}$ by $\zeta^{\pm2km}$. Therefore our coefficients are the ones from Proposition \ref{inintbasis}, yielding their integrality.

Combining the action of $T$, it thus follows that $\rho_{D}(T^{-k})\mathfrak{a}_{k\eta,0}^{D,-}$ is also spanned by our basis with integral coefficients, and since for $l>0$ a similar application of Proposition \ref{actofS} (using Remark \ref{rhoSsym} again) identifies $\rho_{D}(S)\rho_{D}(T^{l})\mathfrak{a}_{\eta,0}^{D,-}$ with this vector (for the appropriate $k$) up to a root of unity, the integrality of the action of $S$ on these vectors is established.

Finally, for $\rho_{D}(S)\mathfrak{a}_{1,0}^{D,-}$, which is a root of unity times $\mathfrak{a}_{0,1}^{\{0\},-}$ (via Proposition \ref{propsym}), writing it again as $\sum_{l=0}^{(p-3)/2}c_{l}\rho_{D}(T^{l})\mathfrak{a}_{1,0}^{D,-}$ and expanding using the $\mathfrak{a}_{0,m}^{\{0\},-}$'s, the coefficients become those of $c\prod_{j=2}^{(p-1)/2}(X-\zeta^{j^{2}})$ (now with $c:=c_{(p-3)/2}$), and since $c$ now satisfies $1=\frac{c}{\sqrt{p}}\prod_{j=2}^{(p-1)/2}(\zeta-\zeta^{j^{2}})$, we deduce that $c$ is in $\mathbb{Z}[\zeta]$ (and now generates $\mathfrak{p}$), and $S$ acts integrally in our basis as well. This proves the lemma.
\end{proof}
In fact, one can prove Lemma \ref{expzeta=1} with even polynomials and $h\geq0$, with the initial condition where $f_{m,0}^{(r)}$ equals 2 for $r=0$ and $\frac{m}{r}\binom{m-1+r}{2r-1}$ when $r>0$. The one can replace $\varepsilon_{m}$ by $\frac{\zeta^{2km}+\zeta^{-2km}}{\zeta^{2m}+\zeta^{-2m}}$ and set  $\varphi_{m,h}(\zeta)=\frac{\zeta^{2m}+\zeta^{-2m}}{\zeta^{2h}+\zeta^{-2h}}\prod_{j=0}^{h-1}\frac{\zeta^{m^{2}}-\zeta^{j^{2}}}{\zeta^{h^{2}}-\zeta^{j^{2}}}$ in Lemma \ref{withflhr} (also for $h=0$). This proves Proposition \ref{inintbasis} with $0 \leq m\leq\frac{p-1}{2}$, a plus sign between the powers of $\zeta$, and $0 \leq l\leq\frac{p-1}{2}$, which implies that $\mathfrak{a}_{k,0}^{D,+}$ is integrally spanned by $\rho_{D}(T^{l})\mathfrak{a}_{1,0}^{D,+}$ with $0 \leq l\leq\frac{p-1}{2}$. Then the proof of the even part of Lemma \ref{irrodd} will also work with the latter basis, though with the one arising from $\mathfrak{a}_{0,0}^{D,+}$ the proof from \cite{[Wa]} is simpler.

\smallskip

We can now treat more general anisotropic discriminant forms.
\begin{prop}
Let $D$ be an anisotropic discriminant form of prime power cardinality. Then $\mathbb{C}[D]$ admits a basis in which $\rho_{D}$ acts with algebraic integer coefficients. \label{aniso}
\end{prop}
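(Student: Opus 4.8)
The plan is to reduce the statement, via the classification of anisotropic discriminant forms together with the behaviour of $\rho_{D}$ under orthogonal direct sums, to a handful of explicit ``atoms'', and then to settle each atom using a result already established. Since $|D|$ is a prime power, $D$ is a quadratic module over a single prime $p$, and it decomposes as an orthogonal direct sum of indecomposable pieces; each piece is itself anisotropic, and the classification (see e.g.\ \cite{[Zh]}) tells us that the indecomposable anisotropic forms are exactly the following: for $p$ odd, the forms $p^{\pm1}$ on $\mathbb{Z}/p$; for $p=2$, the two forms on $\mathbb{Z}/2$ (with $\tfrac{\gamma^{2}}{2}=\pm\tfrac14$), the four forms on $\mathbb{Z}/4$ (with $\tfrac{\gamma^{2}}{2}\in\{\tfrac18,\tfrac38,\tfrac58,\tfrac78\}$), and the even Klein form $2^{-2}_{II}$ on $(\mathbb{Z}/2)^{2}$. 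It therefore suffices to produce a basis with algebraic integer action for $\rho_{D}$ in each of these cases.

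For the reduction to atoms I would use that for an orthogonal splitting $D=D_{1}\oplus D_{2}$ the evident identification $\mathbb{C}[D]\cong\mathbb{C}[D_{1}]\otimes\mathbb{C}[D_{2}]$, $\mathfrak{e}_{(\gamma_{1},\gamma_{2})}\mapsto\mathfrak{e}_{\gamma_{1}}\otimes\mathfrak{e}_{\gamma_{2}}$, carries $\rho_{D}$ to $\rho_{D_{1}}\otimes\rho_{D_{2}}$ as representations of $\operatorname{Mp}_{2}(\mathbb{Z})$: this is checked on the generators $T$ and $S$ from Equation \eqref{Weildef} using $\operatorname{sgn}D=\operatorname{sgn}D_{1}+\operatorname{sgn}D_{2}$, $|D|=|D_{1}|\cdot|D_{2}|$, and the additivity of the bilinear form. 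Consequently, if $\rho_{D_{1}}$ and $\rho_{D_{2}}$ act on suitable bases by matrices with algebraic integer entries on the generators (hence on all of $\operatorname{Mp}_{2}(\mathbb{Z})$, products of such matrices being of the same kind), then $\rho_{D}$ acts on the tensor-product basis by the Kronecker products of those matrices, which again have algebraic integer entries. Hence the general anisotropic prime-power case follows once the atoms are handled.

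The atoms $p^{\pm1}$ with $p$ odd are exactly the content of Lemma \ref{irrodd}. For the four $\mathbb{Z}/4$-atoms and for $2^{-2}_{II}$ I would exhibit a self-dual quasi-isotropic subgroup and apply Theorem \ref{qisotriv}: if $\gamma$ generates $\mathbb{Z}/4$ then $H=\langle 2\gamma\rangle$ has order $2$, with $(2\gamma,2\gamma)=4\cdot\tfrac{\gamma^{2}}{2}=0$ while $\tfrac{(2\gamma)^{2}}{2}=\tfrac12\neq0$, so $H$ is quasi-isotropic but not isotropic, and $H\subseteq H^{\perp}$ together with $|H^{\perp}|=|D|/|H|=2=|H|$ forces $H^{\perp}=H$; similarly in $2^{-2}_{II}$ every nonzero element has norm $\tfrac12$, so $H=\langle\tau\rangle$ for any nonzero $\tau$ is again self-dual quasi-isotropic. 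For such $H$, Theorem \ref{qisotriv} supplies the basis $\{\mathfrak{a}^{H}_{v}\}$ on which $\rho_{D}(M)\mathfrak{a}^{H}_{v}=\chi(M)\mathbf{e}\big(\tilde{Q}_{H}(M,v)\big)\mathfrak{a}^{H}_{M*v}$, where $\chi(M)$ is a product of copies of $\chi(S)=\mathbf{e}(-\operatorname{sgn}D/8)$ and hence an $8$th root of unity, and $\tilde{Q}_{H}(M,v)$ is a $\mathbb{Z}$-linear combination of the $\mathbb{Q}/\mathbb{Z}$-valued quantities $\tfrac{\eta^{2}}{2},\tfrac{\lambda^{2}}{2},(\lambda,\eta),(\xi_{H},\cdot)$; thus every $\rho_{D}(M)$ is a monomial matrix with root-of-unity entries, which is in particular integral. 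The only atoms that remain are the two $2$-dimensional representations with $|D|=2$; these $\mathbb{Z}/2$'s have no nontrivial quasi-isotropic subgroup, and I would treat them by an explicit computation, noting that $\rho_{D}(T)$ and $\rho_{D}(S)$ then have all entries in $\mathbb{Q}(i)$, so the finite image of $\rho_{D}$ preserves a full $\mathbb{Z}[i]$-lattice, hence (as $\mathbb{Z}[i]$ is a principal ideal domain) a free one, and any basis of that lattice works.

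The step I expect to be the real obstacle is the one already behind us: Lemma \ref{irrodd}, invoked here as a black box, is where all the genuine arithmetic lives. Within the present argument the points that need care are pinning down the classification precisely --- in particular checking at $p=2$ that there are no indecomposable anisotropic pieces beyond the seven listed above (for instance $\mathbb{Z}/2^{k}$ with $k\geq3$, as well as $u(2^{k})$ for all $k$ and $v(2^{k})$ for $k\geq2$, all acquire nonzero isotropic vectors), so that the tensor-product reduction is exhaustive --- and the order-$2$ base case, which, exactly like the prime case, resists the structural machinery because $|D|$ is not a perfect square and no quasi-isotropic subgroup is available.
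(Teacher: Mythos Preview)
Your argument is correct and follows the same architecture as the paper's proof: reduce to indecomposable anisotropic pieces via the tensor-product behaviour of $\rho_{D}$ under orthogonal direct sums, invoke Lemma~\ref{irrodd} for the odd-prime atoms, and dispose of the $\mathbb{Z}/4$ pieces and $2^{-2}_{II}$ via their self-dual quasi-isotropic subgroups and Theorem~\ref{qisotriv}. The paper organizes the $p=2$ case by first listing all the anisotropic $2$-power forms (including the composites $2^{+2}_{\pm2}$, $2^{+3}_{\pm3}$, $2^{+1}_{\pm1}4^{\pm1}_{t}$) rather than only the indecomposables, but then reduces exactly as you do.

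The one genuine divergence is the $|D|=2$ endgame. The paper exhibits the explicit basis $\big\{\mathfrak{a}^{D}_{0,0},\,\rho_{D}(T)\mathfrak{a}^{D}_{0,0}\big\}$ and checks by hand that $\rho_{D}(T)$ and $\rho_{D}(S)$ act on it with coefficients in $\mathbb{Z}[i]$; you instead observe that the matrices of $T$ and $S$ already lie in $\operatorname{GL}_{2}(\mathbb{Q}(i))$, the image is finite, and a finite subgroup of $\operatorname{GL}_{2}$ over the fraction field of a PID stabilizes a free lattice. Your route is cleaner and avoids arithmetic bookkeeping, at the cost of being non-constructive; the paper's route produces a concrete basis consonant with the bases used elsewhere in the paper. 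One minor wording point: you describe $\chi(M)$ as ``a product of copies of $\chi(S)$'', but the extension of $\chi$ to the cosets $T\Gamma_{\mathrm{odd}}$ and $ST\Gamma_{\mathrm{odd}}$ is not literally multiplicative; the conclusion you need, that $\chi(M)$ is an $8$th root of unity, is nonetheless correct.
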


\begin{proof}
The anisotropic discriminant forms that we have to consider were determined in, e.g., \cite{[Zh]}. When the underlying prime $p$ is odd, there are only the ones of degree $p$, and one that is 2-dimensional over $\mathbb{F}_{p}$. The former were dealt with in Lemma \ref{irrodd}, and the latter can be presented as an orthogonal direct sum, so that the representation is a tensor product of ones that are also covered by that lemma.

When $p=2$, the discriminant form $D$ is $2^{+1}_{\pm1}$, $2^{+2}_{\pm2}$, $2^{+3}_{\pm3}$, $2^{-2}_{II}$ or one of $4^{\pm1}_{t}$ and $2^{+1}_{\pm1}4^{\pm1}_{t}$ for odd $t$. Since $2^{-2}_{II}$ and $4^{\pm1}_{t}$ admit self-dual quasi-isotropic subgroups, the assertion for them follows from Theorem \ref{qisotriv}. Moreover, using the same direct sum argument, it suffices to consider the case where $D$ has order 2.

Thus assume that $D=2^{+1}_{\pm1}$, with elements 0 and 1, and as in Lemma \ref{irrodd} we take the base consisting of $\mathfrak{a}_{0,0}^{D}=\frac{1}{\sqrt{2}}(\mathfrak{e}_{0}+\mathfrak{e}_{1})$ and $\rho_{D}(T)\mathfrak{a}_{0,0}^{D}=\frac{1}{\sqrt{2}}(\mathfrak{e}_{0}\pm i\mathfrak{e}_{1})$. It is clear that $\rho_{D}(T)$ takes the first basis vector to the second, and as the image $\rho_{D}(T)^{2}\mathfrak{a}_{0,0}^{D}=\frac{1}{\sqrt{2}}(\mathfrak{e}_{0}-\mathfrak{e}_{1})$ of the second one is represented in our basis as $(1\pm i)\rho_{D}(T)\mathfrak{a}_{0,0}^{D}\mp i^{t}\mathfrak{a}_{0,0}^{D}$, the statement for $T$ is established.

As for $\rho_{D}(S)$, for the second vector we apply Proposition \ref{actofS}, with $k=-3$, and get $\frac{1}{\sqrt{2}}(\mathfrak{e}_{0}\mp i\mathfrak{e}_{1})$ (the root of unity can be evaluated directly to be just 1), which is the combination $(1\mp i)\mathfrak{a}_{0,0}^{D}\pm i\rho_{D}(T)\mathfrak{a}_{0,0}^{D}$ of our basis vectors. Finally, Lemma \ref{rhoDaH} and the evaluation of the root of unity $\mathbf{e}\big(-\frac{\operatorname{sgn}D}{8}\big)$ shows that $\rho_{D}(S)$ takes the first basis vector to $\frac{1\mp i}{\sqrt{2}}\mathfrak{e}_{0}$. As the presentation of this vector in the basis is $\rho_{D}(T)\mathfrak{a}_{0,0}^{D}\mp i^{t}\mathfrak{a}_{0,0}^{D}$, this proves assertion for $S$ as well. This completes the proof of the proposition.
\end{proof}

Combining all the results we have gathered, we can prove the main theorem of this paper.
\begin{thm}
Let $D$ be any discriminant form. Then there is a basis for the space $\mathbb{C}[D]$ on which the associated Weil representation $\rho_{D}$ operates with coefficients that are algebraic integers. \label{fin}
\end{thm}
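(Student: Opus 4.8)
The overall plan is an induction on $|D|$ that peels $\mathbb{C}[D]$ apart into $\rho_{D}$-invariant blocks, on each of which one of the structural results already proved --- Theorem~\ref{qisotriv}, the arrow reduction of Theorem~\ref{arrowJperp}, or Proposition~\ref{aniso} --- supplies a basis that $\operatorname{Mp}_{2}(\mathbb{Z})$ permutes up to roots of unity (hence with algebraic-integer matrices); a basis of $\mathbb{C}[D]$ adapted to such a block decomposition then has block-diagonal, hence integral, transition matrices.

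First I would reduce to the case that $D$ is a $p$-group. If it is not, choose a prime $p\mid|D|$ and write $D=D_{1}\perp D_{2}$ with $D_{1}$ the $p$-part and $D_{2}$ its orthogonal complement, both non-trivial. Since $|D|=|D_{1}||D_{2}|$ and $\operatorname{sgn}D=\operatorname{sgn}D_{1}+\operatorname{sgn}D_{2}$, Equation~\eqref{Weildef} gives $\mathbb{C}[D]=\mathbb{C}[D_{1}]\otimes\mathbb{C}[D_{2}]$ with $\rho_{D}=\rho_{D_{1}}\otimes\rho_{D_{2}}$ as representations of $\operatorname{Mp}_{2}(\mathbb{Z})$ (the central $Z^{2}$ acting correctly in either parity of $\operatorname{sgn}D$). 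By the inductive hypothesis each $\rho_{D_{i}}$ has an integral basis, and the tensor products of these vectors form a basis of $\mathbb{C}[D]$ on which every $\rho_{D}(M)$ is the Kronecker product of two matrices with algebraic-integer entries, hence has such entries.

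Now let $D$ be a $p$-group. If $D$ is anisotropic, Proposition~\ref{aniso} finishes the argument. Otherwise $D$ has a non-zero isotropic element, a suitable $p$-power of which generates an isotropic subgroup $J\subseteq D$ of order $p$; the key step, discussed below, is to produce a quasi-isotropic subgroup $H$ with $J\subseteq H$ and $H^{\perp}/H$ of exponent $p$. Granting this, there are two cases. If $H=H^{\perp}$ (or, more generally, $D$ has any self-dual quasi-isotropic subgroup), Theorem~\ref{qisotriv} already gives a basis $\{\mathfrak{a}_{v}^{H}\}$ of $\mathbb{C}[D]$ permuted by $\operatorname{Mp}_{2}(\mathbb{Z})$ up to roots of unity, and we are done. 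If $H\subsetneq H^{\perp}$, put $B:=J^{\perp}/J$, of order $|D|/p^{2}<|D|$; by the inductive hypothesis $\rho_{B}$ has an integral basis, and since $J$ is isotropic, $\uparrow_{J}\colon\mathbb{C}[B]\to\mathbb{C}[D]$ is an isometric embedding of representations (Corollary~\ref{arrow}), so its image is a sub-representation carrying an integral basis with the same matrices. The orthogonal complement $W=(\uparrow_{J}\mathbb{C}[B])^{\perp}$ is $\rho_{D}$-invariant, and by Corollary~\ref{basisTl} and Remark~\ref{Jperpiso} it has the orthonormal basis of vectors $\mathfrak{b}_{\eta,\lambda}^{H,J}$ from Equation~\eqref{bHJvec} with not both $\eta,\lambda\in J^{\perp}$; by Theorem~\ref{arrowJperp} the action of $\rho_{D}(M)$ permutes these vectors up to $8$th roots of unity, so this basis of $W$ is integral. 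Concatenating the integral bases of $\uparrow_{J}\mathbb{C}[B]$ and of $W$ closes the induction.

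The main obstacle is thus the existence of the subgroup $H$: a quasi-isotropic $H\supseteq J$ with $H^{\perp}/H$ of exponent $p$. For $p$ odd I would take $H$ to be a maximal isotropic subgroup containing $J$; then $H^{\perp}/H$ is anisotropic, and by the classification of anisotropic $p$-groups for odd $p$ (cf.\ \cite{[Zh]}) it is either trivial, cyclic of order $p$, or two-dimensional over $\mathbb{F}_{p}$, so of exponent dividing $p$. For $p=2$ a maximal isotropic subgroup may leave a quotient of exponent $4$, namely one involving the anisotropic summands $4^{\pm1}_{t}$ or $2^{+1}_{\pm1}4^{\pm1}_{t}$; here I would enlarge $H$ (still containing $J$) by adjoining the order-$2$ subgroup $\langle 2\eta\rangle$ of each such summand, which is quasi-isotropic and pairs trivially with everything else, so that the enlarged $H$ is still quasi-isotropic while $H^{\perp}/H$ no longer contains these parts and is anisotropic of exponent $2$ (using that $\langle 2\eta\rangle$ is self-dual inside $4^{\pm1}_{t}$ and that $\langle 2\eta\rangle^{\perp}/\langle 2\eta\rangle\cong 2^{+1}_{\pm1}$ inside $2^{+1}_{\pm1}4^{\pm1}_{t}$). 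This is the one point where the fine structure of $2$-adic discriminant forms, rather than the soft steps above, has to be invoked.
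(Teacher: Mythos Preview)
Your argument is correct and follows essentially the same route as the paper: reduce to $p$-groups by tensoring, handle the anisotropic case via Proposition~\ref{aniso}, and in the presence of isotropic vectors split $\mathbb{C}[D]$ as $\uparrow_{J}\mathbb{C}[B]$ (handled inductively) plus its complement (handled by Theorem~\ref{arrowJperp} and Remark~\ref{Jperpiso}), falling back on Theorem~\ref{qisotriv} when $H$ is self-dual. The only cosmetic differences are that the paper inducts on the size of a maximal isotropic subgroup rather than on $|D|$, and it obtains your subgroup $H$ in one stroke by taking a \emph{maximal quasi-isotropic} subgroup (which automatically forces $H^{\perp}/H$ to have exponent $p$, absorbing your ad~hoc $p=2$ enlargement); note also that the scalars in Theorem~\ref{arrowJperp} are roots of unity but not literally $8$th roots, since the factor $\mathbf{e}\big(Q(M,v)\big)$ and the relations of Corollary~\ref{basisTl} contribute as well---this does not affect integrality.
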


\begin{proof}
Decomposing $\rho_{D}$ as the tensor product of the Weil representations arising from these $p$-adic parts, it suffices to consider the case where $D$ has prime power level. In this case we apply induction on the cardinality of the maximal isotropic subgroup of $D$, where the case in which such $H$ is trivial is proved in Proposition \ref{aniso}. Consider now the case where $D$ contains isotropic vectors, and let $H$ be a maximal quasi-isotropic subgroup of $D$. Observing that the anisotropic discriminant forms $2^{-2}_{II}$, $4^{\pm1}_{t}$, and $2^{+1}_{\pm1}4^{\pm1}_{t}$ from Proposition \ref{aniso} contain quasi-isotropic elements, we deduce that the quotient $A:=H^{\perp}/H$ is either trivial or of prime exponent $p$.

Now, when $A$ is trivial the result follows directly from the integrality of the coefficients from Theorems \ref{genact} and \ref{qisotriv}, combined with the fact that the extra roots of unity obtained by Lemma \ref{ortho} when one takes a basis as in that lemma are also integral. Assuming now that $A$ is non-trivial, and so is $H$, we take an isotropic subgroup $J$ of $H$, of cardinality $p$, set $B:=J^{\perp}/J$, and recall that Corollary \ref{arrow} embeds the representation $\mathbb{C}[B]$ into $\mathbb{C}[D]$ via $\uparrow_{J}$. Then Remark \ref{Jperpiso} explains, via the same argument, how Theorem \ref{arrowJperp} yields a basis for the orthogonal complement of $\uparrow_{J}\mathbb{C}[B]$ inside $\mathbb{C}[D]$ in which the coefficients are algebraic integers, and it remains to obtain such a basis for $\uparrow_{J}\mathbb{C}[B]$ itself. But as Corollary \ref{arrow} shows that $\uparrow_{J}$ is an embedding of representations, and a maximal isotropic subgroup in $B$ is $H/J$ which is of smaller cardinality, this part is covered by the induction hypothesis. This proves the theorem.
\end{proof}

We conclude this section by remarking that while Theorems \ref{genact} and \ref{qisotriv} make the formulae for the action of every element in that basis very explicit, we can no longer do that in the general case considered in Theorem \ref{fin}. Indeed, when one $p$-part involves a non-trivial anisotropic quotient, the formulae resulting from Lemma \ref{irrodd} and Proposition \ref{aniso} become complicated when one tries to consider a general element of $\operatorname{Mp}_{2}(\mathbb{Z})$, and Remark \ref{rootunit} shows that the extra parameter from Theorem \ref{arrowJperp}, in the corresponding complement, can also be hard to evaluate.

\section{Invariant Vectors in Weil Representations \label{InvVect}}

Let $D$ be an arbitrary discriminant form, with the associated Weil representation $\rho_{D}$ of $\operatorname{Mp}_{2}(\mathbb{Z})$ on the space $\mathbb{C}[D]$. A natural question would be to determine the subspace $\mathbb{C}[D]^{\mathrm{inv}}$ of $\mathbb{C}[D]$ on which $\rho_{D}$ operates trivially, and investigate its properties. The paper \cite{[ES]} established one property, which is related to the integrality questions considered in this paper, by showing that $\mathbb{C}[D]^{\mathrm{inv}}$ can always be defined over $\mathbb{Z}$.

\smallskip

Two well-known and basic conditions that make $\mathbb{C}[D]^{\mathrm{inv}}$ trivial are as follows.
\begin{lem}
If $\operatorname{sgn}D$ is odd then $\mathbb{C}[D]^{\mathrm{inv}}=\{0\}$. In addition, let $D_{\mathrm{iso}}$ denote the set of isotropic elements of $D$. Then if $D \setminus D_{\mathrm{iso}}$ surjects onto the quotient $D/D_{\mathrm{iso}}^{\perp}$ then $\mathbb{C}[D]^{\mathrm{inv}}=\{0\}$ as well. \label{invtriv}
\end{lem}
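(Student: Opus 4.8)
The plan is to treat the two assertions separately; the first is essentially immediate, and the second reduces, after two easy invariance computations, to linear independence of characters.

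For the first assertion, recall from Section~\ref{Subgp} that when $\operatorname{sgn}D$ is odd the central element $Z^{2}\in\operatorname{Mp}_{2}(\mathbb{Z})$, which generates the kernel of the projection to $\operatorname{SL}_{2}(\mathbb{Z})$, acts on $\mathbb{C}[D]$ as $-\operatorname{Id}$. Concretely, iterating Equation~\eqref{Weildef} and using orthogonality of characters on $D$ shows that $\rho_{D}(S)^{2}$ sends $\mathfrak{e}_{\gamma}$ to $\mathbf{e}(-\operatorname{sgn}D/4)\mathfrak{e}_{-\gamma}$, hence $\rho_{D}(S^{4})=\rho_{D}(Z^{2})=\mathbf{e}(-\operatorname{sgn}D/2)\operatorname{Id}$, which equals $-\operatorname{Id}$ exactly when $\operatorname{sgn}D$ is odd. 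Any $v\in\mathbb{C}[D]^{\mathrm{inv}}$ then satisfies $v=\rho_{D}(Z^{2})v=-v$, so $v=0$.

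For the second assertion, let $v=\sum_{\gamma\in D}c_{\gamma}\mathfrak{e}_{\gamma}$ be an invariant vector. Invariance under $\rho_{D}(T)$ and Equation~\eqref{Weildef} give $c_{\gamma}\big(\mathbf{e}(\tfrac{\gamma^{2}}{2})-1\big)=0$ for every $\gamma$, so $c_{\gamma}=0$ unless $\gamma\in D_{\mathrm{iso}}$; thus $v$ is supported on $D_{\mathrm{iso}}$. Comparing the coefficient of $\mathfrak{e}_{\delta}$ in $\rho_{D}(S)v=v$ then yields $\tfrac{\mathbf{e}(-\operatorname{sgn}D/8)}{\sqrt{|D|}}\sum_{\gamma\in D_{\mathrm{iso}}}c_{\gamma}\mathbf{e}\big(-(\gamma,\delta)\big)=c_{\delta}$, and for $\delta\notin D_{\mathrm{iso}}$ the right-hand side vanishes, so $\sum_{\gamma\in D_{\mathrm{iso}}}c_{\gamma}\mathbf{e}\big(-(\gamma,\delta)\big)=0$ for every $\delta\in D\setminus D_{\mathrm{iso}}$.

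The one step requiring care is upgrading this to a vanishing for \emph{all} $\delta\in D$. Since every $\gamma$ occurring in the sum lies in the subgroup $\langle D_{\mathrm{iso}}\rangle$, whose orthogonal complement is exactly $D_{\mathrm{iso}}^{\perp}$, the function $\delta\mapsto\sum_{\gamma\in D_{\mathrm{iso}}}c_{\gamma}\mathbf{e}\big(-(\gamma,\delta)\big)$ factors through $D/D_{\mathrm{iso}}^{\perp}$; the hypothesis that $D\setminus D_{\mathrm{iso}}\hookrightarrow D\twoheadrightarrow D/D_{\mathrm{iso}}^{\perp}$ is surjective then forces this function to vanish identically on $D/D_{\mathrm{iso}}^{\perp}$, hence on $D$. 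Finally, by non-degeneracy of the bilinear form the maps $\delta\mapsto\mathbf{e}\big(-(\gamma,\delta)\big)$ are pairwise distinct characters of $D$, so their linear independence forces $c_{\gamma}=0$ for every $\gamma\in D_{\mathrm{iso}}$; combined with the $T$-invariance this gives $v=0$, so $\mathbb{C}[D]^{\mathrm{inv}}=\{0\}$. There is no genuine obstacle here beyond correctly bookkeeping the passage to the quotient $D/D_{\mathrm{iso}}^{\perp}$ so that the surjectivity hypothesis is invoked in the right place.
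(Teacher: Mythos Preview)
Your proof is correct and follows essentially the same approach as the paper's own proof: both use the action of $Z^{2}$ for the first assertion, and for the second both first restrict to $D_{\mathrm{iso}}$ via $T$-invariance, then use $S$-invariance to obtain the vanishing of $\sum_{\gamma}c_{\gamma}\mathbf{e}(-(\gamma,\delta))$ for non-isotropic $\delta$, and finally exploit that this expression depends only on the coset of $\delta$ modulo $D_{\mathrm{iso}}^{\perp}$ together with the surjectivity hypothesis. The only cosmetic difference is that the paper counts the resulting relations as $|D/D_{\mathrm{iso}}^{\perp}|=|D^{\mathrm{iso}}|$ linearly independent conditions on as many unknowns, whereas you phrase the same conclusion via factoring through the quotient and linear independence of characters.
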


\begin{proof}
The first assertion follows immediately from the fact that $Z^{2}\in\operatorname{Mp}_{2}(\mathbb{Z})$ always acts as the scalar $(-1)^{\operatorname{sgn}D}$. For the second one, the action of $\rho_{D}(T)$ implies that $\mathbb{C}[D]^{\mathrm{inv}}\subseteq\bigoplus_{\gamma \in D_{\mathrm{iso}}}\mathbb{C}\mathfrak{e}_{\gamma}$, which we embed into $\bigoplus_{\gamma \in D^{\mathrm{iso}}}\mathbb{C}\mathfrak{e}_{\gamma}$ where $D^{\mathrm{iso}}$ denotes the subgroup of $D$ that is generated by $D_{\mathrm{iso}}$. Since $\rho_{D}(S)$ must preserve $\mathbb{C}[D]^{\mathrm{inv}}$, we deduce that $\mathbb{C}[D]^{\mathrm{inv}}$ is contained in the subspace of $\bigoplus_{\gamma \in D^{\mathrm{iso}}}\mathbb{C}\mathfrak{e}_{\gamma}$ that is taken to $\bigoplus_{\gamma \in D_{\mathrm{iso}}}\mathbb{C}\mathfrak{e}_{\gamma}$ by $\rho_{D}(S)$.

The formula for $\rho_{D}(S)$ from Equation \eqref{Weildef} thus shows that if $\sum_{\gamma \in D^{\mathrm{iso}}}c_{\gamma}\mathfrak{e}_{\gamma}$ is in the latter space then $\sum_{\gamma \in D^{\mathrm{iso}}}c_{\gamma}\mathbf{e}\big(-(\gamma,\delta)\big)=0$ for every $\delta \notin D_{\mathrm{iso}}$. But elements in the same coset modulo $D_{\mathrm{iso}}^{\perp}$ give the same relation, while a collection of different cosets yields linearly independent relations. Thus the surjectivity condition amounts to the elements of $D/D_{\mathrm{iso}}^{\perp}$ producing all the $|D/D_{\mathrm{iso}}^{\perp}|=|D^{\mathrm{iso}}|$ linearly independent conditions, meaning that the space in question is $\{0\}$, and thus so is $\mathbb{C}[D]^{\mathrm{inv}}$. This proves the lemma.
\end{proof}

\begin{rmk}
If $D_{\mathrm{iso}}$ is a subgroup of $D$ such that the inclusion in $D_{\mathrm{iso}}^{\perp}$ is strict (e.g., when $D$ is anisotropic and non-trivial, or cyclic of non-square order), then the surjectivity condition from Lemma \ref{invtriv} holds, and we get $\mathbb{C}[D]^{\mathrm{inv}}=\{0\}$. Moreover, the signature condition implies that wherever $\mathbb{C}[D]^{\mathrm{inv}}\neq\{0\}$, the Weil representation $\rho_{D}$ is a representation of $\operatorname{SL}_{2}(\mathbb{Z})$, without double covers. \label{condinv}
\end{rmk}

\smallskip

In contrast to Lemma \ref{invtriv} and Remark \ref{condinv}, when $D$ contains a self-dual isotropic subgroup $H$, the representation $\mathbb{C}[A]$ for $A=H^{\perp}/H$ is trivial, and its image under the operator from Corollary \ref{arrow} lies in $\mathbb{C}[D]^{\mathrm{inv}}$. Moreover, it is known that in this case $\mathbb{C}[D]^{\mathrm{inv}}$ is spanned by these images for the various self-dual isotropic subgroups of $D$. This was proved as Theorem 5.5.7 of \cite{[NRS]} in the language of codes and mentioned as Theorem 1 of \cite{[ES]}, and the proof was translated to our terminology in Theorem 4.4 of \cite{[Bi]}. Note that some discriminant forms need not contain self-dual isotropic subgroups at all, but still admit invariant vectors---see, e.g., Theorem \ref{Fpvs} below.

These results, however, do not say much about the dimension of $\mathbb{C}[D]^{\mathrm{inv}}$ (or alternatively the number of such subgroups $H$ and the dimension of relations between their images). We now demonstrate how the explicit formulae from Theorems \ref{genact}, \ref{qisotriv}, and \ref{arrowJperp} can be used for determining the dimension of $\mathbb{C}[D]^{\mathrm{inv}}$ in many cases. We only present two families, one involving self-dual isotropic subgroups in a strong way, and the other not necessarily having ones at all.

\smallskip

For defining the first family, let $G$ be any finite Abelian group, with dual $G^{*}$ (which is thus isomorphic to $G$ as an Abelian group, although not canonically). Then the \emph{generalized hyperbolic plane associated with $G$} is the discriminant form $U_{G}:=G \oplus G^{*}$, where given $\gamma \in G$ and $\phi \in G^{*}=\operatorname{Hom}(G,\mathbb{Z})$, the element $\gamma+\phi \in U_{G}$ satisfies $\frac{\gamma+\phi}{2}=\phi(\gamma)$. For obtaining the formula for $\dim\mathbb{C}[U_{G}]^{\mathrm{inv}}$ we shall use the following simple lemma. We denote Euler's totient function as usual by $\varphi(n):=\prod_{p|n}\big(1-\frac{1}{p}\big)$.
\begin{lem}
Take two numbers $n$ and $m$, with $m|n$, consider the Abelian group $\mathbb{Z}/n\mathbb{Z}\oplus\mathbb{Z}/m\mathbb{Z}$, and let $X_{n,m}$ be the set of pairs of elements in that group that generate it. Then the group $\operatorname{SL}_{2}(\mathbb{Z}/n\mathbb{Z})$ acts on $X_{n,m}$ with $\varphi(m)$ orbits. \label{SLnongen}
\end{lem}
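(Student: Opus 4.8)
The plan is to reduce the question to a transitive action on primitive vectors. First I would observe that $X_{n,m}$ is precisely the set of surjective homomorphisms from $\mathbb{Z}^{2}$ onto $G:=\mathbb{Z}/n\mathbb{Z}\oplus\mathbb{Z}/m\mathbb{Z}$, where a pair $(x,y)\in G^{2}$ corresponds to the map sending the two standard generators of $\mathbb{Z}^{2}$ to $x$ and $y$. The group $\operatorname{SL}_{2}(\mathbb{Z}/n\mathbb{Z})=\operatorname{SL}_{2}(\mathbb{Z})/n\operatorname{SL}_{2}(\mathbb{Z})$ acts on $X_{n,m}$ by precomposition (this is well-defined since $nG=\{0\}$ annihilates the ambiguity in lifting to $\operatorname{SL}_{2}(\mathbb{Z})$, because $m\mid n$), and two surjections lie in the same orbit if and only if they have the same kernel as a subgroup of $\mathbb{Z}^{2}$. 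Indeed, if two surjections $f_{1},f_{2}\colon\mathbb{Z}^{2}\twoheadrightarrow G$ have the same kernel $K$, then they differ by an automorphism of $G$, but we need the precomposition by an element of $\operatorname{SL}_{2}$, not an arbitrary automorphism of $G$ — so the orbit count is not literally ``number of kernels.'' Let me restructure.

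The cleaner route: the orbits of $\operatorname{SL}_{2}(\mathbb{Z})$ (equivalently $\operatorname{SL}_{2}(\mathbb{Z}/n\mathbb{Z})$) on $X_{n,m}$ are in bijection with the sublattices $K\subseteq\mathbb{Z}^{2}$ of full rank with $\mathbb{Z}^{2}/K\cong G$, up to the $\operatorname{SL}_{2}(\mathbb{Z})$-action on such sublattices. By the theory of Smith normal form, every such $K$ is $\operatorname{GL}_{2}(\mathbb{Z})$-equivalent to the diagonal sublattice $m\mathbb{Z}\oplus n\mathbb{Z}$ (the elementary divisors of $G$ are $m\mid n$, which are invariants of $G$ as an abstract group, not of the surjection), so there is a single $\operatorname{GL}_{2}(\mathbb{Z})$-orbit of sublattices. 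The number of $\operatorname{SL}_{2}(\mathbb{Z})$-orbits is therefore the index $[\operatorname{Stab}_{\operatorname{GL}_{2}(\mathbb{Z})}(K):\operatorname{Stab}_{\operatorname{SL}_{2}(\mathbb{Z})}(K)]$, which divides $[\operatorname{GL}_{2}(\mathbb{Z}):\operatorname{SL}_{2}(\mathbb{Z})]=2$; but this gives at most $2$, not $\varphi(m)$, so a sublattice $K$ does \emph{not} determine an orbit on $X_{n,m}$. The missing data is exactly the choice of isomorphism $\mathbb{Z}^{2}/K\xrightarrow{\sim}G$ modulo the automorphisms of $G$ realized by $\operatorname{SL}_{2}(\mathbb{Z})$-stabilizer of $K$; more precisely, fixing $K=m\mathbb{Z}\oplus n\mathbb{Z}$ so that $\mathbb{Z}^{2}/K=\mathbb{Z}/m\mathbb{Z}\oplus\mathbb{Z}/n\mathbb{Z}$ canonically, orbits correspond to $\operatorname{Aut}(G)$ modulo the image of $\operatorname{Stab}_{\operatorname{SL}_{2}(\mathbb{Z})}(K)\to\operatorname{Aut}(\mathbb{Z}^{2}/K)$.

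So the key computation is: for the fixed sublattice $K=m\mathbb{Z}\oplus n\mathbb{Z}\subseteq\mathbb{Z}^{2}$ (writing $G=(\mathbb{Z}/m\mathbb{Z})\oplus(\mathbb{Z}/n\mathbb{Z})$ accordingly), determine the image $\Sigma$ of $\{A\in\operatorname{SL}_{2}(\mathbb{Z}):AK=K\}$ inside $\operatorname{Aut}(G)$, and show $|\operatorname{Aut}(G)/\Sigma\cdot(\text{inner ambiguity})|$ — more carefully, $|X_{n,m}/\operatorname{SL}_{2}(\mathbb{Z})|=|\operatorname{Aut}(G)|/|\Sigma'|$ where $\Sigma'$ is the image in $\operatorname{Aut}(G)$ of the full stabilizer — equals $\varphi(m)$. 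Concretely, $A=\bigl(\begin{smallmatrix}a&b\\c&d\end{smallmatrix}\bigr)$ preserves $K=m\mathbb{Z}\oplus n\mathbb{Z}$ iff $\frac{n}{m}\mid b$ and $\frac{n}{m}\cdot\frac{?}{?}$ — I would work out that $A\in\operatorname{SL}_{2}(\mathbb{Z})$ stabilizes $K$ iff $b\equiv0\pmod{n/m}$ (with $c$ arbitrary), and then the induced map on $G$ is $\bar a$ on the $\mathbb{Z}/m\mathbb{Z}$-factor together with $\bar d$ on the $\mathbb{Z}/n\mathbb{Z}$-factor plus off-diagonal terms, with $ad\equiv1$. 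Since $\det A=1$ forces $ad\equiv1$ modulo everything, the diagonal part of the induced automorphism is constrained: $a\bmod m$ can be an arbitrary unit, but then $d\bmod m$ is forced to be its inverse, whereas $d\bmod n$ is unconstrained among units reducing to $a^{-1}\bmod m$. Comparing with $\operatorname{Aut}(G)$, whose ``diagonal torus'' is $(\mathbb{Z}/m\mathbb{Z})^{\times}\times(\mathbb{Z}/n\mathbb{Z})^{\times}$ and which acts with off-diagonal unipotents that are already in the image of $\Sigma$, the cokernel is $(\mathbb{Z}/m\mathbb{Z})^{\times}/\{\pm1\text{-type constraint}\}$ — I expect it to come out to $(\mathbb{Z}/m\mathbb{Z})^{\times}$ itself, of order $\varphi(m)$, after checking that the $\det=1$ condition kills exactly the ``freedom'' on the $\mathbb{Z}/m$-factor relative to $\operatorname{GL}_{2}$.

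\textbf{Main obstacle.} The delicate bookkeeping is in the last step: correctly identifying the image of the $\operatorname{SL}_{2}(\mathbb{Z})$-stabilizer of $K$ inside $\operatorname{Aut}(G)$ and computing the index, being careful that (i) elementary divisors are ordered $m\mid n$, (ii) the $\det=1$ constraint is the only thing distinguishing the count from the single $\operatorname{GL}_{2}$-orbit, and (iii) the off-diagonal (unipotent) automorphisms of $G$, which exist precisely because $m\mid n$, are all hit. An alternative, possibly cleaner, execution is to count orbits directly: every element of $X_{n,m}$ is $\operatorname{SL}_{2}(\mathbb{Z}/n\mathbb{Z})$-equivalent to one of the form $\bigl((1,0),(0,u)\bigr)$ with $u\in(\mathbb{Z}/m\mathbb{Z})^{\times}$ — use the $\operatorname{SL}_2$-action to clear the first coordinate to a generator $(1,0)$ of a rank-one direct summand, then reduce the second coordinate modulo that summand — and then show $\bigl((1,0),(0,u)\bigr)$ and $\bigl((1,0),(0,u')\bigr)$ are in the same orbit iff $u=u'$ in $(\mathbb{Z}/m\mathbb{Z})^{\times}$, by analysing which $A$ fix $(1,0)$ up to the relevant equivalence and tracking the determinant. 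This direct approach is likely the one the author intends, and the genuine content is precisely the ``iff $u=u'$'' rigidity, which again boils down to the determinant obstruction on the residual $\mathbb{Z}/m\mathbb{Z}$-coordinate.
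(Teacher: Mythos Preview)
Your alternative route at the end---reduce every generating pair to the form $\bigl((1,0),(0,u)\bigr)$ with $u\in(\mathbb{Z}/m\mathbb{Z})^{\times}$ and then argue that distinct $u$'s give distinct orbits via the determinant---is exactly the paper's approach, and you correctly flag it as the intended one. The paper executes it very compactly: it lifts $X_{n,m}$ to $X_{n,n}$, identifies the latter with $\operatorname{GL}_{2}(\mathbb{Z}/n\mathbb{Z})$ acting on itself, writes $\operatorname{GL}_{2}(\mathbb{Z}/n\mathbb{Z})=\bigcup_{h\in(\mathbb{Z}/n\mathbb{Z})^{\times}}\operatorname{SL}_{2}(\mathbb{Z}/n\mathbb{Z})\cdot\bigl(\begin{smallmatrix}1&0\\0&h\end{smallmatrix}\bigr)$, projects to $X_{n,m}$ (collapsing the index set to $(\mathbb{Z}/m\mathbb{Z})^{\times}$), and checks distinctness by projecting further to $X_{m,m}$ where the determinant separates them. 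This bypasses all the sublattice and $\operatorname{Aut}(G)$ bookkeeping you wrestle with in your first approach.

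Your first approach, via kernels $K\subseteq\mathbb{Z}^{2}$ and the image of $\operatorname{Stab}_{\operatorname{SL}_{2}(\mathbb{Z})}(K)$ in $\operatorname{Aut}(G)$, is a genuinely different and more structural argument. It would work, but as you yourself note, the bookkeeping is delicate: you must check that the unipotent part of $\operatorname{Aut}(G)$ is entirely hit and that the torus quotient is exactly $(\mathbb{Z}/m\mathbb{Z})^{\times}$ with no residual $\pm1$ issue. The paper's lift-to-$\operatorname{GL}_{2}$ trick sidesteps this entirely, because the coset decomposition of $\operatorname{GL}_{2}$ by $\operatorname{SL}_{2}$ \emph{is} the determinant, so the answer $\varphi(m)$ drops out in one line. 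What your approach buys is a clearer conceptual picture of \emph{why} the invariant is a determinant in $(\mathbb{Z}/m\mathbb{Z})^{\times}$; what the paper's buys is brevity.
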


\begin{proof}
We write our group as a quotient of $\mathbb{Z}/n\mathbb{Z}\oplus\mathbb{Z}/n\mathbb{Z}\cong\operatorname{M}_{2}(\mathbb{Z}/n\mathbb{Z})$. Then $X_{n,m}$ is the quotient of $X_{n,n}\cong\operatorname{GL}_{2}(\mathbb{Z}/n\mathbb{Z})$, on which $\operatorname{GL}_{2}(\mathbb{Z}/n\mathbb{Z})$ acts transitively. Now, since $X_{n,n}$ can be presented as $\bigcup_{h\in(\mathbb{Z}/n\mathbb{Z})^{\times}}\operatorname{SL}_{2}(\mathbb{Z}/n\mathbb{Z})\binom{1\ \ 0}{0\ \ h}$, and we project this onto $X_{n,m}$. This gives $\varphi(m)$ orbits with $h\in(\mathbb{Z}/m\mathbb{Z})^{\times}$, and since they are clearly distinct after projecting onto $X_{m,m}$ and working modulo $m$, they are distinct in $X_{n,m}$. This proves the lemma.
\end{proof}

The dimension of $\mathbb{C}[D]^{\mathrm{inv}}$ for $D=U_{G}$ for such $G$ can now be determined.
\begin{thm}
Take $D$ to be the generalized hyperbolic plane $U_{G}$ that is associated with the finite Abelian group $G$. For every pair of positive integers $m$ and $n$ with $m|n$, denote by $S_{n,m}(G)$ the number of subgroups of $G$ that are isomorphic to $\mathbb{Z}/n\mathbb{Z}\oplus\mathbb{Z}/m\mathbb{Z}$. Then we have $\dim\mathbb{C}[D]^{\mathrm{inv}}=\sum_{0<m|n}S_{n,m}(G)\varphi(m)$. \label{hypplane}
\end{thm}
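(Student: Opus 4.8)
The plan is to realise $\rho_{U_{G}}$, in a suitable basis, as an honest permutation representation of $\operatorname{SL}_{2}(\mathbb{Z})$, and then reduce the dimension count to counting orbits, where Lemma \ref{SLnongen} does the work.

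First I would check that $H:=G\oplus\{0\}$ is a self-dual isotropic subgroup of $D=U_{G}$: the quadratic form vanishes on $H$ by the definition of $U_{G}$, and the bilinear form $(\gamma_{1}+\phi_{1},\gamma_{2}+\phi_{2})=\phi_{1}(\gamma_{2})+\phi_{2}(\gamma_{1})$ immediately gives $H^{\perp}=H$. A one-line Milgram computation (the sum $\sum_{\phi\in G^{*}}\mathbf{e}(\phi(\gamma))$ is $|G|$ for $\gamma=0$ and $0$ otherwise) shows $\operatorname{sgn}U_{G}=0$, so $\rho_{U_{G}}$ is a representation of $\operatorname{SL}_{2}(\mathbb{Z})$ and Theorem \ref{genact} applies. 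I would then take the subgroup $\{0\}\oplus G^{*}$ as the transversal $\mathfrak{R}$ for both $D/H^{\perp}$ and $D/H$ in Lemma \ref{ortho}, so that $\{\mathfrak{a}_{v}^{H}\}$ with $v\in(\{0\}\oplus G^{*})^{2}$ is an orthonormal basis of $\mathbb{C}[D]$. The key observation is that $\{0\}\oplus G^{*}$ is a subgroup of $D$, hence stable under the coordinate action of $\operatorname{SL}_{2}(\mathbb{Z})$ on $D^{2}$, and that the quadratic and bilinear forms vanish identically on it; therefore $Q(M,v)=0$ for all such $v$ and all $M$, and Theorem \ref{genact} collapses to $\rho_{U_{G}}(M)\mathfrak{a}_{v}^{H}=\mathfrak{a}_{Mv}^{H}$. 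Thus $\rho_{U_{G}}$ is isomorphic, as an $\operatorname{SL}_{2}(\mathbb{Z})$-representation, to the permutation representation $\mathbb{C}[X]$ on the set $X:=\mathbb{Z}^{2}\otimes_{\mathbb{Z}}G^{*}$.

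Consequently $\dim\mathbb{C}[D]^{\mathrm{inv}}$ equals the number of $\operatorname{SL}_{2}(\mathbb{Z})$-orbits on $X$, since the invariants of a permutation representation are spanned by the orbit sums. To count the orbits I would stratify $X=\operatorname{Hom}(\mathbb{Z}^{2},G^{*})$ by the image subgroup $K\subseteq G^{*}$, which $\operatorname{SL}_{2}(\mathbb{Z})$ preserves: an element of $X$ with image $K$ is precisely a surjection $\mathbb{Z}^{2}\twoheadrightarrow K$, i.e.\ a generating pair of $K$. Writing $K\cong\mathbb{Z}/n\mathbb{Z}\oplus\mathbb{Z}/m\mathbb{Z}$ with $m\mid n$ (the structure theorem allows at most two invariant factors), the action on generating pairs of $K$ factors through the standard surjection $\operatorname{SL}_{2}(\mathbb{Z})\twoheadrightarrow\operatorname{SL}_{2}(\mathbb{Z}/n\mathbb{Z})$, so Lemma \ref{SLnongen} produces exactly $\varphi(m)$ orbits for each such $K$. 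Summing over all subgroups $K\subseteq G^{*}$ and grouping them by isomorphism type — using that $G^{*}\cong G$ has $S_{n,m}(G^{*})=S_{n,m}(G)$ subgroups isomorphic to $\mathbb{Z}/n\mathbb{Z}\oplus\mathbb{Z}/m\mathbb{Z}$ — gives $\dim\mathbb{C}[D]^{\mathrm{inv}}=\sum_{0<m\mid n}S_{n,m}(G)\varphi(m)$.

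The auxiliary verifications (the two form identities on $\{0\}\oplus G^{*}$, the Milgram sum, the surjectivity of $\operatorname{SL}_{2}(\mathbb{Z})\to\operatorname{SL}_{2}(\mathbb{Z}/n\mathbb{Z})$, and the equality $S_{n,m}(G^{*})=S_{n,m}(G)$) are all routine. The step requiring genuine care, and the main modest obstacle, is the reduction in the second paragraph: one must confirm that $\{0\}\oplus G^{*}$ is simultaneously a transversal for $D/H^{\perp}$ and $D/H$ and is carried into itself by the entire $\operatorname{SL}_{2}(\mathbb{Z})$-action, so that the cocycle $Q(M,v)$ of Theorem \ref{genact} is literally zero and no residual roots of unity survive. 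It is exactly this that turns $\rho_{U_{G}}$ into a bona fide permutation representation and lets the orbit count apply verbatim; I do not expect any further conceptual difficulty.
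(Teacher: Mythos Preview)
Your proof is correct and follows essentially the same route as the paper: both take $H=G$ (or $G\oplus\{0\}$) as the self-dual isotropic subgroup, use $G^{*}$ as the transversal, observe via Theorem~\ref{genact} that $Q(M,v)\equiv0$ on $(G^{*})^{2}$ so the action permutes the basis, and then count $\operatorname{SL}_{2}$-orbits on $(G^{*})^{2}$ stratified by the subgroup generated, invoking Lemma~\ref{SLnongen}. The only cosmetic difference is that the paper reaches the orbit count by applying Frobenius's formula $\dim\mathbb{C}[D]^{\mathrm{inv}}=\frac{1}{|\operatorname{SL}_{2}(\mathbb{Z}/N\mathbb{Z})|}\sum_{M}\operatorname{Tr}\rho_{D}(M)$ and then reading each trace as a fixed-point count, whereas you skip straight to ``invariants of a permutation representation are spanned by orbit sums''; this is a mild streamlining but not a different argument.
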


\begin{proof}
Since $\operatorname{sgn}D=0$ (because $G$ is a self-dual isotropic subgroup of $D:=U_{G}$, and so is $G^{*}$), and $D$ has level $N:=\exp(G)$, it is known (see, e.g., \cite{[Str]} or \cite{[Ze]}) that $\rho_{D}$ is essentially a representation of the finite group $\operatorname{SL}_{2}(\mathbb{Z}/N\mathbb{Z})$, whose cardinality is $\Delta_{N}:=N^{3}\prod_{p|N}\big(1-\frac{1}{p^{2}}\big)$. Relating $\rho_{D}$ with the trivial representation of this finite group via the classical formula of Frobenius thus expresses $\dim\mathbb{C}[D]^{\mathrm{inv}}$ as $\frac{1}{\Delta_{N}}\sum_{M\in\operatorname{SL}_{2}(\mathbb{Z}/N\mathbb{Z})}\operatorname{Tr}\rho_{D}(M)$.

We take a basis for $\mathbb{C}[D]$ as in Lemma \ref{ortho}, where for the set of representatives $\mathfrak{R}$ for $D/H$ (also for $\eta$) we can take $G^{*}$. Theorem \ref{genact} expresses the action of any element of $M\in\operatorname{SL}_{2}(\mathbb{Z}/N\mathbb{Z})$ in this basis, where for every such $v$ in the given basis, $Mv$ also lies there because both parameters are from the subgroup $G^{*}$. Moreover, as $G^{*}$ is also isotropic, the expression from Equation \eqref{QMv} is trivial for every $M$ and $v$. Thus the contribution of the vector $\mathfrak{a}_{v}^{H}$ to $\operatorname{Tr}\rho_{D}(M)$ is 1 when $Mv=v$, and 0 otherwise. This allows us to write $\dim\mathbb{C}[D]^{\mathrm{inv}}=\sum_{v\in(G^{*})^{2}}\frac{|\operatorname{St}(v)|}{\Delta_{N}}$, where $\operatorname{St}(v)$ is the stabilizer of $v$ under the action of $\operatorname{SL}_{2}(\mathbb{Z}/N\mathbb{Z})$ induced from Remark \ref{actindvec}.

We partition the latter sum according to the orbits, in $(G^{*})^{2}$, of the action of $\operatorname{SL}_{2}(\mathbb{Z}/N\mathbb{Z})$. Since all the elements in the same orbit have the same stabilizer, they give the same contribution. But as the Orbit-Stabilizer Theorem determines the size of the orbit of $v$ to be $\frac{\Delta_{N}}{|\operatorname{St}(v)|}$, we deduce that $\dim\mathbb{C}[D]^{\mathrm{inv}}$ is just the number of these orbits.

Now, given $v=\binom{\eta}{\lambda}$ with $\eta$ and $\lambda$ in $G^{*}$, it is easy to check that for every element of the orbit of $v$, the corresponding pair of elements of $G^{*}$ generates the same subgroup as $\eta$ and $\lambda$. We thus partition our orbits according to these subgroups, and since only groups that can be generated by two elements are involved, each such group is isomorphic to $\mathbb{Z}/n\mathbb{Z}\oplus\mathbb{Z}/m\mathbb{Z}$ for $n$ and $m$ with $m|n$ (note that $m$, and also $n$, can be 1 when this group is cyclic). On an orbit associated with such a subgroup of $G$, the group $\operatorname{SL}_{2}(\mathbb{Z}/N\mathbb{Z})$ acts through its quotient $\operatorname{SL}_{2}(\mathbb{Z}/n\mathbb{Z})$ (as $n|N$).

But the fact that the action is on sets of generators yields, via Lemma \ref{SLnongen}, that each such subgroup contributes $\varphi(m)$ orbits. Since for such $n$ and $m$ there are $S_{n,m}(G)$ such subgroups, the total number of orbits is given by the asserted formula. This proves the theorem.
\end{proof}
As a special case, we deduce a quick proof of Lemma 3.2 of \cite{[Ye]} (see also Corollary 4.16 of \cite{[Bi]}). We denote by $\sigma_{0}(N)$ the number of positive divisors of the integer $N$.
\begin{cor}
Let $U_{N}$ denote the discriminant form associated with the lattice $II_{1,1}(N)$ for some integer $N$, where $e$ and $f$ are the $U_{N}$-images of the natural, isotropic generators of $II_{1,1}(N)^{*}$. For every $d|N$, set $\mathfrak{a}_{d}:=\sum_{d|a}\sum_{\frac{N}{d}|b}\mathfrak{e}_{ae+bf}$. Then $\{\mathfrak{a}_{d}\}_{d|N}$ form a basis for $\mathbb{C}[U_{N}]^{\mathrm{inv}}$. \label{hypcyc}
\end{cor}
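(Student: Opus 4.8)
The plan is to first identify the discriminant form $U_N$ with the generalized hyperbolic plane $U_{\mathbb{Z}/N\mathbb{Z}}$ of Theorem \ref{hypplane}: the elements $e$ and $f$ span self-dual isotropic subgroups of $U_N$, with $\frac{e^2}{2}=\frac{f^2}{2}=0$ and $(e,f)=\frac1N$ in $\mathbb{Q}/\mathbb{Z}$, so setting $G=\langle e\rangle\cong\mathbb{Z}/N\mathbb{Z}$ and identifying $\langle f\rangle$ with $G^{*}$ through the pairing exhibits $U_N\cong U_G$. Since $\mathbb{Z}/N\mathbb{Z}$ is cyclic, every subgroup of it is cyclic, so in the notation of Theorem \ref{hypplane} we have $S_{n,m}(\mathbb{Z}/N\mathbb{Z})=0$ for $m>1$ and $S_{n,1}(\mathbb{Z}/N\mathbb{Z})=1$ for each $n\mid N$; hence $\dim\mathbb{C}[U_N]^{\mathrm{inv}}=\sum_{n\mid N}\varphi(1)=\sigma_0(N)$. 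As the divisors $d\mid N$ index exactly $\sigma_0(N)$ vectors $\mathfrak{a}_d$, it will suffice to prove that each $\mathfrak{a}_d$ lies in $\mathbb{C}[U_N]^{\mathrm{inv}}$ and that the $\mathfrak{a}_d$ are linearly independent.

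For invariance I would check the two generators directly. Since $\frac{(ae+bf)^2}{2}=\frac{ab}{N}$ in $\mathbb{Q}/\mathbb{Z}$, the operator $\rho_{U_N}(T)$ multiplies $\mathfrak{e}_{ae+bf}$ by $\mathbf{e}(ab/N)$, and when $d\mid a$ and $\frac Nd\mid b$ this exponent is an integer, so $\rho_{U_N}(T)\mathfrak{a}_d=\mathfrak{a}_d$. For $S$ one unfolds Equation \eqref{Weildef} with $|U_N|=N^{2}$ and $\operatorname{sgn}U_N=0$: the coefficient of $\mathfrak{e}_{xe+yf}$ in $\rho_{U_N}(S)\mathfrak{a}_d$ is $\frac1N\sum_{d\mid a,\ (N/d)\mid b}\mathbf{e}\big(-\tfrac{ay+bx}{N}\big)$, which factors as $\big(\tfrac1N\sum_{d\mid a}\mathbf{e}(-ay/N)\big)\big(\sum_{(N/d)\mid b}\mathbf{e}(-bx/N)\big)$; the first geometric sum equals $1$ if $\frac Nd\mid y$ and $0$ otherwise, the second equals $d$ if $d\mid x$ and $0$ otherwise, so the product collapses and $\rho_{U_N}(S)\mathfrak{a}_d=\mathfrak{a}_d$. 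Because $T$ and $S$ generate and $\operatorname{sgn}U_N$ is even (so $\rho_{U_N}$ factors through $\operatorname{SL}_2(\mathbb{Z})$), this gives $\mathfrak{a}_d\in\mathbb{C}[U_N]^{\mathrm{inv}}$.

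For linear independence it is enough to look at the components along the line $b=0$: in $\mathfrak{a}_d$ the coefficient of $\mathfrak{e}_{ke}$ is $1$ when $d\mid k$ and $0$ otherwise, so a relation $\sum_{d\mid N}c_d\mathfrak{a}_d=0$ forces $\sum_{d\mid\gcd(k,N)}c_d=0$ for every $k$, i.e. $\sum_{d\mid g}c_d=0$ for all $g\mid N$, and Möbius inversion then yields $c_d=0$ for all $d$. Hence the $\sigma_0(N)$ vectors $\mathfrak{a}_d$ are linearly independent elements of the $\sigma_0(N)$-dimensional space $\mathbb{C}[U_N]^{\mathrm{inv}}$, and therefore a basis.

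There is no genuine obstacle here; the computations reduce to short geometric sums. The only point needing a little care is the bookkeeping for the identification $U_N\cong U_{\mathbb{Z}/N\mathbb{Z}}$ and extracting the value $\sigma_0(N)$ from the general formula of Theorem \ref{hypplane} — once that dimension is available, invariance together with linear independence concludes the argument.
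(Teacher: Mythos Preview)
Your proof is correct. The dimension count via Theorem \ref{hypplane} and the linear-independence argument are essentially the same as in the paper (the paper phrases the latter as ``take the minimal $d$ with $c_d\neq 0$ and look at the coefficient of $\mathfrak{e}_{de}$'', which is M\"obius inversion in disguise). The one genuine difference is in establishing invariance: the paper observes that each $\mathfrak{a}_d$ equals $\sqrt{N}\,\mathfrak{a}_{0,0}^{H_d}$ for the self-dual isotropic subgroup $H_d=\langle de,\tfrac{N}{d}f\rangle$ and then invokes Corollary \ref{arrow} (the $\uparrow_{H_d}$ operator) to obtain invariance for free, whereas you verify $T$- and $S$-invariance by a direct character-sum computation. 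Your route is more elementary and self-contained; the paper's identification with $\uparrow_{H_d}$ has the advantage of tying the basis explicitly to the general structural result (from \cite{[NRS]} and \cite{[Bi]}) that $\mathbb{C}[D]^{\mathrm{inv}}$ is spanned by the images of self-dual isotropic subgroups, and simultaneously exhibits the complete list of such subgroups of $U_N$.

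One small slip in your $S$-computation: the factor $\tfrac{1}{N}\sum_{d\mid a}\mathbf{e}(-ay/N)$ equals $\tfrac{1}{d}$ (not $1$) when $\tfrac{N}{d}\mid y$, and the second sum equals $d$ when $d\mid x$; their product is indeed $1$, so the conclusion $\rho_{U_N}(S)\mathfrak{a}_d=\mathfrak{a}_d$ stands.
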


\begin{proof}
The discriminant form $D:=U_{N}$ is generalized hyperbolic plane associated with a cyclic group $G$ of order $N$. We shall denote by $e$ a generator of $G$, and by $f$ a generator of $G^{*}$. Applying Theorem \ref{hypplane}, we have $S_{n,m}=1$ when $m=1$ and $n|N$ and 0 otherwise, implying that $\dim\mathbb{C}[D]^{\mathrm{inv}}=\sigma_{0}(N)$.

Now, Corollary \ref{arrow} shows that for every self-dual isotropic subgroup $H$ of $D$, which is of size $N$, the image of the trivial representation under $\uparrow_{H}$ gives an 1-dimensional subspace of $\mathbb{C}[D]^{\mathrm{inv}}$. For $d|N$ we denote by $H_{d}$ the subgroup generated by $de$ and $\frac{N}{d}f$, and it is easy to verify that $\{H_{d}\}_{d|N}$ are precisely the self-dual isotropic subgroup of $D$. Moreover, It is clear that $\mathfrak{a}_{d}$ is $\sqrt{N}$ times $\mathfrak{a}_{0,0}^{H_{d}}$, which thus generates the image of $\uparrow_{H_{d}}$, and there are $\sigma_{0}(N)$ such vectors.

But given a divisor $D|N$, the coefficient in front of $\mathfrak{e}_{D}$ in a linear combination of the sort $\sum_{d|N}c_{d}\mathfrak{a}_{d}$ is just $\sum_{d|D}c_{d}$, so that if $d$ is the minimal divisor of $N$ with $c_{d}\neq0$ then $\mathfrak{e}_{d}$ appears with a non-zero coefficient. It follows that $\{\mathfrak{a}_{d}\}_{d|N}$ are $\sigma_{0}(N)$ linearly independent vectors in a $\sigma_{0}(N)$-dimensional space, which thus form a basis. This proves the corollary.
\end{proof}

\smallskip

In contrast with Corollary \ref{hypcyc}, when $G$ is not cyclic, the images arising from self-dual are no longer linearly independent. Section 4.3 of \cite{[Bi]} examines the case where $G \cong G_{N,M}=(\mathbb{Z}/N\mathbb{Z})\times(\mathbb{Z}/M\mathbb{Z})$ where $M$ divides $N$ is generated by two elements (then $U_{G}$ is $D_{N,M}$ in the notation of \cite{[Bi]}). To illustrate the type of expressions resulting in this case, we define $\psi(n):=n\prod_{p|n}\big(1+\frac{1}{p}\big)$. Then, since $S_{k,1}(G_{k,d})$ equals $\frac{\psi(d)}{\psi(k/d)}$ wherever $d$ divides $k$, and an subgroup counted in $X_{n,m}(G_{N,M})$ is contained in $G[n]$ (the subgroup of $G$ annihilated by $n$) but must contain $G[m]$, we deduce that $S_{n,m}(G_{N,M})=\frac{\psi(n/m)}{\psi(n/\gcd\{n,M\})}$. Thus Theorem \ref{hypplane} determines $\dim\mathbb{C}[D_{N,M}]$ as \[\sum_{m|M}\sum_{d|\frac{M}{m}}\sum_{\substack{k|\frac{N}{md} \\ \gcd\{k,\frac{M}{md}\}=1}}\frac{\psi(kd)\varphi(m)}{\psi(k)}=\sum_{t|M}\sum_{\substack{k|\frac{N}{t} \\ \gcd\{k,\frac{M}{t}\}=1}}\sum_{d|t}\frac{\psi(kd)\varphi(t/d)}{\psi(k)}.\] In the prime power case, with $N=p^{r}$ and $M=p^{s}$ for $r \geq s$, the latter number becomes \[(r+1-s)(s+1)p^{s}-(r-1-s)sp^{s-1}.\] On the other hand, if $M=p$ is prime and $N_{p}$ is the maximal divisor of $N$ that is prime to $p$ then it equals $(2p-1)\sigma_{0}(N/p)+2\sigma_{0}(N_{p})$ (it also produces $\sigma_{0}(N)$ when $M=1$, as in Corollary \ref{hypcyc}).

Section 4.3 of \cite{[Bi]} also indicates a few constructions, which may serve to count the number of self-dual isotropic subgroups of $D_{N,M}$. We only remark that for $M=p$ a prime, there are $\sigma_{0}(N/p)$ isotropic subgroups $H$ of $D_{N}$ with the property that $K^{\perp}/K \cong D_{p}$, and each one of them is the kernel of a surjective projection onto $D_{p}$ of $2p-2$ self-dual isotropic subgroups of $D_{N,p}$. Since there are also $2\sigma_{0}(N)=2\sigma_{0}(N_{p})+2\sigma_{0}(N/p)$ products of a self-dual isotropic subgroup of $D_{N}$ with one of $D_{p}$, the total number of such subgroups of $D_{N,p}$ is $2p\sigma_{0}(N/p)+2\sigma_{0}(N_{p})$. The $\sigma_{0}(N/p)$ resulting linear relations are in one-to-one correspondence with the groups $K$ mentioned above (the case $N=M=p$, with one relation among $2p+2$ groups, appears in Proposition 4.25 of \cite{[Bi]}). It would be interesting to investigate the combinatorics of the relations arising from more complicated groups $G$.

\smallskip

We now determine the dimension of the space of invariants for another family of discriminant forms.
\begin{thm}
Assume that $D$ is a vector space over $\mathbb{F}_{p}$, and that if $p=2$ then its index is $II$. Let $p^{d}$ be the cardinality of a maximal isotropic subgroup $H$ of $D$, and denote the size of the anisotropic discriminant form $A:=H^{\perp}/H$ by $p^{r}$. Then the dimension of $\mathbb{C}[D]^{\mathrm{inv}}$ is $p^{r}\frac{(p^{d}-1)(p^{d-1}-1)}{p^{2}-1}+\frac{p^{d}-1}{p-1}+\delta_{r,0}$.
\label{Fpvs}
\end{thm}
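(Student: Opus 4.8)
The plan is to combine the Frobenius formula for $\dim\mathbb{C}[D]^{\mathrm{inv}}$ with the iterated arrow operator decomposition of $\mathbb{C}[D]$ furnished by Corollary \ref{arrow} and Theorem \ref{arrowJperp}. Since $D$ is an $\mathbb{F}_{p}$-vector space (of index $II$ when $p=2$) its signature is even, so $\rho_{D}$ is a representation of $\operatorname{SL}_{2}(\mathbb{Z})$, and as its level is $p$ it factors essentially through $\operatorname{SL}_{2}(\mathbb{F}_{p})$, as in the proof of Theorem \ref{hypplane}; the same holds for $\rho_{D'}$ for any quotient $D'$ of $D$ by an isotropic subgroup. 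I would fix a flag $0=J_{0}\subset J_{1}\subset\cdots\subset J_{d}=H$ of isotropic subgroups with $|J_{i}|=p^{i}$, set $D^{(i)}:=J_{i}^{\perp}/J_{i}$ (an $\mathbb{F}_{p}$-space of dimension $2(d-i)+r$ whose maximal isotropic subgroup $H/J_{i}$ has the same anisotropic part $A$), and telescope the relation $\uparrow_{J_{i+1}}=\uparrow_{J_{i}}\circ\uparrow_{J_{i+1}/J_{i}}$ of Remark \ref{compsub} with the splittings of Remark \ref{Jperpiso}. This yields an orthogonal decomposition into subrepresentations $\mathbb{C}[D]=\uparrow_{H}\mathbb{C}[A]\oplus\bigoplus_{i=0}^{d-1}\uparrow_{J_{i}}W^{(i)}$, where $W^{(i)}\subseteq\mathbb{C}[D^{(i)}]$ is the orthogonal complement of $\uparrow_{J_{i+1}/J_{i}}\mathbb{C}[D^{(i+1)}]$ described in Theorem \ref{arrowJperp}. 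Since each $\uparrow$ is an isometric embedding of representations, this gives $\dim\mathbb{C}[D]^{\mathrm{inv}}=\dim\mathbb{C}[A]^{\mathrm{inv}}+\sum_{i=0}^{d-1}\dim\bigl(W^{(i)}\bigr)^{\mathrm{inv}}$, and $\dim\mathbb{C}[A]^{\mathrm{inv}}=\delta_{r,0}$ by Remark \ref{condinv}, $A$ being anisotropic.

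It then remains to prove that, writing $k:=d-i$, one has $\dim\bigl(W^{(i)}\bigr)^{\mathrm{inv}}=p^{k+r-2}(p^{k-1}-1)+p^{k-1}$; summing this over $i=0,\dots,d-1$ and adding $\delta_{r,0}$ produces, after a short manipulation of geometric sums, exactly the asserted formula. To compute $\dim\bigl(W^{(i)}\bigr)^{\mathrm{inv}}$ I would apply the Frobenius formula to $W^{(i)}$ using the orthonormal basis $\{\mathfrak{b}_{v}^{H/J_{i},\,J_{i+1}/J_{i}}\}$ of Corollary \ref{basisTl}. By Theorem \ref{arrowJperp} every $\rho_{D^{(i)}}(M)$ permutes these basis vectors up to a root of unity $\tilde c(M,v)$; since $H/J_{i}$ is isotropic, the vector $\xi$ appearing there lies in $H/J_{i}$, so the twist in $M*v$ is invisible to the indexing, leaving the honest action $v\mapsto Mv$ on $D^{(i)}\otimes_{\mathbb{Z}}\mathbb{Z}^{2}$ twisted by $\tilde c$, which is multiplicative in $M$ on every point stabiliser because $\rho_{D^{(i)}}$ is a representation. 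Averaging over $\operatorname{SL}_{2}(\mathbb{F}_{p})$ and collapsing each stabiliser sum by character orthogonality identifies $\dim\bigl(W^{(i)}\bigr)^{\mathrm{inv}}$ with the number of $\operatorname{SL}_{2}(\mathbb{F}_{p})$-orbits of basis vectors on which the stabiliser character is trivial.

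To count those orbits I would use that $v\mapsto\bar v$, the image of $v$ in the square of the character group of $J_{i+1}/J_{i}\cong\mathbb{F}_{p}$, is equivariant with image $\mathbb{F}_{p}^{2}\setminus\{0\}$, on which $\operatorname{SL}_{2}(\mathbb{F}_{p})$ acts transitively with stabiliser $\langle T\rangle\cong\mathbb{Z}/p$; hence orbits of basis vectors correspond to $\langle T\rangle$-orbits on the fibre over $\binom{1}{0}$, each point stabiliser lies in $\langle T\rangle$, and the fibre has $p^{2k+r-2}$ elements. A direct check with Corollary \ref{basisTl} shows that a fibre point $[v]$ is $T$-fixed precisely when its $\lambda$-component lies in $(H/J_{i})^{\perp}$, giving $p^{k+r-1}$ such points; among these the contributing ones are those with $\mathbf{e}(Q(T,v))=\mathbf{e}(\lambda^{2}/2)=1$, and as $\lambda$ ranges over $(H/J_{i})^{\perp}$ this value depends only on the image of $\lambda$ in $A$ and forces that image to be isotropic, hence $0$ by anisotropy of $A$. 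This leaves $p^{k-1}$ contributing $T$-fixed points, so the orbit count is $\frac{p^{2k+r-2}-p^{k+r-1}}{p}+p^{k-1}=p^{k+r-2}(p^{k-1}-1)+p^{k-1}$, as required, and summing over $i$ gives the theorem.

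The step I expect to be the main obstacle is the bookkeeping underlying the previous paragraph: pinning down exactly when $M*v$ is equivalent to $v$ under the several relations of Corollary \ref{basisTl}, and confirming that the root of unity surviving the collapse is precisely $\mathbf{e}(Q(T,v))$ with no stray $8$th-root contribution from the parameter $\varepsilon_{J}(M,v)$ of Theorem \ref{arrowJperp}. This is where the hypothesis that the isotropic subgroup be \emph{maximal} is used essentially: it forces $A=H^{\perp}/H$ to be anisotropic, hence $\varepsilon_{J}(T,v)=1$ by Remark \ref{rootunit} and the final count to collapse to the single value $p^{k-1}$ independently of $r$; and it is also where the index hypothesis when $p=2$ enters, to keep the signature even throughout.
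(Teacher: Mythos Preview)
Your proof is correct and follows essentially the same approach as the paper's: both reduce the computation to the orthogonal complement of an arrow image via Theorem \ref{arrowJperp} and Remark \ref{Jperpiso}, then apply the Frobenius trace formula over $\operatorname{SL}_{2}(\mathbb{F}_{p})$ and reduce to an orbit count (Equation \eqref{dimwithO} in the paper). The differences are organizational. The paper argues by induction on $d$, peeling off one cyclic $J\subseteq H$ at a time, while you unroll this induction into a flag $J_{0}\subset\cdots\subset J_{d}=H$ and a direct-sum decomposition---the resulting inductive step and your summand $\dim(W^{(i)})^{\mathrm{inv}}$ are identical computations. In the orbit count, the paper classifies orbits by the dimension (1 or 2) of the span of $(\bar\eta,\bar\lambda)$ inside $D/H^{\perp}\cong\mathbb{F}_{p}^{d}$, which makes the connection to the Grassmannian counts $S_{p,p}$, $S_{p,1}$, $S_{1,1}$ (mentioned after the theorem) transparent; you instead project equivariantly to $D/(J_{i+1})^{\perp}\cong\mathbb{F}_{p}$ in each coordinate and use the standard bijection between $\operatorname{SL}_{2}(\mathbb{F}_{p})$-orbits upstairs and $\langle T\rangle$-orbits on the fibre over $\binom{1}{0}$. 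Your parametrization is arguably cleaner, and the two give the same numbers.

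One minor misattribution: you write that anisotropy of $A$ is what forces $\varepsilon_{J}(T,v)=1$. In fact $\varepsilon_{J}(T,v)=1$ unconditionally by Proposition \ref{formbHJ} (or Remark \ref{rootunit}); the maximality of $H$ (hence anisotropy of $A$) enters only in the next step, where it ensures that the isotropic $\lambda\in(H/J_{i})^{\perp}$ are exactly those in $H/J_{i}$, collapsing the count of contributing $T$-fixed points to $p^{k-1}$.
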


\begin{proof}
Lemma \ref{invtriv} (or Remark \ref{condinv}) deals with the case where $d=0$ and $r>0$, and if $d=r=0$ then $D$ is trivial and so is $\rho_{D}$. We have thus established the induction basis for working by induction on $d$.

If $d>0$ then $H$ is non-trivial, so we take a cyclic subgroup $J \subseteq H$, and set $B:=J^{\perp}/J$. Then we have an orthogonal decomposition of $\mathbb{C}[D]$ as the direct sum of the sub-representation $\uparrow_{J}\mathbb{C}[B]$ from Corollary \ref{arrow} and its orthogonal complement. The former contributes $\dim\mathbb{C}[B]^{\mathrm{inv}}$, which by the induction hypothesis (with $d-1$ and the same $r$) is $p^{r}\frac{(p^{d-1}-1)(p^{d-2}-1)}{p^{2}-1}+\frac{p^{d-1}-1}{p-1}+\delta_{r,0}$, and for the complement we argue as in the proof of Theorem \ref{hypplane}.

Indeed, the signature is even and the level is $p$ (this is why we need the index to be $II$ when $p=2$), so we view $\rho_{D}$ as a representation of $\operatorname{SL}_{2}(\mathbb{F}_{p})$, of cardinality $p^{3}-p$. Take now a representing set $\mathfrak{V}$ for the vectors $v \in D^{2}\setminus(J^{\perp})^{2}$ modulo the relations from Corollary \ref{basisTl}, and we allow ourselves the abuse of terminology by saying that $M\in\operatorname{SL}_{2}(\mathbb{F}_{p})$ stabilizes a vector $v\in\mathfrak{V}$ is $\mathfrak{b}_{Mv}^{H,J}$ is a multiple of $\mathfrak{b}_{v}^{H,J}$ (or pairs non-trivially with it, which is the same by this corollary). By setting $\delta_{H,J}(M,v)$ to be the constant satisfying $\mathfrak{b}_{Mv}^{H,J}=\mathfrak{b}_{v}^{H,J}$, the argument from the proof of Theorem \ref{hypplane}, but now using Theorem \ref{arrowJperp} and Remark \ref{Jperpiso}, yields \[\dim\big[\big(\uparrow_{J}\mathbb{C}[B]\big)^{\perp}\big]^{\mathrm{inv}}=\sum_{v\in\mathfrak{V}}\sum_{M\in\operatorname{St}(v)}\frac{\delta_{H,J}(M,v)\varepsilon_{J}(M,v)\mathbf{e}\big(Q(M,v)\big)}{p^{3}-p},\] with our modified notion of the stabilizer.

Moreover, a conjugation argument implies that if $M\in\operatorname{St}(v)$, $\alpha$ is the constant such that $\rho_{D}(M)\mathfrak{b}_{v}^{H,J}=\alpha\mathfrak{b}_{v}^{H,J}$, and $N\in\operatorname{SL}_{2}(\mathbb{F}_{p})$, then the element $NMN^{-1}$ stabilizes $Nv$ and we have $\rho_{D}(NMM^{-1})\mathfrak{b}_{Nv}^{H,J}=\alpha\mathfrak{b}_{Nv}^{H,J}$ with the same constant $\alpha$. Therefore we can once again replace $\mathfrak{V}$ by a subset $\mathfrak{O}$ consisting of one representative for each orbit of $\operatorname{SL}_{2}(\mathbb{F}_{p})$ on $\mathfrak{V}$ modulo the relations from Corollary \ref{basisTl}, and obtain
\begin{equation}
\dim\big[\big(\uparrow_{J}\mathbb{C}[B]\big)^{\perp}\big]^{\mathrm{inv}}=\sum_{v\in\mathfrak{O}}\sum_{M\in\operatorname{St}(v)}\frac{\delta_{H,J}(M,v)\varepsilon_{J}(M,v)\mathbf{e}\big(Q(M,v)\big)}{|\operatorname{St}(v)|}. \label{dimwithO}
\end{equation}

Now, the images in $D/H^{\perp}$ of the entries $\eta$ and $\lambda$ of a vector $v\in\mathfrak{O}\subseteq\mathfrak{V}$ span a subspace that is not contained in $J^{\perp}/H^{\perp}$, and can therefore be of dimension 1 or 2. Moreover, this subspace remains invariant under the action of $\operatorname{SL}_{2}(\mathbb{F}_{p})$, and thus for each such subspace we can consider the set of orbit associated with it. Via the action of $\operatorname{SL}_{2}(\mathbb{F}_{p})$ we may assume that in any element $v=\binom{\eta}{\lambda}\in\mathfrak{O}$ we have $\lambda \in J^{\perp}$ and $\eta \not\in J^{\perp}$, so that $\mathfrak{b}_{v}^{H,J}=\mathfrak{a}_{v}^{H}$. Thus Corollary \ref{basisTl} (or Lemma \ref{ortho}) implies that for a representative associated with a subspace of $D/H^{\perp}$, the $\eta$-coordinate only appears in the basis via its $D/H^{\perp}$-image, but there are $p^{r}$ different $\lambda$'s in the basis with the same given $D/H^{\perp}$-image.

Given a representative $v\in\mathfrak{O}$ that is associated with a 2-dimensional subspace of $D/H^{\perp}$, the group $\operatorname{St}(v)$ is trivial, and thus such an orbit contributes 1 to the right hand side of Equation \eqref{dimwithO}. Moreover, as in Lemma \ref{SLnongen}, the group $\operatorname{SL}_{2}(\mathbb{F}_{p})$ acts on the set of bases for such a space with $\varphi(p)=p-1$ orbits. Since there are $\frac{(p^{d}-1)(p^{d-1}-1)}{(p^{2}-1)(p-1)}$ 2-dimensional subspaces of $D/H^{\perp}$, out of which $\frac{(p^{d-1}-1)(p^{d-2}-1)}{(p^{2}-1)(p-1)}$ are contained in $J^{\perp}/H^{\perp}$ and must be excluded, and each subspace is associated with $p^{r}(p-1)$ orbits, these combine to a total of $p^{r}\big[\frac{(p^{d}-1)(p^{d-1}-1)}{p^{2}-1}-\frac{(p^{d-1}-1)(p^{d-2}-1)}{p^{2}-1}\big]$ (this can be simplified to $p^{d+r-2}(p^{d-1}-1)$, but the expanded form is better for merging with the induction hypothesis).

On the other hand, for $v\in\mathfrak{O}$ for which the subspace of $D/H^{\perp}$ is 1-dimensional, we have $\lambda \in H^{\perp}$ and therefore the powers of $T$ stabilize $v$, with $\delta_{H,J}(T^{l},v)=\varepsilon_{J}(T^{l},v)=1$ (see Proposition \ref{formbHJ} and Corollary \ref{basisTl} or Lemma \ref{ortho} once again), and $Q(T^{l},v)=l\frac{\lambda^{2}}{2}$ from Equation \eqref{QMv}. It follows that the contribution of such an orbit is $\frac{1}{p}\sum_{l\in\mathbb{F}_{p}}\mathbf{e}\big(l\frac{\lambda^{2}}{2}\big)$, which is 1 if $\lambda$ is isotropic and 0 otherwise. But since we assumed that $H$ was a maximal isotropic subgroup of $D$, the only contributing elements are those with $\lambda \in H$. Therefore for each one of the $\frac{p^{d}-1}{p-1}$ 1-dimensional subspaces of $D/H^{\perp}$, except for the $\frac{p^{d-1}-1}{p-1}$ subspaces that are contained in $J^{\perp}/H^{\perp}$, we have a contribution of 1 (this difference reduces to $p^{d-1}$, but once again we leave it in the expanded form).

Adding these terms to the formula for $\dim\mathbb{C}[B]^{\mathrm{inv}}$ from the induction hypothesis gives the desired result. This proves the theorem.
\end{proof}
Note that the case $r=0$ in Theorem \ref{Fpvs} is also covered by Theorem \ref{hypplane}, essentially with $G=D/H^{\perp}=D/H$. As the calculations from the proof show, the three terms in Theorem \ref{Fpvs} are $S_{p,p}(G)(p-1)$, $S_{p,1}(G)$, and $S_{1,1}(G)$ respectively. On the other hand, with $r>0$ we only have $2^{-(2d+2)}_{II}$ with $r=2$ when $p=2$, and for odd $p$ the possibilities are $p^{\pm(2d+1)}$ with $r=1$ or $p^{\varepsilon(2d+2)}$ with signature 4 and $r=2$.

Finally, note that when $d=1$ and $r>0$ Theorem \ref{Fpvs} gives $\dim\mathbb{C}[D]^{\mathrm{inv}}=1$, so it is interesting to see what form does a generator of this space takes. It turns out that for $r=2$, i.e., when $D$ is $p^{-4}$ or $2^{-4}_{II}$, this vector has a simple form $\sum_{0\neq\gamma \in D_{\mathrm{iso}}}\mathfrak{e}_{\gamma}-(p-1)\mathfrak{e}_{0}$. For odd primes $p$ and $r=1$, one can show that while the special orthogonal group of $D$ as a quadratic space over $\mathbb{F}_{p}$ operates transitively on the non-zero isotropic vectors in $D$, its connected component (defined by trivial spinor norm) operates with two orbits. Moreover, for such an isotropic element $\gamma$, a multiple $l\gamma$ of $\gamma$ lie in the same orbit as $\gamma$ if and only if $\binom{l}{p}=+1$. Writing these orbits as $\mathcal{O}_{\pm}$ (with an arbitrary choice of signs), the expression $\sum_{\gamma\in\mathcal{O}_{+}}\mathfrak{e}_{\gamma}-\sum_{\gamma\in\mathcal{O}_{-}}\mathfrak{e}_{\gamma}$ spans $\mathbb{C}[D]^{\mathrm{inv}}$ (the invariance under $\rho_{D}(S)$ and the form of intersections with isotropic lines is closely related to the classical Gauss sum for $p$).

For $d>1$, the space $\mathbb{C}[D]^{\mathrm{inv}}$ will contain the 1-dimensional space $\uparrow_{K}\mathbb{C}[C]$ for each the isotropic subgroups $K$ of cardinality $p^{d-1}$ in $D$. It is an interesting question whether these vectors generate $\mathbb{C}[D]^{\mathrm{inv}}$ in this case, much analogously to the case $r=0$ already considered in \cite{[NRS]} and \cite{[Bi]}. We leave the question of which types of vectors generate the space of invariants for more general discriminant forms for future research. Another question, which is also left for future research, is to determine, for some discriminant forms $D$, the subspace of $\mathbb{C}[D]$ on which $\rho_{D}$ operates via a given non-trivial character of $\operatorname{Mp}_{2}(\mathbb{Z})$. The method proving Theorems \ref{hypplane} and \ref{Fpvs} can surely be applied to shed light on this question as well at some instances.

\medskip

\noindent\textsc{Einstein Institute of Mathematics, the Hebrew University of Jerusalem, Edmund Safra Campus, Jerusalem 91904, Israel}

\noindent E-mail address: zemels@math.huji.ac.il

\end{document}